\documentclass[12pt, reqno]{amsart}
\usepackage{amsfonts,amsmath,amsthm,oldgerm,amssymb,amscd}

\usepackage[all,cmtip]{xy}
\usepackage{fancyhdr}
\usepackage{psfrag}
\usepackage{graphicx}
\usepackage{amsbsy}
\usepackage{indentfirst}
\usepackage{float}
\usepackage{latexsym}
\usepackage{graphics}
\usepackage{verbatim}
\usepackage{fixmath}
\bibliographystyle{alpha}

\setlength{\topmargin}{-0.3in}
\setlength{\oddsidemargin}{0.20in}
\setlength{\evensidemargin}{0.20in}
\setlength{\textwidth}{6in}
\setlength{\textheight}{9in}
\setlength{\parskip}{0.15in}

\newcommand{\Q}{{\mathbb Q}}
\newcommand{\R}{{\mathbb R}}
\newcommand{\Z}{{\mathbb Z}}
\newcommand{\C}{{\mathbb C}}
\newcommand{\N}{{\mathbb N}}

\newcommand{\V}{{\mathbb V}}

\newcommand{\G}{{\mathbf G}}

\renewcommand{\O}{{\mathcal O}}

\newcommand{\cE}{{\mathcal E}}
\newcommand{\cB}{{\mathcal B}}

\newtheorem{theorem}{Theorem}
\newtheorem{lemma}{Lemma}
\newtheorem{corollary}{Corollary}
\newtheorem{proposition}[theorem]{Proposition}

\theoremstyle{definition}

\newtheorem*{example}{Example}
\newtheorem{definition}{Definition}
\numberwithin{definition}{section}

\numberwithin{equation}{section}

\theoremstyle{remark}
\newtheorem*{remark}{Remark}
\newtheorem*{acknowledgement}{Acknowledgement}

\begin{document}

\title[Universal Torelli for elliptic surfaces]{A universal Torelli theorem
  \\for 
  elliptic surfaces}

\author{C.~S.~Rajan}

\address{Tata Institute of Fundamental  Research, Homi Bhabha Road,
Bombay - 400 005, INDIA.}  \email{rajan@math.tifr.res.in}

\author{S.~Subramanian}

\address{Tata Institute of Fundamental  Research, Homi Bhabha Road,
Bombay - 400 005, INDIA.}  \email{subramnn@math.tifr.res.in}
\subjclass{Primary 14J27; Secondary 14H99, 11G99}

\begin{abstract} Given two semistable
elliptic surfaces over a curve $C$ defined over a field of
characteristic zero or finitely generated over its prime field,  we
show that any compatible family of effective isometries of the
N{\'e}ron-Severi lattices of the base changed elliptic surfaces for
all finite separable maps $B\to C$ arises from an isomorphism of the
elliptic surfaces. Without the effectivity hypothesis, we show that
the two elliptic surfaces are isomorphic. 

We also determine the group of universal automorphisms of a semistable
elliptic surface. In particular, this includes showing that the
Picard-Lefschetz transformations corresponding to an irreducible
component of a singular fibre, can be extended as universal
isometries. In the process, we get a family of  homomorphisms of the
affine Weyl group associated to $\tilde{A}_{n-1}$ to that of
$\tilde{A}_{dn-1}$, indexed by natural numbers $d$, which are closed
under composition.

\end{abstract}

\maketitle

\section{Introduction}
Let $X$ be a compact, connected, oriented K\"{a}hler manifold of dimension $d$
with an integral K\"ahler form $\omega\in H^2(X, \Z)\cap H^{1,1}(X,
{\Bbb C})$. 
 The intersection
product induces a graded algebra structure on the cohomology algebra
\[ H(X)=\oplus_{i=0}^{2d} H^i(X, {\Bbb Z}), \] where $H^i(X, {\Bbb
Z})$ are the singular cohomology groups of $X$.  Hodge theory provides
a filtration of the complex cohomology groups  $H^i(X,{\Bbb C})$.  The
K\"{a}hler form induces a polarization of the Hodge structure. The
classical Torelli question is whether the space $X$ can be recovered
from the polarized Hodge structure of the cohomology algebra equipped
with the intersection product. 

When $X$ is a compact, connected Riemann surface the Torelli question
has an affirmative answer: the Riemann surface is determined by its
associated polarized Hodge structure.

Now, let $\pi: ~X\to C$ be an elliptic surface over a smooth,
projective curve $C$ over $\C$.  Different aspects of  the  Torelli
problem  have been well studied for elliptic surfaces.  It is known,
for instance, that Torelli does not hold for elliptic surfaces (see
Example \ref{torellifailure}). 

One of the problems that arises  with the Torelli question for
elliptic surfaces is that the  N{\'e}ron-Severi group of the surface
$X$  is not sufficiently large enough to distinguish between the
surfaces. To rectify this problem, we argue  in analogy with Tate's
isogeny conjecture, or with Grothendieck's use of base changes. This
leads us  to base change the elliptic surfaces by finite maps $B\to
C$, so that the  N{\'e}ron-Severi group of the base changed elliptic
surfaces becomes larger. 

We can consider all base changes of the   elliptic surfaces by finite
separable morphisms of the base curve, and a family of compatible
isometries between the  N{\'e}ron-Severi groups of the base changed
surfaces.  For the existence of compatible isometries, we need to work
with  semistable elliptic surfaces.

The resulting object will carry an action of the absolute Galois group
of the generic point of $C$, and the isometries will have to be
equivariant with respect to the action of the Galois group.
Considering the whole family of N\'eron-Severi lattices carries the
risk that the collection of effective isometries of the
N{\'e}ron-Severi lattices can become larger corresponding to the
growth of the  N{\'e}ron-Severi group.   Miraculously, this does not
happen. Working over a field $k$, which is of characteristic zero or
finitely generated over its prime field,  we show that
compatible, effective  isometries of the  N{\'e}ron-Severi lattices of
the  base changes for all base changes of the base curve, arises from
an isomorphism of the elliptic surfaces. The introduction of the
effectivity hypothesis is critical, enlarging the scope of the
theorem, making it more natural and compelling, and follows the use of
the effectivity hypothesis  for the Torelli theorem for $K3$-surfaces
proved  by   Piatetskii-Shapiro and Shafarevich. 

We next consider describing  the group of universal isometries of a
semistable elliptic surface dropping the effectivity assumption on the
isometries. We observe that   the Picard-Lefschetz transformations
based on the irreducible components of the singular fibres can be
extended to give compatible, isometries of the   N{\'e}ron-Severi
lattices of the  base changes  for all base changes of the base
curve. The Picard-Lefschetz reflections of a singular fibre of type
$I_n$ generates the affine Weyl group of $\tilde{A}_{n-1}$. 
The universality of Picard-Lefschetz reflections defines a 
family of representations of the affine Weyl group
$\tilde{A}_{n-1}$ to $\tilde{A}_{d(n-1)}$ for any natural number $d$
which are closed under composition.  Such maps correspond to the base
change by a map of degree $d$ of the base curve, totally ramified of
degree $d$ at the point corresponding to the singular fibre. The study
of these representations allow us to determine the group of universal
automorphisms of the  N{\'e}ron-Severi lattices attached to base
changes of a semistable elliptic surface. 

\subsubsection{Outline of the paper.}  We now give an outline of the
paper. In section \ref{sec:theorems}, we introduce the notion of
universal N{\'e}ron-Severi groups and state the main theorems. In
section \ref{sec:fibral}, using base changes,  we first show that
universal isometries preserve fibres. We then show that univeral
isometry determines the Kodaira-N\'eron type of the singular
fibres. In section \ref{sec:isogeny},  appealing  to theorems proving
the Tate isogeny conjecture, we establish that the generic fibres are
isogenous. Using the extra information coming from our hypothesis, it
is shown  in section \ref{sec:utt},  that the elliptic surfaces are
indeed isomorphic.   

Using the effectivity hypothesis, one concludes that sections are
mapped to sections and the irreducible components of singular fibres
are preserved by the universal isometry (see section
\ref{sec:effectivity}). The fact that the elliptic surfaces are
isomorphic allows us to compose the universal isometry with
itself. Making use of the fact that torsion elements are determined by
their intersection with the components of singular fibres, allows us
to conclude in section \ref{sec:fibral2},  that the action of the
universal isometry on torsion and the fibral divisors is geometric. 
The final proof of the effective universal Torelli theorem carried out
in section \ref{sec:uett} rests on the use
of  a Galois theoretic argument together with the geometry and
arithmetic around the narrow Mordell-Weil group of the generic fibre. 

The second half of the paper studies the representation theoretic and
geometric aspects of the  Picard-Lefschetz reflections based on the
irreducible components of the singular fibres. We first determine the
base change map on fibral divisors in section \ref{sec:basechange}. 
This description allows us to arrive in an inductive manner the 
definition of the lifts of Picard-Lefschetz reflections to universal
isometries. However it is more convenient to represent these
reflections in terms of usual permutation notation, allowing us to
come up with an alternate definition of the universal
Picard-Lefschetz isometries. These facts and the reprsentation
theoretic aspects of the affine Weyl group of type $\tilde{A}$ that
arise are studied in section \ref{sec:repn}. 

In section \ref{sec:upl}, we show that the lifts of Picard-Lefschetz
reflections we have defined indeed define isometries of the universal
N\'eron-Severi group. In the last section \ref{sec:guti}, we
determine the group of isometries of the universal
N\'eron-Severi group of a semistable elliptic surface. 

\section{Elliptic surfaces and the main theorems}\label{sec:theorems}
For a variety $Z$ defined over $k$, let $\bar{Z}=Z\times_k \bar{k}$, 
where $\bar{k}$ is a fixed separable
closure of $k$. If $f:Z\to Y$ is a morphism of schemes defined over
$k$, let $\bar{f}: \bar{Z}\to \bar{Y}$ denote the base change of the
morphism $f$ to $\bar{k}$. The structure sheaf of a variety $X$ is
denoted by ${\mathcal O}_X$. 

Let $C$ be a connected,  smooth, projective curve over a field $k$. An elliptic
surface  $\cE: X\xrightarrow{\pi} C$ is a non-singular, projective surface $X$
defined over $k$ together with a surjective morphism 
$\pi: X\to C$ such that the following conditions are satisfied: 

\begin{itemize}\label{assumptions}

\item The generic fibre $E$ over the function field
$K=k(C)$ of $C$ is a smooth, irreducible curve  of
genus $1$. 

\item The map $\pi:X\to C$ has a section. 

\item The curve $C$ has a $k$-rational point. 

\item   $\bar{\pi}$ is relatively minimal, i.e., there is no irreducible,
  rational curve $D$ on $\bar{X}$ 
with self-intersection $D^2=-1$ contained in a
  fibre of $\bar{\pi}$. 

\item  The $j$-invariant of the generic fibre is not algebraic over
  $k$. Equivalently the elliptic surface is not potentially
  iso-trivial. 

\end{itemize}

We refer to the excellent surveys (\cite{M, SS}) and (\cite[Chapter
III]{Si2}) for information about
elliptic surfaces (primarily 
over algebraically closed ground fields).  

\subsection{Base change}
Let $\cB_C$ be the collection of triples $(l, B, b)$ consisting of the
following
\begin{itemize}
\item $l$ is a finite separable extension of $k$ contained inside $\bar{k}$. 

\item $B$ is a geometrically integral, 
regular projective curve defined over $l$.

\item $b:B\to C\times_k {\rm Spec} (l)$ is a finite, separable
  morphism. 
\end{itemize}
When the situation is clear, we drop the use of the subscript $C$, and
also simply refer the morphism $b \in \cB$. 

For $b\in \cB_C$, 
let $X_b$ be the relatively minimal regular model in the birational
equivalence class of the base
change surface $X^b:=(X\times_k {\rm Spec}
(l))\times_{C\times_k {\rm Spec} (l)} B\to B$.  The elliptic surface
$\cE_b: X_b\to B$ can be considered as the unique relatively minimal regular
elliptic surface over $B$ with generic fibre the curve $E$
considered as an elliptic curve over $l(B)$. 

\subsection{Semistable elliptic surfaces}
For a place $t$ of $k(C)$, let ${\mathcal O}_{C,t}$ denote the local ring
at $t$. The elliptic surface defines an elliptic curve $E_t$ over
${\mathcal O}_{C,t}$. 
Define an elliptic surface $\cE$ to have {\em semistable} reduction at
$t$, if the elliptic curve $E_t$ has either good or split multiplicative
reduction modulo the maximal ideal in  ${\mathcal O}_{C,t}$. 

This amounts to saying that the fibre at $t$, is either an elliptic
curve, or is  of type $I_n, ~n\geq 1$ in the Kodaira-N\'{e}ron
classification of singular fibres. At a place having singular
reduction of type $I_n, n\geq 3$,  the special fibre $X_t$ of $X$ at
$t$, is a reduced cycle consisting of $n$ smooth, rational curves,
with self-intersection $-2$, and 
each curve intersecting its neighbours with multiplicity one.  

Define an  elliptic surface $\cE: X\to C$ to be {\em semistable}, if
it has semistable reduction at all places $v$ of $C$. 

{\em Notation.} The ramification locus of $\pi$ is usually denoted by
$S$. For $t\in S\subset C(k)$, the  Kodaira-N\'{e}ron type of the singular fibre
is denoted by $I_{n_t}$. The irreducible components of the fibre
at $t$ are denoted by $v^t_i, ~i\in \Z/n_t\Z$, where the component
$v^t_0$ is the component intersecting the zero section.  If $n_t\geq 3$, then
\[(v^t_i)^2=-2, \quad v^t_iv^t_{i+1}=1 \quad \mbox{for} \quad i\in \Z/n_t\Z.\]
At times the superscript $t$ is dropped. When base changes are
involved, $w$ is used instead of $v$ to denote 
the components of the base changed surface.

The following well known
theorems ensuring the existence of semistable base change
and properties of semistable surfaces under further base change are
crucial to the formulation and proof of the results of this paper: 

\begin{theorem}\label{ssbc}
(i) Given an elliptic
surface $X\to C$, there is a triple  $(l, B, b)\in \cB$,  such that
the base changed surface $\cE_b: X_b\to B$ is semistable. 

(ii) Suppose $X\to C$ is a semistable elliptic surface, and  $(l, B, b)\in
\cB$. Then $X_b$ is the minimal desingularization of   
$X^b:=(X\times_k {\rm Spec}(l))\times_{C\times_k {\rm Spec} (l)}
B$. The surface $\cE_b: X_b\to B$ is semistable and there is a finite, proper map $p_b: X_b \to X$, 
compatible with the map $b: B\to C$. 
\end{theorem}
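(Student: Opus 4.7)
The plan is to treat the two parts essentially separately. For part (i), I would reduce the statement to the classical semistable reduction theorem for elliptic curves applied to the generic fibre $E$, regarded as an elliptic curve over the function field $K=k(C)$. By semistable reduction (which in this setting follows from Tate's algorithm together with the Kodaira--N\'eron classification of singular fibres), there is a finite separable extension $L$ of $K$ over which $E$ acquires semistable reduction at every place. Writing $L = l(B)$ for a geometrically integral regular projective curve $B$ over a finite separable extension $l$ of $k$ produces a triple $(l,B,b)\in \cB_C$, and the relatively minimal regular model of $E_L$ over $B$ is then the required semistable elliptic surface $\cE_b : X_b\to B$ (by definition of semistability). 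Concretely, after extending $l/k$ so that all components of all non-semistable fibres of $X$ are rational, one chooses $b : B\to C\times_k {\rm Spec}(l)$ to be cyclically ramified of the appropriate order ($2, 3, 4$, or $6$ according to Kodaira type) at each non-semistable place, which converts each of the types $II, III, IV, I_n^\ast, II^\ast, III^\ast, IV^\ast$ into a semistable type.

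For part (ii), the assertion is local on $B$ and \'etale-local on $X$, so I fix a point $t\in C$ and a point $s\in B$ lying above $t$ with ramification index $e$. Away from the singular locus of $\pi$, or when $b$ is \'etale at $s$, the fibre product $X^b$ is regular and the fibre is either smooth or still of type $I_n$, so there is nothing to check. At a node $p$ of an $I_n$-fibre of $X$, the regularity of $X$ forces the completed local ring to take the form $\O_{C,t}^{\wedge}[[x,y]]/(xy - \pi)$ for a uniformizer $\pi$ at $t$; writing $\pi = u^e$ in $\O_{B,s}^{\wedge}$ converts this into $xy = u^e$, a rational double point of type $A_{e-1}$ on $X^b$. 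The minimal resolution inserts a chain of $e-1$ smooth rational $(-2)$-curves between the proper transforms of the two components through $p$. Summing over the $n$ nodes of the $I_n$-fibre replaces it by a cycle of $ne$ smooth rational $(-2)$-curves, i.e.\ a fibre of type $I_{ne}$. This simultaneously shows that $\cE_b$ is semistable and identifies $X_b$ with the minimal desingularization of $X^b$, since rational double points admit canonical minimal resolutions whose exceptional loci are chains of $(-2)$-curves, and the resulting surface is already relatively minimal over $B$.

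The map $p_b : X_b\to X$ is then assembled as the composition of the minimal resolution $X_b \to X^b$, the fibre-product projection $X^b \to X\times_k {\rm Spec}(l)$ (which is finite as the base change of the finite map $b$), and the structural projection $X\times_k {\rm Spec}(l)\to X$ (finite because $l/k$ is finite). The composition is proper and surjective and evidently makes the square with $b : B\to C$ commute. The main obstacle in this whole argument is the local analysis of $X^b$ above a node of an $I_n$-fibre and the verification that the minimal resolution inserts exactly $e-1$ new $(-2)$-curves there to yield the Kodaira fibre $I_{ne}$; once this local picture is in place, the rest is a formal combination of the semistable reduction theorem, the Kodaira--N\'eron classification, and the standard functoriality of fibre products and minimal resolutions.
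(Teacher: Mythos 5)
Your proposal reconstructs, in detail, the argument the paper delegates almost entirely to Liu's book: part (i) is classical semistable reduction for the generic elliptic curve, and part (ii) is the local computation that base change by ramification degree $e$ turns each node of an $I_n$-fibre into an $A_{e-1}$-singularity $xy = u^e$ on $X^b$, whose minimal resolution inserts a chain of $e-1$ rational $(-2)$-curves, yielding $I_{ne}$. This matches exactly the observation the paper makes after citing \cite{Liu} (the paper writes the local equation $xy = z^d$ and notes the resolution takes $[d/2]$ blowups; your count of $e-1$ exceptional curves is the same fact). So the approach is the same, just with the citation unpacked.

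Two small remarks. First, the paper's notion of semistability requires \emph{split} multiplicative reduction. Your part (i) applies semistable reduction and then notes that extending $l$ makes all fibre components rational, but you should say explicitly that a further finite extension of the constant field (or a quadratic twist of the base) is needed to guarantee the multiplicative fibres are split, i.e., that the Galois action on the cycle of components of each $I_n$-fibre is trivial; otherwise the resulting surface need not be semistable in the sense defined in Section~\ref{sec:theorems}. Second, you carefully state that $p_b$ is \emph{proper and surjective}, which is correct; the paper's statement that $p_b$ is \emph{finite} is imprecise, since $p_b$ contracts the exceptional curves of the resolution $X_b\to X^b$ to points and is therefore only generically finite. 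Your phrasing is actually more accurate than the statement of the theorem on this point, and is the property that is actually used later (the projection formula requires only properness).
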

For the proof see (\cite[Chapter 10]{Liu}). We make the following
observation about Part (ii). Let 
$b\in \cB$, and $w$ be a place of $B$.  Suppose $t'$ maps to $t$, and
the local ring ${\mathcal O}_{B,t'}$ has ramification  degree $d$ over 
${\mathcal O}_{C,t}$. The base changed surface $X^b$ is normal with
$A_d$-singularities at the points on the special fibre at $t'$ which
maps to the singular points of the fibre of $X$ at $t$. The completed
local ring at this singularity is of the form ${\mathcal
  O}_{t'}[[x,y]]/(xy-z^d)$, and the singularity is resolved with
$[d/2]$-blowups 
(\cite[Chapter 10, Lemma 3.21]{Liu}, \cite[Section 2.1.7]{HN}). The surface
$X_b$, the minimal regular model, is the minimal desingularization of
the base changed surface $X^b$.  
In particular, this yields a morphism
$X_b \to X$, compatible with the map $b: B\to C$. 

Suppose $L$ is Galois over $K=k(C)$. The Galois group $G(L/K)$ sits inside
a short exact sequence of the form, 
\begin{equation}\label{seqofgalois}
1\to G(L/lK)\to G(L/K)\to G(l/k)\to 1,
\end{equation}
where the Galois group $G(L/lK)$ can be identified with the automorphism
group $Aut(B/C\times_k {\rm Spec} (l))$. Since $X_b$ is the minimal
desingularization of $X^b$, the action of $G(L/K)$ on
$X^b$ extends to  yield an action of
$G(L/K)$ on $X_b$.

\subsection{ N{\'e}ron-Severi group}
The N{\'e}ron-Severi group of $\bar{X}$, is the group of  divisors on
$\bar{X}$  taken modulo algebraic equivalence.
The  N{\'e}ron-Severi group $NS(X)$ of $X$
is defined to be
the image of ${\rm Pic}_{X/k}(k)$ in $NS(\bar{X})$,
where ${\rm  Pic}_{X/k}$ is the Picard group scheme of $X$ over $k$. 
 The
intersection product of divisors on $\bar{X}$ induces a bilinear
pairing  $<.,.>$ on $NS(\bar{X})$, and hence on $NS(X)$. 
 The intersection product  
of two divisors $D_1, ~D_2$ will  be  denoted by $<D_1,D_2>$ or just $D_1.D_2$.

Suppose $X\to C$ is a semistable elliptic surface, and  $(l, B, b)\in
\cB$. By Part (ii) of Theorem \ref{ssbc}, we obtain  a well-defined
pullback map $p_b^*:~NS(X)\to NS(X_b)$
of the  N{\'e}ron-Severi lattices, satisfying
\begin{equation}\label{pullbackns}
< p_b^*x, p_b^*y>= {\rm deg} (b)<x,y> \quad x, y\in NS( X), 
\end{equation}
where ${\rm deg} (b)$ is the geometric degree of the maps $b$ and  $p_b$.

\subsection{Universal N\'eron-Severi group}
We consider the N\'eron-Severi group of an elliptic surface $X\to C$,
functorially with regard to arbitrary base changes given by finite, separable
maps $B\to C$. The aim is to show that an effective natural transformation
between two such N\'eron-Severi functors, arises from an isomorphism of
the elliptic surfaces.

\begin{definition}
Let $\cE: X\xrightarrow{\pi} C$ be a semistable  elliptic surface defined over a
field $k$. Define the {\em universal N\'eron-Severi group} $UNS(\cE)$ of 
$ \cE$ to be the collection of $(NS(X_b),
<.,.>)$, where $b\in \cB_C$, equipped with the pull back maps
\[ p_a^*: NS(X_b)\to NS(X_{b\circ a}),\]
for any pair of finite morphisms $A\xrightarrow{a} B\xrightarrow{b}
C, ~a\in \cB_B$. 
\end{definition}

\begin{definition}
A {\em (universal) isometry} $\Phi: UNS(\cE)\to UNS(\cE')$ between 
universal N\'eron-Severi groups of  two  semistable elliptic surfaces 
$\cE: X\xrightarrow{\pi} C$ and $\cE': X'\xrightarrow{\pi'} C$ is defined 
to be a collection of
isometries
\[\phi_b: NS(X_b)\to NS(X'_b), \]
indexed by $b\in \cB_C$, such that for any sequence of
finite maps $A\xrightarrow{a} B\xrightarrow{b} C$ with $b\circ a\in
\cB$,  $\phi_{b\circ a}\circ p_a^*=p_a^*\circ \phi_b$, i.e., the following
diagram is commutative: 
\[\begin{CD}
NS(X_{b\circ a}) @>\phi_{b\circ a}>>  NS(X_{b\circ a})\\
@Ap_a^*AA @Ap_a^*AA\\
NS(X_b)  @>\phi_{b}>> NS(X_{b})
\end{CD}
\]
\end{definition}
We also denote $\phi_b$ simply by $\Phi$ if the context is clear.  

\subsubsection{Effective isometries}
We recall that a prime divisor on $X$ (\cite[Chapter II, Section
6]{H}) is a closed integral subscheme of codimension one. A divisor
$D$ on $X$ is said to be effective, if it can be written as a finite,
non-negative integral linear combination of prime divisors. 
Given two elliptic surfaces $X, X'$ over $C$, a map
$\phi: NS(X)\to NS(X')$ is said to be {\em effective}, if it takes the
class of an effective 
divisor on $X$ to the class of  an effective divisor on $X'$.

A (universal) isometry $\Phi$ of universal  N\'eron-Severi 
groups of two semistable elliptic
surfaces is said to be {\em
  effective} if for any $b\in \cB_C$, the isometry 
$\phi_b$ is effective, i.e., it takes the cone of effective divisors
in $NS(X_b)$ to the cone of effective divisors in $NS(X'_b)$. 

\subsection{An effective universal Torelli theorem}
Suppose $\cE: X\xrightarrow{\pi} C$ and $\cE' : X'\xrightarrow{\pi'} C$ are two
semistable elliptic surfaces. By an isomorphism $\theta: \cE \to \cE'$
of the elliptic surfaces, we mean
an isomorphism $\theta: X \to X'$ compatible with
the projections, i.e., $\pi'\circ \theta=\pi$. It is clear that for any 
$b \in \cB_C$, $\theta$ induces an effective isometry 
\[ \theta_b: NS(X_b)\to NS(X'_b).\]
Then $\theta$ induces a universal  effective isometry
$\Theta:  UNS(\cE)\to UNS(\cE')$. 

Our main theorem is the converse, that an effective Torelli holds in
totality considering all base changes 
for elliptic surfaces:

\begin{theorem}\label{uett}
Let $k$ be a field of characteristic zero or finitely generated over
its prime field, and 
$\cE: X\xrightarrow{\pi} C, ~\cE': X'\xrightarrow{\pi'} C$ be
semistable elliptic
surfaces over $k$.

Suppose $\Phi: UNS(\cE)\to UNS(\cE')$ is an 
effective universal isometry between 
universal N\'eron-Severi datum attached to $\cE$ and $\cE'$ as defined
above.  

Then $\Phi$ arises from an unique
isomorphism $\theta: \cE \to \cE'$ between the elliptic surfaces. 

The isomorphism $\theta$ is defined over $k$ if $k$ is finitely generated over
its prime field, and over a quadratic extension $l$ of $k$ in
case $k$ is an arbitrary field of characteristic zero. If we assume
further that the
Kodaira types of the singular fibres of $X\to C$ are of type $I_n$
with $n\geq 3$, then the isomorphism $\theta$ can be  defined over $k$.  
\end{theorem}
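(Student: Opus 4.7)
The plan is to extract geometric information from the universal isometry $\Phi$ in stages, working from the coarsest invariants (the fibration structure) down to the finest (the specific isomorphism), and using the effectivity hypothesis at the point where numerical data is no longer sufficient. Uniqueness of $\theta$ will fall out automatically once it is constructed, since two morphisms between projective surfaces agreeing on a dense set of divisor classes must coincide.

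First I would show, using sufficiently ramified base changes $B\to C$, that $\phi_b$ must preserve the class of a fibre for every $b$: after enough ramification the fibre classes become distinguishable from all other classes by their intersection numbers with the enlarged lattice of components of singular fibres, forcing the fibration structure on $X'$ to be detected numerically. This is the content of the fibral preservation step (section~\ref{sec:fibral}), which also pins down the Kodaira--N\'eron type $I_{n_t}$ of each singular fibre. Once fibres go to fibres, the generic fibre $E'$ of $\cE'$ is isogenous over $K=k(C)$ to the generic fibre $E$ of $\cE$ by an application of Tate's isogeny theorem (section~\ref{sec:isogeny}); over the fields allowed in the hypothesis, Tate's conjecture is known. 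The compatibility of $\Phi$ under all base changes then bootstraps this isogeny to an actual isomorphism of generic fibres, and hence an isomorphism $\psi:\cE\to\cE'$ of elliptic surfaces (section~\ref{sec:utt}), without yet using effectivity.

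At this point effectivity enters decisively. Composing with $\psi_*^{-1}$, one obtains an effective universal self-isometry $\Psi=\psi_*^{-1}\circ\Phi$ of $UNS(\cE)$, and the task reduces to showing $\Psi$ is the identity (or, in the arbitrary characteristic zero case, becomes the identity after a quadratic base field extension, corresponding to the involution $x\mapsto -x$ on the generic fibre). Effectivity forces the zero section to map to some section, and each irreducible component $v^t_i$ of a singular fibre to another component $v^t_j$, because these are the only effective classes with the requisite self-intersection and incidence data (section~\ref{sec:effectivity}). Using the fact that a torsion section is determined by the collection of components it meets at each singular fibre, one can then show that $\Psi$ acts geometrically on the fibral divisors and on the torsion part of the Mordell--Weil group (section~\ref{sec:fibral2}).

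The hard part, and the one I expect to be the main obstacle, is the final step (section~\ref{sec:uett}): showing that $\Psi$ acts trivially on the non-torsion part of the Mordell--Weil group of $E$ over $K$, and then descending the resulting isomorphism to the claimed field of definition. The approach is to exploit the narrow Mordell--Weil group, whose elements are characterised by their intersection with the identity components $v^t_0$, and to combine this with the Galois action via the exact sequence~\eqref{seqofgalois}: by taking a base change $b\in\cB$ so that the Galois group $G(L/K)$ acts transitively on a generating set of narrow sections, and using that $\Psi$ is equivariant for this Galois action, one forces $\Psi$ to be trivial on narrow sections, hence on all of $MW(E_L)$ by finite index, hence globally. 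The descent of $\theta$ to $k$ in the finitely generated case is a standard Galois cocycle argument once $\theta$ is shown to commute with the $\mathrm{Gal}(\bar k/k)$-action; the quadratic extension in the general characteristic zero case accounts for the $[-1]$ ambiguity on $E$, which is eliminated precisely when some singular fibre has $n\geq 3$ components since then $[-1]$ acts non-trivially on the component group in a way detectable by $\Phi$.
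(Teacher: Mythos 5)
Your outline correctly reproduces the first several stages of the paper's argument (fibral preservation and Kodaira types via base change, Tate's isogeny theorem to get an isomorphism of generic fibres before effectivity enters, effectivity to pin down sections and irreducible fibre components, torsion sections determined by their component data), and you correctly single out the last step as the crux. But that last step is where the proposal departs from the paper and where it would not go through as written.

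Your proposed mechanism --- "take a base change $b$ so that $G(L/K)$ acts transitively on a generating set of narrow sections, then use equivariance to force $\Psi$ trivial" --- is not available: there is no reason for a Galois group of a base change to act transitively on a generating set of the narrow Mordell--Weil group, and no such transitivity is used in the paper. The actual argument (Theorem~\ref{idonfibres}) introduces the \emph{defect} $u_b(P)=\phi^0_b(P)-P$ and first shows $u_b(P)$ always lands in the narrow Mordell--Weil group $E_O(K_b)$. The crucial input is then a \emph{divisibility} obstruction, not transitivity: $E_O(K)$ is finitely generated (Mordell--Weil, Lang--N\'eron) and torsion-free, so a nonzero $u\in E_O(K)$ is not divisible by arbitrarily large $n$ inside $E_O(\bar K)$ once one proves the Galois invariance statement that any $u'\in E_O(L)$ with $nu'=u$ already lies in $E_O(K)$. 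On the other hand, universality of $\Phi$ hands you exactly such a division: picking $Q\in E(\bar K)$ with $nQ=P$ gives $nU(Q)=U(P)$ with $U(Q)\in E_O(\bar K)$. Taking $n$ large enough forces $U(P)=0$, i.e.\ $\Phi((P))=(P)$. This compatibility-of-defects-under-division trick is the missing idea.

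Two smaller points. First, you compress Section~\ref{sec:fibral2} to "torsion is determined by components," but the paper also needs the Tate uniformization filtration on $V_\ell(E)$ and the decomposition group argument to show $\Phi$ is upper-triangular, which together with $\Phi^2=\mathrm{Id}$ on torsion forces $\Phi=\pm\mathrm{Id}$ on $E[\ell^\infty]$ (Corollary~\ref{pm}, Proposition~\ref{prop:geomtors}); knowing components alone gives $\Phi^2=\mathrm{Id}$ but not the stronger diagonalization. Second, for the descent to $k$ the hypothesis is that \emph{all} singular fibres are of type $I_n$ with $n\geq 3$, not just some; the paper's Proposition~\ref{psitoPsi} needs this at every singular place to rigidify $\Psi$ from its restriction to $NS(X)$.
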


This result is the analogue of the refined Torelli theorem for $K3$ surfaces
(\cite[Theorem 11.1]{BHPV}), with the additional assumptions involving 
base changes. 

\begin{example}\label{torellifailure}
 Suppose $X_s$ is a family of non-isotrivial 
elliptic surfaces over a
  curve $C$
  parametrized by a irreducible variety $S$. We assume that the ground
field $k$ is algebraically closed. Let $\eta$ be the generic point of
$S$. For a general point $s\in S(k)$, i.e., outside of a countable
union of proper closed subvarieties of $S$, the specialization map
$i_s: NS(X_{\eta})\to NS(X_s)$ is an isomorphism (\cite[Proposition
3.6]{MP}). By the continuity theorem for interesection products
(\cite[Theorem 10.2]{Fu}), it follows that the specialization map
$i_s$ is an isometry. This gives examples of non-isomorphic elliptic
surfaces whose N\'eron-Severi groups are isometric.

Suppose $k=\C$. The morphism $\pi^*: H^1(C,\Z)\to H^1(\bar{X_s}, \Z)$ 
is an isomorphism preserving the Hodge structures. 
Suppose $h^{02}(X_s)=h^{20}(X_s)=0$,
and the group $H^{1,1}(X_s)=NS(X_s)$. This happens for rational elliptic
surfaces. The rational elliptic surfaces with reduced discriminant 
have a $8$-dimensional moduli (\cite{HL}). 
 In particular, a
Torelli type theorem does not hold for elliptic surfaces in general.   
\end{example}

Thus, in order to obtain Torelli type results, it is necessary to bring in
extra inputs: for example, transcendental inputs like Hodge theory, or
some kind of Galois or universal invariance like we do out here.

\subsection{A (non-effective) universal Torelli theorem}

We now give an analogue of the (weak) Torelli theorem for $K3$ surfaces
(\cite[Corollary 11.2]{BHPV}), where we do not assume that the map
$\Phi$ is effective, but with the additional assumptions involving 
base changes. 

\begin{theorem}\label{utt}
Let $k$ be a field of characteristic zero or finitely generated over
its prime field, and 
$\cE: X\xrightarrow{\pi} C, ~\cE': X'\xrightarrow{\pi'} C$ be
semistable elliptic
surfaces over $k$.

Suppose $\Phi: UNS(\cE)\to UNS(\cE')$ is an  universal isometry between 
universal N\'eron-Severi datum attached to $\cE$ and $\cE'$ as defined
above.  

Then the surfaces $\cE$ and $\cE'$ are isomorphic.
The isomorphism $\theta$ is defined over $k$ if $k$ is finitely generated over
its prime field, and over a quadratic extension $l$ of $k$ in
case $k$ is an arbitrary field of characteristic zero. If we assume
further that the
Kodaira types of the singular fibres of $X\to C$ are of type $I_n$
with $n\geq 3$, then the isomorphism $\theta$ can be  defined over $k$.  
\end{theorem}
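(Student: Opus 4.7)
The plan is to observe that the conclusion of Theorem \ref{utt} can be extracted from the portion of the argument for Theorem \ref{uett} that precedes any use of the effectivity hypothesis. The outline of the paper makes this explicit: the isomorphism between the two surfaces is established in section \ref{sec:utt} using only the existence of a universal isometry, while sections \ref{sec:effectivity}--\ref{sec:uett} invoke effectivity only in order to identify $\Phi$ itself with the isometry $\Theta$ induced by this isomorphism. Under the weaker hypothesis of Theorem \ref{utt}, the identification $\Phi = \Theta$ is no longer available, but the underlying surfaces remain isomorphic, and that is all that is asserted.

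Concretely, I would proceed in three steps. First, using the compatibility of $\Phi$ with all base changes $b \in \cB_C$ together with the scaling formula \eqref{pullbackns}, one shows that $\Phi$ preserves the class of a fibre up to sign and hence matches the fibral sublattices of $NS(X_b)$ and $NS(X'_b)$; running this over arbitrarily ramified base changes forces the Kodaira--N\'eron types $I_{n_t}$ of the singular fibres of $X$ and $X'$ to agree (sections \ref{sec:fibral} and \ref{sec:fibral2}). Second, the resulting Galois-equivariant lattice data, combined with Tate's isogeny conjecture for elliptic curves over $K = k(C)$ (valid under the hypothesis on $k$), produces an isogeny between the generic fibres $E$ and $E'$; this is the content of section \ref{sec:isogeny}. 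Third, this isogeny is upgraded to an isomorphism by exploiting the fact that $\Phi$ is an isometry at \emph{every} level $b$: a nontrivial isogeny degree would force discrepancies in the scaling behaviour across the tower of pull-backs $p_a^*$ that are incompatible with the commutative diagrams in the definition of a universal isometry, so the degree must be $1$. This is carried out in section \ref{sec:utt}.

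The field of definition assertions then carry over verbatim from the corresponding analysis for Theorem \ref{uett}, since the Galois descent of the isomorphism depends only on the existence of an isomorphism of surfaces together with the Galois action coming from the sequence \eqref{seqofgalois}, and not on any effectivity of $\Phi$. The main obstacle, as in the effective case, is the third step: producing a genuine \emph{isomorphism} of elliptic surfaces from an isogeny of generic fibres. The critical ingredient there is the full universality of $\Phi$, i.e.\ the compatibility with pull-back along every finite separable base change, which rigidifies the isogeny enough to pin its degree down to one.
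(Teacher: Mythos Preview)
Your broad outline is correct: Steps 1 and 2 track the paper closely (Propositions \ref{fibre}, \ref{fibrepreserve}, \ref{singlocus}, and \ref{isogeny}), and you are right that effectivity enters only after the surfaces are shown to be isomorphic. However, your Step 3 has a genuine gap. The mechanism you propose---that a nontrivial isogeny degree would create ``discrepancies in the scaling behaviour across the tower of pull-backs''---is not how the argument runs, and it is not clear how to make such a scaling argument work: the pull-back maps scale the intersection form uniformly by degree regardless of which elliptic surface one is on, so no contradiction with the commutative squares arises from degree alone.

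The paper's actual argument in Proposition \ref{isom} is quite different. From Step 2 one has, for \emph{each} prime $\ell$ coprime to $\mathrm{char}(k)$, an isogeny $\psi_\ell: E \to E'$ whose kernel has order coprime to $\ell$ (because $\Phi$ induces an isomorphism on $\ell$-primary torsion). One then plays these isogenies off against one another: composing $\psi_\ell$ with the dual of $\psi_{\ell'}$ gives an endomorphism of $E$, which is multiplication by an integer since $\mathrm{End}(E)=\Z$ (Deuring). Comparing the $\ell'$-primary parts of the kernels forces the prime-to-$p$ part of $\ker(\psi_\ell)$ to be trivial. In characteristic $p>0$ a separate argument is needed for the $p$-part: one reduces to an \'etale cyclic kernel of $p$-power order and then invokes the Dokchitser--Dokchitser result that such an isogeny changes the Kodaira type $I_n$ to $I_{n/p^k}$, contradicting the equality of singular fibre types established in Step 1. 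So Step 1 is not merely preparatory---in positive characteristic it feeds back into Step 3 in an essential way.

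Two minor points: your reference to section \ref{sec:fibral2} is misplaced, as that section sits inside the effectivity-dependent part of the paper; everything you need for Step 1 is in section \ref{sec:fibral}. And for the descent refinement when all fibres are $I_n$ with $n\ge 3$, the paper does invoke Proposition \ref{psitoPsi}, which is proved later but does not rely on effectivity, so your claim that the descent is effectivity-free is correct.
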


\begin{remark} H. Kisilevsky pointed out the relevance of these
  theorems to a conjecture of Y. Zarhin (\cite{K}): suppose $E, ~E'$
  are elliptic curves defined over a number field $k$: if the ranks
  of the Mordell-Weil groups of $E$ and $E'$ are equal over all finite
  extensions of $k$, are $E$ and $E'$ isogenous? 

It would be interesting to know whether analogues of our theorems hold
for elliptic curves defined over number fields. 
\end{remark}

\begin{remark}
It is possible to drop the semistability hypothesis, but instead
require that an isometry of the universal N\'eron-Severi groups exists
whenever both the elliptic surfaces acquire semistable reduction:
\begin{theorem}\label{nonsemistable}
Let $k$ be a field of characteristic zero or finitely generated over
its prime field, and 
$\cE: X\xrightarrow{\pi} C, ~\cE': X'\xrightarrow{\pi'} C$ be elliptic
surfaces over $k$.

Suppose $\Phi: UNS(\cE)\to UNS(\cE')$ is an  universal isometry between 
universal N\'eron-Severi datum attached to $\cE$ and $\cE'$ as defined
above.  

Then the surfaces $\cE$ and $\cE'$ are isomorphic 
over $k$ over a quadratic extension $l$ of $k$ in
case $k$.
\end{theorem}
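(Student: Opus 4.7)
The plan is to reduce to Theorem \ref{utt} by a simultaneous semistable base change and then descend the resulting isomorphism back to $C$ via Galois theory. Applying Theorem \ref{ssbc}(i) to each of $\cE$ and $\cE'$ separately produces triples in $\cB_C$ over which each surface becomes semistable; by passing to the normalization of the Galois closure of the compositum of the corresponding function fields (and enlarging the constant field as needed so that the resulting curve is geometrically integral), we obtain a single triple $(l_0, B_0, b_0)\in \cB_C$ with $b_0: B_0 \to C\times_k {\rm Spec}(l_0)$ Galois, with group $\Gamma = G(l_0(B_0)/k(C))$ as in \eqref{seqofgalois}. Theorem \ref{ssbc}(ii) then guarantees that both $\cE_{b_0}: X_{b_0}\to B_0$ and $\cE'_{b_0}: X'_{b_0}\to B_0$ are semistable. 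The given universal isometry $\Phi$ restricts to a universal isometry $\Phi_0$ between $UNS(\cE_{b_0})$ and $UNS(\cE'_{b_0})$ for base changes $a \in \cB_{B_0}$, and Theorem \ref{utt} applied to $\Phi_0$ produces an isomorphism $\theta_0: \cE_{b_0}\to \cE'_{b_0}$ of elliptic surfaces inducing $\Phi_0$, defined over $l_0$ or a quadratic extension of $l_0$.

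For the descent step, the remark following Theorem \ref{ssbc} gives compatible actions of $\Gamma$ on $X_{b_0}$ and $X'_{b_0}$ extending from the base change $X^{b_0}, X'^{b_0}$ to their minimal desingularizations. Universality of $\Phi$ translates into the identity $\Phi_0\circ \gamma^* = \gamma^*\circ \Phi_0$ for every $\gamma\in \Gamma$. Consequently, for each such $\gamma$, the conjugate $\gamma^{-1}\theta_0\gamma$ is an isomorphism $\cE_{b_0}\to \cE'_{b_0}$ inducing the same universal isometry $\Phi_0$. Their difference $\alpha_\gamma := (\gamma^{-1}\theta_0\gamma)\circ \theta_0^{-1}$ is therefore an automorphism of $\cE'_{b_0}$ acting trivially on $UNS(\cE'_{b_0})$. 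The next step is to argue that any such automorphism lies in $\{\pm 1\}$: the non-potential iso-triviality of $\cE'$ makes the automorphism group of the generic fibre as an elliptic curve equal to $\{\pm 1\}$, and any nontrivial Mordell-Weil translation acts nontrivially on $NS$ by shifting the class of the zero section. The resulting assignment $\gamma\mapsto \alpha_\gamma$ is a $1$-cocycle valued in $\{\pm 1\}$, i.e.\ a quadratic character of $\Gamma$, and Galois descent along the fixed field of its kernel yields an isomorphism $\theta: \cE\to \cE'$ defined over a quadratic extension $l$ of $k$.

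The main obstacle is the identification of the ambiguity group of $\theta_0$ modulo the universal isometry it induces, i.e.\ showing that the kernel of the natural map $\mathrm{Aut}(\cE'_{b_0}) \to \mathrm{Aut}(UNS(\cE'_{b_0}))$ is $\{\pm 1\}$. This is a uniqueness statement in the spirit of the effective Torelli theorem \ref{uett} but carried out without the effectivity hypothesis, and it is precisely the potential nontriviality of this $\{\pm 1\}$-class in $H^1(\Gamma,\{\pm 1\})$ that accounts for the quadratic extension appearing in the conclusion. A secondary technical point is the construction of $B_0$ as a geometrically integral Galois cover in $\cB_C$, but this is a routine application of standard field-theoretic considerations and does not affect the structural argument.
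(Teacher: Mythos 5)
Your proposed proof takes a genuinely different route from the paper's. The paper takes \emph{two} semistabilizing covers $B_1, B_2 \to C$ with linearly disjoint function fields, applies Theorem \ref{utt} over each, and descends the resulting isomorphisms directly using the disjointness. You instead use a \emph{single} Galois cover $B_0$ and attempt a $1$-cocycle argument. The paper's sketch is admittedly terse, but it does not need to identify the obstruction group for descent explicitly; your approach makes that identification the crux of the argument, and two steps in it do not hold up.

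First, Theorem \ref{utt} asserts only that $\cE_{b_0}$ and $\cE'_{b_0}$ are isomorphic; it does \emph{not} assert that some isomorphism $\theta_0$ induces $\Phi_0$. Indeed, the whole thrust of the non-effective theorem is that $\Phi_0$ may differ from any geometrically induced isometry by non-effective universal isometries such as the Picard--Lefschetz lifts of Theorem \ref{upl}, and these do not come from automorphisms of the surface. So your claim that $\gamma^{-1}\theta_0\gamma$ ``induces the same universal isometry $\Phi_0$'' as $\theta_0$, and hence that $\alpha_\gamma$ lies in $\ker\bigl(\mathrm{Aut}(\cE'_{b_0})\to \mathrm{Aut}(UNS(\cE'_{b_0}))\bigr)$, is unsupported: it silently upgrades Theorem \ref{utt} to the conclusion of Theorem \ref{uett}.

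Second, even granting the first point, your identification of that kernel as $\{\pm 1\}$ is wrong. The inversion $\iota\colon P\mapsto -P$ does act nontrivially on $UNS(\cE'_{b_0})$: it sends the class of a section $(P)$ to $(-P)$, and on an $I_n$ fibre with $n\geq 3$ it reverses the cyclic ordering of the components $v_i \mapsto v_{-i}$. (This is exactly why, in Theorem \ref{guti}, $\mathrm{Aut}(E/k(C))\cong \Z/2\Z$ appears as a nontrivial quotient of $\mathrm{Aut}(UNS(X))$ rather than vanishing into a kernel.) The kernel is in fact trivial, so your cocycle argument would force $\theta_0$ to be $\Gamma$-equivariant on the nose and produce an isomorphism over $k$ outright, which overshoots the allowed quadratic extension and signals that the premise (point one above) must be false. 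A correct version of your strategy is available: normalize $\theta_0$ so that it carries the zero section of $\cE_{b_0}$ to that of $\cE'_{b_0}$; since $\gamma$ fixes zero sections, the discrepancies $\alpha_\gamma$ then fix the zero section of $\cE'_{b_0}$ and hence lie in $\mathrm{Aut}(E'/L)=\{\pm 1\}$ by Deuring's theorem and non-isotriviality — without any appeal to $UNS$. This gives the quadratic character and the advertised conclusion, but it is a different justification from the one you wrote.
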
 
The theorem follows from Theorem \ref{utt},
since the property of having semistable reduction is
local. One can produce different curves $B\to C$, 
whose function fields are disjoint and
over which the elliptic surfaces become semistable. By descent, the
isomorphism $\theta$ defined over the various curves $B$ will descend
to an isomorphism between the two elliptic surfaces defined over $C$. 
\end{remark}
\subsection{A reformulation}
Let $\bar{K}_s$ denote a separable algebraic closure of $K=k(C)$
containing $\bar{k}_s$.  Suppose $L$ is a finite extension of $K$
contained in  $\bar{K}_s$. Then $L$ is of the form  $l(B)$, where $l$
is the closure of $k$ inside $L$, and  $B$ is a geometrically 
integral, regular  
projective curve defined over $l$. There is a bijective order reversing
correspondence between 
finite extensions $L$ of $K$ contained in  $\bar{K}_s$ and 
finite, separable maps
 $b:B\to C\times_k {\rm Spec} (l)$, where $B$ is an integral, normal  
projective curve defined over $l$.

Suppose $L/K$ is Galois and $B$ is regular.  
The Galois group $G(L/K)$ acts on $NS(X_b)$. 
Assume now that $X\to C$ is semistable. Consider the direct limit, 
\[ \overline{NS(X)/C}=\varinjlim_{b\in \cB_C} NS(X_b).\]
This acquires an action of the absolute Galois group $G(\bar{K}_s/K)$
of $K$.  Given a sequence of
finite maps $A\xrightarrow{a} B\xrightarrow{b} C$ with $b\circ a\in
\cB$, we have ${\rm deg}(b\circ a)={\rm deg}(b) {\rm deg}(a)$.  Define a 
normalized bilinear pairing on $NS(X_b)$, by 
\[ < x, y>_n= \frac{1}{{\rm deg}(b)}<x,y> \quad x, y\in NS( X_b).\]
With this modified inner product, the map $p_b^*$ gives an isometry of
$NS(X)$ into $NS(X_b)$.  Hence,
the normalized inner products on $NS(X_b)$, 
can be patched to give a
symmetric, bilinear $\Q$-valued pairing, $<.,.> :
\overline{NS(X)}\times \overline{NS(X)}\to \Q$. 
Since $p_b^*$ maps effective divisors to effective divisors, the cone of
effective divisors in $\overline{NS(X)}$ can be defined, and it makes
sense to define effective morphisms between the completed N\'eron-Severi
groups. Theorem \ref{uett} can be reformulated as: 
\begin{theorem} \label{uett-galois}
Let $k$ be a field of characteristic zero or finitely generated over
its prime field, and 
$\cE: X\xrightarrow{\pi} C, ~\cE': X'\xrightarrow{\pi'} C$ be
semistable elliptic
surfaces over $k$.

Suppose $\Phi:  \overline{NS(X)/C} \to \overline{NS(X')/C}$ is an
effective,   $G(\bar{K}_s/k(C))$-equivariant isometry. 

Then $\Phi$  arises from an
isomorphism $\theta: \cE \to \cE'$ between the elliptic surfaces. 
The isomorphism $\theta$ is defined over $k$ if $k$ is finitely generated over
its prime field, and over a quadratic extension $l$ of $k$ in
case $k$ is an arbitrary field of characteristic zero. If we assume
further that the
Kodaira types of the singular fibres of $X\to C$ are of type $I_n$
with $n\geq 3$, then the isomorphism $\theta$ can be  defined over
$k$.  
\end{theorem}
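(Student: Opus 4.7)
The plan is to deduce Theorem \ref{uett-galois} from the already-stated Theorem \ref{uett} by establishing that the Galois-equivariant direct-limit formulation and the compatible-family formulation describe the same data. In one direction, any universal isometry $(\phi_b)_{b\in\cB_C}$ in the sense of Theorem \ref{uett} patches via the compatibility $\phi_{b\circ a}\circ p_a^* = p_a^*\circ \phi_b$ to a map $\Phi$ on the direct limit; it is a $\Q$-isometry for the normalized pairing because the normalization factor $1/\deg(b)$ is multiplicative in the composition $b\circ a$, and Galois equivariance is recovered as a special case of the compatibility condition applied to automorphisms of $B$ over $C\times_k \operatorname{Spec}(l)$, by setting $a=\sigma$ with $b\circ\sigma=b$ in the definition.

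The substantive content is the reverse direction. Given an effective, $G(\bar{K}_s/K)$-equivariant isometry $\Phi$ of $\overline{NS(X)/C}$, for each $b\in\cB_C$ I would define $\phi_b$ as the restriction of $\Phi$ to the natural image of $NS(X_b)$ in the direct limit, and then verify that its image lies in $NS(X'_b)\subseteq \overline{NS(X')/C}$ rather than merely in some larger piece. To see this, pass to a $b': B'\to C$ whose corresponding extension $L'/K$ is Galois and contains $L=l(B)$, and use the exact sequence \eqref{seqofgalois} to decompose $G(L'/K)$ into its geometric part $G(L'/l'K)=\operatorname{Aut}(B'/C\times_k\operatorname{Spec}(l'))$ and its arithmetic part $G(l'/k)$. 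Standard Galois descent for Picard groups along finite Galois covers of curves identifies $NS(X_b)$ as the subgroup of $NS(X_{b'})$ cut out by invariance under the subgroup of $G(L'/K)$ fixing $L$, combined with the rationality condition over $l$. Equivariance of $\Phi$ forces it to preserve these invariants, so $\phi_b$ lands in $NS(X'_b)$. The further-base-change compatibility and effectivity of the $\phi_b$ are inherited from $\Phi$ directly, and the isometry property transfers since the normalized pairing restricts on $NS(X_b)$ to $(1/\deg b)\langle\cdot,\cdot\rangle$. Having produced a universal effective isometry, Theorem \ref{uett} supplies the required isomorphism $\theta:\cE\to\cE'$ with the asserted field-of-definition properties, and $\Phi$ visibly comes from $\theta$.

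The main obstacle I anticipate is precisely this descent step: the subtlety is that the direct limit mixes Néron--Severi groups defined over varying constant fields $l$, so one must carefully distinguish the action of the geometric Galois group $G(L'/l'K)$ from that of the arithmetic Galois group $G(l'/k)$ in \eqref{seqofgalois}, and verify that the combined Galois-invariant part inside $NS(X_{b'})$ really is $NS(X_b)$ and not some larger or smaller lattice. Once this identification is in place, the rest of the argument is a purely formal unwinding of the two equivalent descriptions and an invocation of Theorem \ref{uett}.
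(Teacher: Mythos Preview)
Your proposal is correct and takes essentially the same approach as the paper: the paper presents Theorem~\ref{uett-galois} simply as a reformulation of Theorem~\ref{uett} and does not give a separate proof, so the only content is precisely the equivalence of the two packagings that you spell out. Your identification of the descent step (that $\Phi$ restricted to the image of $NS(X_b)$ lands in $NS(X'_b)$, using Galois invariance via \eqref{seqofgalois}) as the one nontrivial point is exactly right, and the paper leaves it implicit.
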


A similar reformulation can be given for Theorem \ref{utt}. 

\subsection{Universal lifts of Picard-Lefschetz isometries}
The question of finding examples of non-effective universal
isometries, leads one  to study 
Picard-Lefschetz reflections.  
Given an element $v\in NS(X)$ with $v^2=-2$, the Picard-Lefschetz
reflection $s_v$ based at $v$ is defined as, 
\[ s_v(D)= D+<D,v>v, \quad D\in NS(X).\]
The Picard-Lefschetz reflection $s_v$ is a reflection around the
hyperplane orthogonal to $v$: 
\[ s_v^2=Id \quad \mbox{and} \quad s_v(v)=-v.\]
The following theorem shows that in the semistable case, the
Picard-Lefschetz reflections can be lifted to universal isometries: 
\begin{theorem}\label{upl}
Let 
$\cE: X\xrightarrow{\pi} C$ be  a semistable elliptic surface over an
field $k$. 
Suppose $x_0\in C(k)$ is an element of
the singular locus $S$  and 
that the fibre of $p$ over
$x_0$ is of type $I_n$ for some $n\geq 3$.  
Let  $v$ be an irreducible component of the fibre $p^{-1}(x_0)$.  Then
there exists a universal isometry $\Phi(v):UNS(\cE)\to UNS(\cE)$, lifting
the Picard-Lefschetz reflections $s_{v}$:
\[ \Phi(v)\mid_{NS(X)}=s_v.\]
\end{theorem}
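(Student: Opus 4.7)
The plan is to define, for each $b \in \cB_C$, an explicit isometry $\phi_b$ of $NS(X_b)$ as a specific element of the affine Weyl group generated by Picard--Lefschetz reflections at the components of singular fibres of $\cE_b$ lying above $x_0$, and to verify that the resulting family $\{\phi_b\}_{b \in \cB_C}$ is compatible with all pullbacks $p_a^*$ and restricts to $s_v$ when $b = \mathrm{id}_C$.

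First, using the explicit description of the pullback map on fibral divisors from section \ref{sec:basechange}, one obtains a clear picture of the situation above $x_0$ under a base change $b$ which is totally ramified of degree $d$ at $x_0$: the $I_n$ fibre becomes an $I_{dn}$ fibre, and the pullback of the component $v = v^{x_0}_i$ is a definite combination of a consecutive block of components $w^{y_0}_j$ of the new fibre. Guided by this, the representation-theoretic construction of section \ref{sec:repn} produces a specific element of the affine Weyl group $\tilde{A}_{dn-1}$ lifting $s_v \in \tilde{A}_{n-1}$, given explicitly as a product of Picard--Lefschetz reflections at the $w^{y_0}_j$'s.

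Second, for an arbitrary $b \in \cB_C$, let $y_1, \ldots, y_r$ be the points of $B$ above $x_0$ with ramification indices $d_1, \ldots, d_r$, so that the fibre of $\cE_b$ above $y_j$ is of type $I_{n d_j}$. I define $\phi_b$ as the product $\prod_j \sigma_j$, where $\sigma_j$ is the lift constructed in Step~1 associated with the $I_{n d_j}$ fibre above $y_j$. These factors commute since the components they involve are supported on disjoint fibres, and each $\sigma_j$ is a product of Picard--Lefschetz reflections, so $\phi_b$ is an isometry of $NS(X_b)$. When $b = \mathrm{id}_C$, there is a single point above $x_0$ with ramification index $1$ and $\phi_b$ reduces to $s_v$, establishing $\Phi(v)\mid_{NS(X)} = s_v$.

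Third, I would verify the compatibility $\phi_{b \circ a} \circ p_a^* = p_a^* \circ \phi_b$ for any tower $A \xrightarrow{a} B \xrightarrow{b} C$. Above each point $z$ of $A$ lying over some $y_j$ which lies over $x_0$, the ramification index of $b \circ a$ at $z$ is the product $e_z d_j$ of those of $a$ and $b$, so the desired commutativity reduces, fibre by fibre, to the closure-under-composition property of the family of Weyl-group homomorphisms $\tilde{A}_{n-1} \to \tilde{A}_{dn-1}$ proved in section \ref{sec:repn}, applied to the factorisation $e_z d_j = e_z \cdot d_j$. The main obstacle lies in verifying this compatibility on divisor classes \emph{outside} the span of the fibral components above $x_0$, most notably on sections and multisections, which the Picard--Lefschetz reflections move nontrivially: tracking the intersection numbers of such classes with the components $w^{y_j}_*$ through further base change, and reconciling them via the explicit combinatorial form of the Weyl-group homomorphisms developed in section \ref{sec:repn}, is the technical heart of the argument.
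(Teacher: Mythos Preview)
Your proposal is correct and follows essentially the same approach as the paper: define $\phi_b$ as the product over preimages $y_j$ of $x_0$ of the local lifts $R_{y_j}(s_v)$ constructed via the representation $R^{e}_n$ of Section~\ref{sec:repn}, and verify compatibility with pullbacks using Corollary~\ref{cor:univisom} on fibral divisors and a direct check on the remaining generators.

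Two small comments. First, you do not need multisections: by the Shioda--Tate description, $NS(X_b)$ is generated by the trivial lattice and by sections, so checking compatibility on fibral divisors and on sections suffices. Second, you overestimate the difficulty of the section case. The paper's computation is short: a section $(P)$ meets each singular fibre in exactly one component with multiplicity one, so either $(P)$ misses $v_k$ and is fixed by $s_{v_k}$, in which case $p_a^*(P)$ misses all the components $w_i$ with $|i-ke_y|<e_y$ occurring in $R_y(s_{v_k})$ and is likewise fixed; or $(P)$ meets $v_k$, in which case $p_a^*(P)$ meets $w^y_{ke_y}$, and using that the reflections $s_{w^y(i,j)}$ in $R_y(s_{v_k})$ are pairwise orthogonal one computes directly that $R_y(s_{v_k})(p_a^*(P)) = p_a^*(P) + \sum_{w(i,j)\in I(n,e_y,k)} w^y(i,j)$, which equals $p_a^*(P)+p_a^*(v_k)$ by Proposition~\ref{ssfibrepullback}. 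Summing over the $y_j$ gives exactly $p_a^*(s_{v_k}(P))$. So what you flagged as the technical heart is in fact a two-case computation exploiting that sections have intersection number one with the fibre.
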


\subsection{Group of universal isometries}

The proof of Theorem \ref{upl} allows us to determine
 the structure of the group
of universal isometries of a semistable elliptic surface.

We recall that the affine Weyl group of type $\tilde{A}_{n-1}$,
denoted here by $W_n$, is the group with the presentation, 
\begin{equation}\label{genaffineweyl}
 \left< s_0, \cdots,
s_{n-1} \mid (s_is_j)^{m_{ij}}=1\right>,
\end{equation}
where 
\begin{equation}\label{braidrelations}
m_{ii}=1, ~m_{i(i+1)}=3 \quad \mbox{and}\quad m_{ij}=2 \quad
\mbox{for} \quad |i-j|\geq 2.
\end{equation}
Here we are using the notation for a cyclic group of order $n$, and
the obvious meaning for $|i-j|$. 

Restricted to a fibre of Kodaira-N\'eron type $I_n$, 
the Picard-Lefschetz reflections based on the irreducible components
of the fibre generates
the affine Weyl group of type $\tilde{A}_{n-1}$. Suppose that the
irreducible components of the singular fibre are $v_0, v_1,
\cdots, v_{n-1}, v_n=v_0$. The map $s_{v_i}\mapsto s_i$, yields 
an identification of the group
generated by the Picard-Lefschetz reflections to the affine Weyl group
of $\tilde{A}_{n-1}$.

\begin{theorem}\label{guti}
Let $\cE: X\xrightarrow{\pi} C$ be a semistable elliptic surface over a
field $k$ of characteristic zero or finitely generated over its prime
field.  Assume
that the singular locus $S$ is contained in $C(k)$ and let
$S=\{s_1, \cdots,s_r\}$, and that the Kodaira-N\'eron type of the
fibre over $s_i, ~i=1,\cdots, r$ is $I_{n_i}, ~n_i\geq 3$ respectively.

The group $EffAut(UNS(X))$ of universal effective isometries of $\cE$
is a semidirect product $E(k(C))\rtimes Aut(E/k(C))$, where 
$ E(k(C)) $ acts by translations of the section of $\pi$ corresponding
to an element of $ E(k(C)) $. The group $ Aut(E/k(C))$ of
automorphisms of the generic fibre is isomorphic to $\Z/2\Z$.

The group $Aut(UNS(X))$ of universal isometries of $\cE$
is a semidirect product $(\{\pm Id\}\times
\prod_{i=1}^rW_{n_i})\rtimes EffAut(UNS(X))$, 
where $\{\pm Id\}$ is central and is the isomorphism sending every
divisor to its negative; the group $\prod_{i=1}^rW_{n_i}$ is the group
generated by the universal Picard-Lefschetz isometries corresponding
to the irreducible  components of the singular fibres of $\pi$.

\end{theorem}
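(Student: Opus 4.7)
The plan is to prove the two semidirect-product decompositions in order.

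For the first assertion, I apply Theorem \ref{uett}: every effective universal isometry $\Phi:UNS(\cE)\to UNS(\cE)$ arises from a unique $\theta\in Aut_C(\cE)$, and conversely every such $\theta$ induces an effective universal isometry, so $EffAut(UNS(X))\cong Aut_C(\cE)$. Using the distinguished zero section $O$, the classical translation-inversion splitting for an elliptic fibration with a section yields a split short exact sequence
\[
1\to E(k(C))\to Aut_C(\cE)\to Aut(E/k(C))\to 1,
\]
and the non-isotriviality hypothesis---transcendence of $j(E)$ over $k$---forces $Aut(E/k(C))=\{\pm 1\}\cong \Z/2\Z$, completing the first assertion.

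For the second assertion, let $\Phi\in Aut(UNS(X))$. The strategy is to reduce to the effective case by composing $\Phi$ with elements of $\{\pm Id\}$ and of $\prod_{i=1}^r W_{n_i}$. First, the fibre class $F$ is characterized up to sign inside $UNS(\cE)$ by its behaviour under all base-change pullbacks, so $\Phi(F)=\pm F$; multiplying by $-Id$ if necessary, assume $\Phi(F)=F$. By Sections \ref{sec:fibral} and \ref{sec:fibral2}, $\Phi$ preserves the sublattice of fibral divisors, respects its decomposition into the root lattices $L_i$ of the singular fibres, and preserves the Kodaira type. Any induced permutation of singular fibres is realized by an effective automorphism of $\cE$ (via Theorem \ref{utt} and the first assertion), so after composing with a suitable element of $EffAut(UNS(X))$ we may assume $\Phi$ fixes each singular fibre setwise.

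Restricted to each $L_i$, $\Phi$ is an isometry of the affine root lattice $\tilde A_{n_i-1}$ preserving its own fibre class; the isometry group of this lattice decomposes as $W_{n_i}\rtimes D_{n_i}$, where the Dynkin symmetries $D_{n_i}$ are realized by effective automorphisms of $\cE$ (cyclic rotations via translations by torsion sections through the component map $E(k(C))\to \prod_i \Z/n_i\Z$, and the reflection via inversion $[-1]$), and the Weyl factor is realized universally by Theorem \ref{upl}. Absorbing the $D_{n_i}$-part into $EffAut(UNS(X))$ and the $W_{n_i}$-part into $\prod_i W_{n_i}$, we reduce to the case that $\Phi$ fixes every component $v^{s_i}_j$. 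At that point $\Phi$ acts trivially on the trivial sublattice of $NS(X)$ spanned by $O$, $F$, and the fibral components, so by the Shioda-Tate theorem it induces an isometry of the Mordell-Weil lattice $E(k(C))$; the $G(\bar{K}_s/K)$-equivariance of this residual isometry across all base changes, combined with the uniqueness clause of Theorem \ref{uett}, forces it to be a Mordell-Weil translation, hence effective, placing $\Phi$ in $EffAut(UNS(X))$. Uniqueness of the decomposition follows because no nontrivial element of $\{\pm Id\}\times \prod_i W_{n_i}$ is effective.

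The semidirect structure is then transparent: $\{\pm Id\}$ is central, and $EffAut(UNS(X))$ conjugates $\prod_i W_{n_i}$ by permuting factors according to the induced action of $Aut_C(\cE)$ on $\{s_1,\dots,s_r\}$. The main obstacle will be the middle reduction: proving that the only universal isometries supported on a single $I_{n_i}$-fibre are exactly $W_{n_i}\rtimes D_{n_i}$, with no additional hidden symmetries forced by compatibility across ramified base changes, and verifying that the Dynkin symmetries are indeed realized by effective automorphisms of $\cE$ even when the Mordell-Weil component map is not surjective. This rests on the explicit base-change analysis of fibral divisors in Section \ref{sec:basechange} and on the representation-theoretic study of the coherent embeddings $W_n\hookrightarrow W_{dn}$ accompanying totally ramified base changes (Section \ref{sec:repn}).
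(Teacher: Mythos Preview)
Your first assertion is fine and matches the paper's use of Theorem~\ref{uett}. The second assertion has a genuine gap, precisely at the point you flag as ``the main obstacle.''

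You propose to decompose the action of $\Phi$ on each fibre lattice as an element of $W_{n_i}\rtimes D_{n_i}$ and then absorb the Dynkin part $D_{n_i}$ into $EffAut(UNS(X))$, realizing the cyclic rotations by translations by torsion sections. But the component map $E(k(C))_{\mathrm{tors}}\to\prod_i\Z/n_i\Z$ is in general only injective (Theorem~\ref{thm:keyglobal}), not surjective, so there is no reason a given cyclic rotation of the components of a single fibre is induced by any section at all. Sections~\ref{sec:basechange} and~\ref{sec:repn}, which you cite, concern the Weyl-group embeddings under base change and say nothing about realizing Dynkin rotations effectively. A second, related gap: after you reduce to the case where $\Phi$ fixes every $v_j^{s_i}$, you assert that $\Phi$ acts trivially on the trivial lattice, in particular that $\Phi(O)=O$; but fixing the fibral components does not by itself pin down the image of the zero section, and your subsequent appeal to the uniqueness clause of Theorem~\ref{uett} is circular since that theorem presupposes effectivity.

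The paper's route avoids this entirely. It uses only the Weyl group to normalize: by \cite[Proposition~5.9]{Kac}, $W_{n_t}$ already acts transitively on the bases of the $\tilde A_{n_t-1}$ root lattice, so after multiplying by an element of $\prod_t W_{n_t}$ one may assume $\psi$ merely \emph{permutes} the standard basis of each fibre. The key step is then Proposition~\ref{prop:nesec}, a direct intersection-number computation showing that any such basis-permuting isometry automatically sends sections to sections; translating by the image of $(O)$ fixes the zero section, and the same computation forces $\psi(v_0^t)=v_0^t$, so the residual Dynkin symmetry is at most the reflection $v_i\mapsto v_{-i}$. Proposition~\ref{psitoPsi} then propagates this to every base change, and Theorem~\ref{torelli} (not Theorem~\ref{uett}) finishes. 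The point is that the troublesome Dynkin rotations are killed \emph{arithmetically}, by the interaction of the section with the fibre lattice, rather than by exhibiting torsion translations that may not exist.
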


\subsection{A class of representations of affine Weyl group of type
  $\tilde{A}_{n-1}$}
The process of showing that Picard-Lefschetz reflections based on the
irreducible components of singular fibres of a semistable elliptic
surfaces lift to an universal isometry yields an interesting class of
representations of the affine Weyl group $W_n$. 
Corresponding to a fibration with local ramification degree $e$, we
define a homomorphism, say   $R_e$, of $W_n$ into $W_{ne}$.  

Let $n\geq 3$. Denote by $\V_n$ the vector space
equipped with a symmetric bilinear form and 
basis $v_i, ~i \in \Z/n\Z$ satisfying, 
\[ v_i^2=-2, \quad v_iv_{i+1}=1, \quad v_iv_j=0, \quad i, ~j\in
\Z/n\Z, ~ |i-j|\geq 2,\]
where we are using the obvious meaning for $|i-j|\geq 2$. 
\begin{definition} \label{dfn:vij}
Given a natural number $n$, fix an orientation on $\Z/n\Z$,  for
instance, by identifying $\Z/n\Z$ with the $n$-th roots of unity. 
Given $i, ~j\in \Z/n\Z$, 
define the vector $v(i,j)\in \V_n$, as  
\[v(i,j)=\sum_{k=i}^j v_k, \]
where the indices occuring in the sum are taken with the positive
orientation from $i$ to $j$.  Another way of describing the indices
occuring in the sum is that we take integral representatives for $i$
and $j$ (denoted by the same letter) such that $i<j$
and then the sum goes from $i$ to $j$. It is assumed that the set of
integers $i\leq m\leq j$ maps injectively to $\Z/n\Z$ upon reduction
modulo $n$. 

The {\em support} of $v(i,j)$,
denoted by $spt(v(i,j))$ is
defined to be the set of integers in the interval $[i,j]$. The
{\em length} $l(v(i,j))$ of $v(i,j)$ is
the cardinality of the support of $v(i,j)$.  
\end{definition}
It can be seen that $v(i,j)^2=-2$, and that any two such distinct
vectors are orthogonal provided their supports have a non-empty
intersection (see Lemma \ref{vij}).
For $n\geq 3$,  $k\in \Z/n\Z$ and any natural number $e$,  define the set 
$I(n,e, k)$ to be the collection of vectors of the form $v(i,j)\in
\V_{ne}$ satisfying the following properties: 
\begin{itemize}
\item The support of $v(i,j)$ contains $ek$. 
\item The length of $v(i,j)$ is $e$. 
\end{itemize}
\subsection{Base change}
For the vectors $v_0, \cdots, v_{n-1}$ belonging to $\V_n$, 
define the following vectors $p_b^*(v_k)\in \V_{ne}$, 
\[ p_b^*(v_k)=ew_{ek}+\sum_{0<j<e}(e-j)(w_{ek-j}+w_{ek+j}), \]
where $w_i, ~i\in \Z/ne\Z$ forms the standard basis for $\V_{ne}$. 
This defines a linear map $p_b^*: \V_n\to \V_{ne}$. The significance of
this map is given by Proposition \ref{ssfibrepullback}, 
giving a description of the
inverse images of the irreducible components of a singular fibre under
a base change which is totally ramified of ramification degree $e$ at
the singular point under consideration. 

Define a map $R^e_n:W_n\to W_{ne}$ by defining
on the generators $s_0, \cdots, s_{n-1}$ of $W_n$ as: 
\[ R^e_n(s_k)=\prod_{v(i,j)\in I(n,e,k)}s_{v(i,j)}.\]
The following theorem shows that the maps $R^e_n$ form a system of
representations of the affine Weyl groups, closed under composition: 

\begin{theorem}\label{repnaffineweyl}
Let $n\geq 3$. With notation as above, the following holds: 
\begin{enumerate}
\item For any $v_i, ~i\in \Z/n\Z$,  
\[(R^e_n(s_{v_i}))(p_b^*v)=p_b^*(s_{v_i}(v)). \] 
\item For $e\geq 1$, $R^e_n$ is a representation from $W_n$ to $W_{ne}$. 
\item For any natural numbers $e, ~f$, 
\[ R^f_{ne}\circ R^e_n=R^{ef}_n.\]
\end{enumerate}
\end{theorem}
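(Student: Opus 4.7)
The cornerstone of the argument is the identity $p_b^*(v_k) = \sum_{v' \in I(n,e,k)} v'$ in $\V_{ne}$, obtained by comparing the coefficient of $w_{ek+s}$ on both sides (each equal to $e - |s|$ for $|s|\leq e - 1$). Since all $v' \in I(n,e,k)$ contain $ek$ in their supports, Lemma \ref{vij} implies they are pairwise orthogonal, so $R^e_n(s_k)$ is a product of commuting involutions acting as $D \mapsto D + \sum_{v' \in I(n,e,k)} \langle D, v' \rangle v'$. Part (1) then reduces to verifying $\sum_{v' \in I(n,e,k)} \langle p_b^* v_m, v' \rangle v' = \langle v_m, v_k \rangle \, p_b^* v_k$ for each basis vector $v_m$. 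Expanding $p_b^* v_m = \sum_{v'' \in I(n,e,m)} v''$ and applying Lemma \ref{vij}, the verification splits into three cases: $m = k$ (each $\langle v', v'\rangle = -2$); $|m - k| \geq 2$ (all cross-pairs have disjoint non-adjacent supports, hence vanish); and $m = k \pm 1$, where there is a unique adjacent-support bijection $v(i, i+e-1) \leftrightarrow v(i+e, i+2e-1)$ between $I(n,e,k)$ and $I(n,e,k+1)$ giving paired inner product $1$ and all other cross-pairs $0$.

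For part (2), by the presentation \eqref{genaffineweyl}--\eqref{braidrelations} of $W_n$, we verify the three Coxeter relations. The order-two relation is immediate. For $|k - l| \geq 2$, the same support analysis shows every pair from $I(n,e,k) \times I(n,e,l)$ is orthogonal, so $R^e_n(s_k)$ and $R^e_n(s_l)$ commute. For the braid relation $(R^e_n(s_k) R^e_n(s_{k+1}))^3 = 1$, use the bijection from part (1) to label $I(n,e,k) = \{\sigma_0,\ldots,\sigma_{e-1}\}$ and $I(n,e,k+1) = \{\tau_0,\ldots,\tau_{e-1}\}$ with $\langle \sigma_s, \tau_{s'} \rangle = \delta_{ss'}$. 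Cross-commutations then yield
\[ R^e_n(s_k)\, R^e_n(s_{k+1}) = \Big(\prod_s s_{\sigma_s}\Big)\Big(\prod_s s_{\tau_s}\Big) = \prod_s (s_{\sigma_s} s_{\tau_s});\]
each factor $s_{\sigma_s} s_{\tau_s}$ has order $3$ since $\langle \sigma_s, \tau_s\rangle = 1$, and distinct factors commute, so the cube is trivial.

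Part (3) is the main obstacle. By the homomorphism property, it suffices to check equality on each generator $s_k \in W_n$. The crux is the inductive claim that for every length-$e$ vector $v(i, i+e-1) \in \V_{ne}$,
\[ R^f_{ne}(s_{v(i, i+e-1)}) = \prod_{s=0}^{f-1} s_{v(fi - s,\, fi + fe - 1 - s)} \quad\text{in } W_{nef}. \]
The base $e = 1$ is the definition of $R^f_{ne}$ on the generator $s_{v_i}$. For the step, use $s_{v(i, i+e-1)} = s_{v_i}\, s_{v(i+1, i+e-1)}\, s_{v_i}$, which holds because $\langle v_i, v(i+1, i+e-1)\rangle = 1$ gives $v(i, i+e-1) = s_{v_i}(v(i+1, i+e-1))$; apply $R^f_{ne}$ and invoke the inductive hypothesis. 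Setting $\sigma_s = v(fi - s, fi + f - 1 - s)$ and $\tau_s = v(fi + f - s, fi + fe - 1 - s)$, a direct endpoint computation gives $\langle \sigma_s, \tau_{s'} \rangle = \delta_{ss'}$ (adjacent for $s = s'$, overlapping or separated by a gap otherwise); the $\sigma_s$'s pairwise share $fi$ in their supports and the $\tau_s$'s share $f(i+1)$, so both families are mutually orthogonal. The commutations reorganize the triple product as $\prod_s (s_{\sigma_s}\, s_{\tau_s}\, s_{\sigma_s}) = \prod_s s_{\sigma_s + \tau_s} = \prod_s s_{v(fi - s, fi + fe - 1 - s)}$, completing the induction. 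Applying the claim to each of the $e$ elements of $I(n,e,k)$ and reparameterizing $j = fi - s$ over the $ef$ pairs $(i, s)$ shows $j$ traverses exactly the set $\{efk - ef + 1, \ldots, efk\}$ indexing $I(n, ef, k)$, yielding $R^f_{ne}(R^e_n(s_k)) = R^{ef}_n(s_k)$. The delicate point throughout is the support/adjacency bookkeeping for vectors $v(i,j)$ needed to decouple the reflections into the commuting $S_3^{e}$-like structure.
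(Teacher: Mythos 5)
Your proposal is correct, but it takes a genuinely different route from the paper's. For Part (1) the paper computes, in Lemma \ref{S}, the action of $R^e_n(s_{v_j})$ on every basis vector $w_i$ of $\V_{ne}$ and then checks compatibility on $p_b^*v_0$, $p_b^*v_{\pm1}$; you instead use the identity $p_b^*v_k=\sum_{v'\in I(n,e,k)}v'$, which together with Lemma \ref{vij}(2) lets you write $R^e_n(s_{v_k})$ as the single map $D\mapsto D+\sum_{v'}\langle D,v'\rangle v'$ and reduces (1) to the inner-product table for interval roots; this decomposition is never stated explicitly in the paper and is a cleaner reduction. For Parts (2) and (3) the paper passes to the cyclic-permutation realization $s_{w_i}\mapsto(i,i+1)$ and uses its locality device (relations involving only a proper subset of the generators are checked inside a symmetric subgroup where the permutation realization is injective), so that the braid relations and the composition law become mechanical permutation computations (Lemma \ref{permutnot}, Lemma \ref{lem:svijperm}); you avoid permutations entirely, getting the braid relation from the pairing $\langle\sigma_s,\tau_{s'}\rangle=\delta_{ss'}$ and the splitting into commuting order-three blocks, and the composition law by induction on interval length via $s_{v(i,i+e-1)}=s_{v_i}s_{v(i+1,i+e-1)}s_{v_i}$ (the left-endpoint variant of Lemma \ref{vij}(4)) followed by the reindexing $j=fi-s$, all of which I checked is sound, including the endpoint bookkeeping showing crossing intervals with no common left or right endpoint are orthogonal. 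What each buys: yours stays inside the reflection representation and root combinatorics and is self-contained; the paper's permutation calculus makes the verifications transparent and produces the explicit permutation form of $R^e_n(s_{v_j})$, which is reused elsewhere (e.g.\ in the proof of Theorem \ref{guti}). Two small caveats, at the same level of rigour as the paper: your identities are verified as isometries of $\V_{ne}$ (resp.\ $\V_{nef}$), so to read them in the abstract group $W_{ne}$ you implicitly use the faithfulness of this reflection action (the paper's locality-plus-permutation argument is its substitute for that point); and the conjugation identity you invoke in Part (3) is exactly what shows $s_{w(i,j)}\in W_{ne}$, which is already needed in Part (2) for $R^e_n$ to land in $W_{ne}$, so it should be cited there as well.
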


\section{Action on fibral divisors}\label{sec:fibral}
 We first
characterize fibres by an universal property involving divisibility,
which allows us to show that an universal Torelli isomorphims
preserves fibres upto a sign. Using this, it can be derived that an
universal Torelli isomorphism preserves fibral divisors. 

\subsection{Structure of N\'eron-Severi group of an elliptic surface}
We recall now some well known facts about the N\'eron-Severi group of an
elliptic surface. Under our hypothesis on the $j$-invariant
of $E$, it is known by    N{\'e}ron's  theorem of the base (\cite{LN},
\cite[Theorem 1.2]{Sh}), 
  that $NS(X)$ is a finitely
generated abelian group. For elliptic surfaces, this can also be
proved directly using the cycle class map and that  algebraic and
numerical equivalence coincides on an elliptic surface (\cite[Lecture
VII]{M}, \cite[Section 3]{Sh}). From this last fact,  it also
follows that $NS(X)$ is torsion-free. Further by Hodge index theorem, 
the intersection pairing is a non-degenerate pairing 
on  $NS(\bar{X})\otimes \R$ of signature $(1, \rho-1)$ where $\rho$ is
the rank  of  $NS(\bar{X})$.

Fix a `zero' section $O$ of $\pi:X\to C$.  
Let $T(\bar{X})$ denote the `trivial' sublattice 
of  $NS({\bar{X}})$, i.e., the subgroup generated by the
zero section $O$ and the irreducible components of the fibers of $\pi$. 
By decomposing divisors into `horizontal' sections and `vertical'
fibers, there is an exact sequence 
\[ 0\to T(\bar{X}) \to  NS({\bar{X}}) \to \bar{Q}\to 1.\]
Let $K$ (resp. $K'$) 
 be the function field of $C$ over $K$ (resp. ${\bar {k}}$). Let $E$
be the generic fibre of $\pi~:~X\to C$. This is an elliptic curve
defined over $K$, with
origin $0\in E(K)$  defined by the intersection of the section $O$ with $E$. 

Since $\pi$ is proper, the group $E(K)$ is canonically
identified with the group of sections of $\pi:X\rightarrow C$.  
Denote by
 $(P)$ the image in $X$ of the 
section of $\pi$ corresponding to a rational element
$P ~\epsilon ~E(K)$, and by
by $D(P)$ the divisor $(P)-(0)$ on ${{X}}$. Let 
$T(X)=NS(X)\cap T(\bar{X})$ be
the trivial sublattice of $NS(X)$. We have the following 
description of the Mordell-Weil lattice of the generic fibre due to 
Shioda and Tate, which for lack of a reference, we indicate
the proof over arbitrary base fields: 
\begin{proposition}\label{mwgroup}
The section map $sec: P\mapsto (P) ~\mbox{mod} ~T(X)$ gives an
identification of the Mordell-Weil group $E(K)$ of the generic fibre
$E$ with the quotient group $Q=NS(X)/T(X)$. 
\end{proposition}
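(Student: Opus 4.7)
The plan is to run the classical Shioda--Tate argument, restricting divisors to the generic fibre $E$ and identifying Mordell--Weil points from the restriction. First, the map $sec$ is well defined: since $\pi$ is proper and $C$ is regular of dimension one, every $K$-rational point $P$ of $E$ extends uniquely to a section $\sigma_P \colon C \to X$, whose image $(P)$ is a prime horizontal divisor on $X$ defined over $k$, yielding a class in $NS(X)/T(X)$.

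For the homomorphism property, fix $P, Q \in E(K)$ with sum $R = P+Q$. By the definition of the group law on $E$ via ${\rm Pic}^0(E) = E$ and Abel--Jacobi, $[P]+[Q] = [R]+[O]$ in ${\rm Pic}(E)$, so $(R)+(O)-(P)-(Q)$ restricted to $E$ is principal. Extending a defining rational function to $X$, which is possible since $X$ is normal with function field $K(E)$, shows that on $X$ the divisor $(R)+(O)-(P)-(Q)$ differs from a principal divisor only by a vertical divisor supported on fibres. Hence this combination lies in $T(X)$ modulo algebraic equivalence, and since $(O) \in T(X)$ we conclude $sec(P+Q) \equiv sec(P) + sec(Q) \pmod{T(X)}$.

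Surjectivity and injectivity both proceed through restriction to $E$. For surjectivity, given $[D] \in NS(X)$ lift to a divisor $D$, restrict to the generic fibre, and decompose $D|_E = ([P']-[O]) + n[O]$ via ${\rm Pic}(E) = E(K) \oplus \Z \cdot [O]$; then $D - (P') - (n-1)(O)$ restricts to zero on $E$, is linearly equivalent to a divisor supported on fibres, and so $[D] \equiv (P') \pmod{T(X)}$. For injectivity, if $(P) \in T(X)$, restriction to $E$ annihilates the fibral part and gives $[P] = n[O]$ in ${\rm Pic}(E)$; comparing degrees forces $n=1$, so $[P] = [O]$ and $P = O$ in $E(K)$.

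The main subtlety, and the step that makes the statement valid over a general base field $k$, is reconciling the restriction argument---naturally phrased in ${\rm Pic}(X)$---with the quotient $NS(X) = {\rm Pic}(X)/{\rm Pic}^0(X)$. The key observation is that for $p \in C$ the pullback $\pi^*[p]$ is the class of the full fibre $\pi^{-1}(p)$, a non-negative integer combination of fibral components, which already lies in the subgroup of ${\rm Pic}(X)$ generated by $(O)$ and fibral curves. For a non-isotrivial relatively minimal elliptic surface with a section, ${\rm Pic}^0(X) = \pi^* {\rm Pic}^0(C)$, so the entire ${\rm Pic}^0(X)$ is absorbed into the kernel of the quotient, and the restriction map descends to the desired isomorphism $NS(X)/T(X) \cong E(K)$. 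The argument is valid over $k$ rather than merely $\bar{k}$ because each $(P)$ with $P \in E(K)$ is $k$-rational, $T(X)$ is defined as $NS(X) \cap T(\bar{X})$, and $NS(X)$ is by definition the image of ${\rm Pic}_{X/k}(k)$.
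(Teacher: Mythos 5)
Your argument is correct in substance, but it takes a genuinely different route from the paper. The paper does not reprove Shioda--Tate at all: it quotes Shioda's Theorem 1.3 for the statement over $\bar{k}$ and then adds a short Galois-descent step, writing a class $D\in NS(X)\subset NS(\bar X)$ uniquely as $(P)+V$ with $P\in E(\bar k(C))$, $V$ in the trivial lattice over $\bar k$, and using $G_k$-invariance of $D$ and of $T(\bar X)$ to conclude $P\in E(K)$; injectivity over $k$ is inherited from injectivity over $\bar k$. You instead reprove the geometric theorem directly over $K$ by restricting to the generic fibre, which is self-contained and avoids a separate descent step, but it makes you responsible for two inputs that the paper's citation black-boxes: (i) that every class in $NS(X)$ (defined as the image of ${\rm Pic}_{X/k}(k)$) is represented by an honest divisor on $X$ defined over $k$ --- this is where the standing hypotheses enter, since the existence of a section and of a $k$-rational point of $C$ give $X(k)\neq\emptyset$ and hence ${\rm Pic}(X)\xrightarrow{\sim}{\rm Pic}_{X/k}(k)$, a point you gesture at but do not justify; and (ii) that ${\rm Pic}^0(X)=\pi^*{\rm Pic}^0(C)$, so that restriction to the generic fibre factors through $NS(X)$ --- true here because non-isotriviality forces $\chi(\mathcal{O}_X)>0$, hence $H^0(C,R^1\pi_*\mathcal{O}_X)=0$ (with a little extra care about ${\rm Pic}^0_X$ in positive characteristic, e.g.\ via $b_1(X)=2g(C)$), but again asserted rather than proved. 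The trade-off is clear: your route gives a uniform proof over $k$ with the Galois theory hidden in the $K$-rationality of the points and divisors used, while the paper's route is shorter and isolates exactly the arithmetic content (Galois invariance of the Shioda--Tate decomposition) needed to pass from $\bar k$ to $k$. Either way the proposition stands.
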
 
\begin{proof} Over $\bar{k}$, this is Theorem
  1.3 in (\cite{Sh}). Let $G_k$ be the Galois group of $\bar{k}$ over
  $k$. Since ${\rm Pic}(X)(k)={\rm
    Pic}(X)(\bar{k})^{G_k}$, $NS(X)$ defined as the image of ${\rm
    Pic}(X)(k)$ in $NS(\bar{X})$ is $G_k$-invariant. 
Given a divisor $D\in NS(X)$, it can be written
  uniquely in $NS(\bar{X})$ as $(P)+V$, for some $P\in E(K')$ and $V\in
  T(X)$.
  The Galois invariance of $D$ by $G_k$
  implies that $P$ is $G_k$-invariant and hence belongs to
  $E(K)$. Hence the section map $sec$ is surjective. Since $sec$ is
  injective over $\bar{k}$, it  is injective (over $k$), 
and this proves the proposition. 
\end{proof}

\subsection{Euler characteristics.}
We recall some facts about the Euler characteristics of (semistable) elliptic
surfaces.
Let $\chi^t(X)$ denote
the (topological) Euler characteristic of $X$, given by the alternating
sum of the $\ell$-adic betti numbers.  Suppose that $S$ is the ramification
divisor of $\pi$, and the singular fibre at $t\in S$ is of type
$I_{n_t}$. It is known (\cite[Corollary 6.1]{SS}), 
that $12\chi^(X)=\sum_{t\in S} n_t$.

Let $\O_X$ denote the structure sheaf of $X$, 
and $\chi(X)$ the (coherent) Euler characteristic 
$\chi(X)=\sum_{i=0}^2(-1)^i{\rm dim}(H^i(X,\O_X))$. These two Euler
characteristics are related by the formula $\chi^t(X)=12\chi(X)$.
In particular, this shows that $\chi(X)$ is always positive.

Further, if
$(P)$  is any section, then the self-intersection number $(P)^2=-\chi(X)$
(\cite[Corollary 6.9]{SS}).

\subsection{Universal isometries preserve fibres}
In this section, our aim is to show that an universal isometry 
preserves the fibre upto a sign. Since $C$ has a point defined over
$k$, the class of the fibre is in $NS(X)$. 
\begin{definition} Suppose $L$ is a lattice, a finitely generated free
abelian group. An element $l\in L$ is said to be divisible by a natural 
number $d$ if $l\in dL$. 

Equivalently, the coefficients with respect to any integral basis of $L$ 
are divisible by $d$. 
\end{definition}
Given a map $b:B\to C$ of degree $d$, the divisor $p_b^*(F)=dF_b$, 
where $F$ (resp. $F_b$) 
is the divisor corresponding to a fibre of $X\to C$ (resp. $X\to B$).

\begin{proposition}\label{fibre} 
Let $\Phi: UNS(\cE)\to UNS(\cE')$ be a 
universal isometry between 
universal Torelli datum corresponding to two semistable elliptic surfaces 
$\cE: X\xrightarrow{\pi} C$ and $\cE': X'\xrightarrow{\pi'} C$ over a
field $k$. 

Let $F$ (resp. $F'$)  denote the class in the N\'eron-Severi group 
$NS(X)$ (resp. $NS(X')$) corresponding to the fibre $\pi^{-1}(s)$
 (resp.  $\pi'^{-1}(s)$), for some $k$-rational point $s\in C(k)$. 
Then
\[ \Phi(F)=\pm F'.\]
\end{proposition}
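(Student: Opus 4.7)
\emph{Plan.} The idea is to characterize $\pm F \in NS(X)$ intrinsically in terms that are manifestly preserved by any universal isometry, and then apply this characterization to $\Phi(F) \in NS(X')$. The starting observation is that since $p_b^*(F) = d F_b$ whenever $\deg(b) = d$, the class $F$ has pullbacks divisible by arbitrary integers under base change. The claim is that this divisibility property, together with primitivity, singles out $\pm F$:

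\emph{Claim.} A primitive class $D \in NS(X)$ satisfies $p_b^*(D) \in d \cdot NS(X_b)$ for every $b \in \cB_C$ with $\deg(b) = d$ if and only if $D = \pm F$.

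To prove the nontrivial direction, write $p_b^*(D) = d D_b$ and use $\langle p_b^* x, p_b^* y\rangle = d \langle x, y\rangle$ to compute
\[
d\langle D, D\rangle = \langle p_b^* D, p_b^* D\rangle = d^2 \langle D_b, D_b\rangle, \qquad
d\langle D, F\rangle = \langle p_b^* D, p_b^* F\rangle = d^2 \langle D_b, F_b\rangle.
\]
Hence $d$ divides both $\langle D, D\rangle$ and $\langle D, F\rangle$. Since base changes of $C$ of arbitrarily large degree exist, $D^2 = 0$ and $\langle D, F\rangle = 0$. By the Hodge index theorem the pairing on $NS(X)\otimes \R$ has signature $(1, \rho-1)$, in which no two linearly independent isotropic vectors can be orthogonal; therefore $D$ is proportional to $F$ in $NS(X)\otimes\R$. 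Since $F$ is primitive in $NS(\bar X)$ (being the class of a smooth geometric fiber, a prime divisor) and hence in $NS(X)$, and $D$ is primitive, we conclude $D = \pm F$.

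With the claim in hand, the proposition follows by a diagram chase. Setting $\phi := \phi_{\mathrm{id}_C}$, the commutativity in the definition of a universal isometry, applied with outer base change the identity, yields $\phi_a \circ p_a^* = p_a^* \circ \phi$ for every $a \in \cB_C$. Therefore
\[
p_a^*(\phi(F)) = \phi_a(p_a^* F) = \phi_a(d F_a) = d\, \phi_a(F_a) \in d \cdot NS(X'_a),
\]
so $\phi(F)$ satisfies the divisibility property of the claim on the $\cE'$ side. Being a lattice isomorphism, $\phi$ preserves primitivity, so $\phi(F)$ is primitive. Applying the characterization to $NS(X')$ yields $\Phi(F) = \phi(F) = \pm F'$.

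The main obstacle is the characterization step. The delicate point is to extract from a purely arithmetic-looking condition (divisibility after every base change) both geometric consequences $D^2 = 0$ and $\langle D, F\rangle = 0$; once this is done, the Hodge signature collapses $D$ onto the isotropic line $\Z F$, and the universal-isometry compatibility does the rest.
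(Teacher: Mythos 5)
Your argument is correct in substance, and it runs on the same fuel as the paper's proof: the relation $p_b^*F=dF_b$ for base changes of arbitrarily large degree, the Hodge index theorem, and primitivity of the fibre class. The packaging is different. The paper decomposes $NS(X')$ against the unimodular sublattice $[O',F']$, writes $\Phi(F)=eO'+fF'+D'$ with $D'\in[O',F']^{\perp}$, kills $e$ using the divisibility of $p_b^*(\Phi(F))$ by arbitrary $d$, kills $D'$ by negative definiteness of $[O',F']^{\perp}$, and gets $f=\pm1$ by completing $F$ to a basis. You instead characterize $\pm F$ intrinsically by the divisibility property, extract $d\mid D^2$ and $d\mid\langle D,F\rangle$ from the pullback formula, and then use that a form of signature $(1,\rho-1)$ contains no two-dimensional totally isotropic subspace to force proportionality. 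What your version buys is that the section enters nowhere in the decomposition; note also that the isotropy $\Phi(F)^2=F^2=0$ is immediate from $\phi$ being an isometry, so the base-change divisibility is really only needed for the orthogonality $\langle\Phi(F),F'\rangle=0$.

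Two repairs are needed, both minor. First, your justification of primitivity, ``$F$ is primitive because it is the class of a prime divisor,'' is not a valid principle: a smooth conic in $\mathbb{P}^2$ is a prime divisor whose class is $2H$. The correct (and immediate) reason is that $\langle F,O\rangle=1$ for the zero section $O$, which forces $F$ and $F'$ to be primitive, and then $\phi(F)$ is primitive because $\phi$ is an isomorphism of lattices; this is exactly the input the paper uses when it completes $F$ to a basis of $NS(X)$. Second, and relatedly, your step ``$\phi$ preserves primitivity'' uses that the maps $\phi_b$ are bijective, not merely pairing-preserving injections; this is the intended reading of ``isometry'' here and is used by the paper as well, but it is worth saying explicitly since your characterization argument collapses without it.
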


\begin{proof} 
The subspace  $[O,F]$ of $NS(X)$ generated by  a section 
$O$ and the fibre $F$ is isomorphic to (\cite[Section 8.6]{SS}),
\[ [O,F]=\begin{cases} [1]\oplus [-1] \quad \text{if $\chi(X)$ is odd}\\
\begin{pmatrix} 0 & 1\\ 1&0 \end{pmatrix} \quad \text{if $\chi(X)$ is even}.
\end{cases}
\]
Since $[O,F]$ is unimodular, there is a direct sum
decomposition, 
\[ NS(X)= [O,F]\oplus [O,F]^{\perp}.\]
From the Hodge index theorem, it follows that $ [O,F]^{\perp}$ is
negative definite. 

Write $ \Phi(F)=eO'+fF'+D',$ where $D'$ is orthogonal to
  both $O'$ and $F'$. Let $b:B\to C$ be a map of degree $d$.
The divisor $p_b^*(F)=dF_b$, 
where $F$ (resp. $F_b$) 
is the divisor corresponding to a fibre of $X\to C$ (resp. $X\to B$).
The pull-back divisor $p_b^*D'$ continues to be
  orthogonal to $O'_b=p_b^*O'$ and the fibre $F'_b$ of $X'_b\to B$,
  since $dF'_b=p_b^*F$. 

  We have $p_b^*F=dF_b$, where $F_b$ is a fibre of $X_b\to B$.
Since $\Phi$ is universal,
the divisor
\[\Phi(p_b^*F)=p_b^*(\Phi(F))=eO'_b+fdF'_b+p_b^*D',\]
is also divisible by $d$ in $NS(X')$. As $d$ can be
arbitrarily chosen, it follows that $e=0$. Then, 
\[0= F^2=\Phi(F^2)= D'^2.\]
The intersection pairing is definite on $[O',F']^{\perp}$. Hence 
$D'=0$, and $\Phi(F)=fF$ for some integer $f$. 

Since $\{O,F\}$ forms a basis of the unimodular subspace $[O,F]$, $F$ 
 can be completed to a basis of $NS(X)$. The map $\Phi$ being
an isometry,  $f=\pm 1$ and this proves the proposition. 
\end{proof}

\subsection{Preservation of fibral divisors}
Our aim now is to show that
$\Phi$ preserves the space of fibral divisors. 

\begin{proposition} \label{fibrepreserve}
Let $\cE: X\xrightarrow{\pi} C$ and 
$\cE: X\xrightarrow{\pi'} C$ be semistable
  elliptic surfaces over $k$, and $\Phi: UNS(\cE)\to UNS(\cE')$
 be an universal isometry between the
  universal Torelli data of $\cE$ and  $\cE'$. 

Then $\Phi$ preserves the space of fibral divisors. 
\end{proposition}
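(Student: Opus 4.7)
My plan is to propagate Proposition \ref{fibre} to every base change and then exploit the Galois-equivariance coming from universality, together with the Shioda--Tate decomposition, to separate the fibral sublattice from the Mordell--Weil part of $F^\perp$.

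First I extend Proposition \ref{fibre} to all $b \in \cB_C$. With $d = \deg b$, the identity $p_b^* F = d F_b$ and the commutativity $\phi_b \circ p_b^* = p_b^* \circ \Phi$ give
\[ d\, \phi_b(F_b) \;=\; p_b^*(\Phi(F)) \;=\; \epsilon\, d\, F'_b, \]
so $\phi_b(F_b) = \epsilon F'_b$ with the same sign $\epsilon \in \{\pm 1\}$ for every $b \in \cB_C$; in particular $\phi_b$ carries $F_b^\perp$ isomorphically onto $(F'_b)^\perp$. Next I invoke the Shioda--Tate orthogonal decomposition
\[ NS(X)\otimes\Q \;=\; [O,F]\otimes\Q \;\oplus\; \mathrm{Fib}'(X)\otimes\Q \;\oplus\; \mathrm{MW}(X)\otimes\Q, \]
where $\mathrm{Fib}'(X) \cong \bigoplus_t A_{n_t-1}$ and $\mathrm{MW}(X)$ is the image of Shioda's projection $\varphi$. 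This realises $\mathrm{Fib}(X)\otimes\Q = \Q F \oplus \mathrm{Fib}'(X)\otimes\Q$ as a specific subspace of $F^\perp\otimes\Q$, orthogonal to $\mathrm{MW}(X)\otimes\Q$.

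The main idea is then to pass to the Galois reformulation of Theorem \ref{uett-galois}, in which $\Phi$ becomes a $G(\bar{K}_s/K)$-equivariant isometry of $\overline{NS(X)/C}$ with $\overline{NS(X')/C}$. The direct-limit lattices carry compatible Shioda--Tate decompositions $\overline{[O,F]/C} \oplus \overline{\mathrm{Fib}(X)/C} \oplus \overline{\mathrm{MW}(X)/C}$, and these three summands carry mutually inequivalent Galois representations: $\overline{\mathrm{MW}(X)/C}$ matches the module $E(\bar{K}_s)\otimes\Q$ with its $G(\bar{K}_s/K)$-action, whereas $\overline{\mathrm{Fib}(X)/C}$ decomposes, under the decomposition group at each $t \in S$, into blocks supported over the inertia-orbits of the irreducible components, on which inertia acts by the unipotent Picard--Lefschetz monodromy of the semistable fibre. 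Because $\Phi$ commutes with the full Galois action at every place, it must preserve each Galois-isotypic summand of this decomposition; in particular it preserves $\overline{\mathrm{Fib}(X)/C}$, and specialising to $b = \mathrm{id}$ gives $\Phi(\mathrm{Fib}(X)\otimes\Q) = \mathrm{Fib}(X')\otimes\Q$.

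The main obstacle is making the Galois-isotypic comparison rigorous: one must verify that the inertia action at each singular point is strong enough to separate the fibral and Mordell--Weil components, and that Shioda's projection $\varphi$ is compatible with pullback across the tower of base changes so that the three Shioda--Tate summands assemble Galois-equivariantly in the limit. A fallback strategy, closer in spirit to the proof of Proposition \ref{fibre}, is an intrinsic boundedness criterion: for $D \in F^\perp \otimes \Q$, fibral intersections $D \cdot ((P)_b - (O)_b)$ take only finitely many integer values, determined by the combinatorics of which components of singular fibres a section meets, whereas for $D$ with nontrivial Mordell--Weil component $\varphi(Q)$ the Shioda height formula reduces these intersections to $(\text{bounded}) - \langle Q, P\rangle_b$, which becomes unbounded as $P$ ranges over $E(K_B)$ with $B$ taken sufficiently large (using Lang--N\'eron and nondegeneracy of the height pairing). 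This boundedness condition is transported faithfully by the isometry $\Phi$ through the identity $\Phi(D) \cdot ((P')_b - (O')_b) = D \cdot \phi_b^{-1}((P')_b - (O')_b)$, yielding the desired invariance.
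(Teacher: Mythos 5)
Your primary argument hinges on a misconception about the local Galois action. For a split multiplicative fibre of type $I_n$ (which is what ``semistable'' means in this paper), inertia acts \emph{trivially} on the fibral sublattice: the components $v^t_0, \ldots, v^t_{n_t-1}$ are already defined over the local ring, and the decomposition group at most permutes the places of $B$ lying over a given place of $C$. The unipotent Picard--Lefschetz monodromy you invoke is the inertia action on the $\ell$-adic Tate module of the generic fibre (equivalently, on $H^1$ of a nearby smooth fibre), not on the span of the components of the singular fibre inside $NS(X_b)$. Since both the fibral and Mordell--Weil summands of $F^\perp\otimes\Q$ are Galois representations factoring through finite quotients with trivial inertia in the relevant sense, there is no ``Galois-isotypic'' separation of the kind you propose, and the step concluding $\Phi\bigl(\overline{\mathrm{Fib}(X)/C}\bigr)=\overline{\mathrm{Fib}(X')/C}$ does not follow from equivariance alone.

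Your fallback---characterising fibral classes in $F^\perp$ by the boundedness of their intersections with the family of all sections over all base changes, using the Shioda height formula, Lang--N\'eron, and nondegeneracy of the height pairing---is a genuinely different and potentially workable idea, but as written it is only a sketch, and several compatibilities (the normalised pairing across the tower, the growth of $E(K_B)$, and the behaviour of Shioda's projection under $p_b^*$) would have to be pinned down. The paper's actual proof is direct and much shorter: for an irreducible fibral component $v$ one writes $\Phi(v)=(P')-(O')+r(O')+V'+sF'$ with $V'$ fibral and avoiding the zero components, shows $r=0$ by intersecting with $F'$, proves the combinatorial inequality $V'^2+2(P')V'\le 0$ from the chain structure of an $I_n$ fibre, and combines this with $(P')^2=(O')^2=-\chi(X')$ and $(O')V'=0$ to get $\Phi(v)^2\le -2\chi(X')$; since $\Phi(v)^2=v^2=-2$, this forces $(P')=(O')$ once $\chi(X')>1$ (and one base-changes to arrange $\chi(X'_b)>1$ if needed). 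You should either complete the boundedness argument carefully or adopt this direct computation.
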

\begin{proof}
By the foregoing lemma, we assume after multiplying by the universal
isometry $-Id$ if
required, that the universal isometry $\Phi$ preserves fibres:
$\Phi(F_b)=F'_b$ for any $b\in \cB$. 
Let $v$ be an irreducible component of a singular fibre. Write, 
\[ \Phi(v)=(P')-(O')+r(O')+V'+sF',\]
where $r, ~s\in \Z$,
$V'$ is a fibral divisor whose support does not contain the zero 
component of any singular fibre i.e., the irreducible
component of the singular fibre intersecting non-trivially the zero section.
We have $(O')V'=FV'=0$. Now, 
\[ \Phi(v)F'= \left((P')-(O')+r(O')+V'+sF'\right)F'=r.\]
Since  \[\Phi(v)F'=\Phi(v)\Phi(F)=\Phi(vF)=0, \]
it follows that $r=0$. 

Let $S'\subset C(k)$ be the
ramification locus of $\pi'$, and for a point $s\in S'$, let $V'_s$ be
$s$-component of $V'$. We now claim that $V'^2+2(P')V'\leq 0$.
For this, it is sufficient to
show that $V_s'^2+2(P').V_s'\leq 0$ for any $s\in S'$.  

Let the fibre of $\pi'$ at $s$ be  of type $I_n$.
 Let $v_0',\cdots, v_{n-1}'$ be the
irreducible components of the fibre at $s$ written in the standard
notation, where $v'_0$ is the component meeting the section $O'$. 
We use cyclic notation (congruence modulo $n$) 
and define $v'_n=v'_0$. 
Write $V_{s}'=a_1v_1'+\cdots+a_{n-1}v_{n-1}'$, for some $a_i\in
\Z$. Here $a_0=a_n=0 $ by assumption on $V'$. 
Suppose  $(P')$ intersects the component  $V_{s}'$ at the component
$v_j$.  Then $2(P')V_{j}'=2a_j$. 

We have, 
\[
V_s'^2=-2\sum_{i=0}^{n-1}a_i^2+2\sum_{i=0}^{n-1}a_ia_{i+1}
=-\sum_{i=0}^{n-1}(a_i-a_{i+1})^2.
\]
The integers $a_1-a_0, \cdots, a_j-a_{j-1}$ and
$a_j-a_{j-1},\cdots, a_{n-1}-a_{n}$ give a partition of $a_j$. 
Hence 
\[ \sum_{i=0}^{j}(a_i-a_{i+1})^2\geq \sum_{i=0}^{j}|a_i-a_{i+1}|\geq |a_j|.\]
Similarly, $\sum_{i=j}^{n}(a_i-a_{i+1})^2\geq |a_j|$. 
Hence, we get that
\[ V_s'^2+2(P')V_s'=-\sum_{i=0}^{n-1}(a_i-a_{i+1})^2+2a_j\leq 0.\]
This proves that $V'^2+2(P')V'\leq 0$.
Suppose now $P'\neq O'$. Then, 
\[\begin{split}
 -2=v^2= \Phi(v)^2& =(P')^2+(O')^2-2(P')(O')+V'^2+2(P')V'-2(O')V'\\
&\leq -2\chi(X').
\end{split}
\] 
This yields a contradiction when $\chi(X')>1$, and proves the
proposition in this case. 

Suppose now that $\chi(X')=1$. Consider a base change $b: B\to C$ such that
$\chi(X'_b)>1$. By the above argument, 
the base change of $\Phi(v)= (P')-(O')+V'+sF'$ to
$X'_b$ is fibral. This implies that $(P')=(O')$, i.e., 
there is no sectional component and this proves the proposition. 
\end{proof}

\subsection{Isomorphism of singular fibres}
We now show that a universal Torelli isomorphism preserving fibral
divisors yields an `identification'  of the singular fibres of $X$ and
$X'$. Let $\pi: X\to C$ be a (split) semistable
  elliptic surface over $k$. Suppose  $t\in C(k)$ belongs to the
  singular locus of $\pi$ and the Kodaira type of the fibre is $I_n$. 
For $n\geq 3$, the subgroup $NS(X_t)$ of  $NS(X)$ generated by the irreducible
components of the singular fibre $X_t$ of $\pi$ at $t$, is isomorphic
to the affine root lattice of type $\tilde{A}_{n-1}$. When $n=1$, the
fibre $X_t$ is a nodal curve, and 
$NS(X_t)\simeq \Z$ with trivial intersection pairing. 

\begin{proposition}\label{singlocus}
 Let $\cE: X\xrightarrow{\pi} C$ and 
$\cE: X\xrightarrow{\pi'} C$ be semistable
  elliptic surfaces over $k$, and $\Phi: UNS(\cE)\to UNS(\cE')$
 be an universal isometry between the
  universal Torelli data of $\cE$ and  $\cE'$.  

 Let $S$ (resp. $S'$) be the places of $C(k)$ such
that the fibre $X_t$ (resp. $X'_t$) for $t\in S$ 
(resp. $t\in S'$) is
singular. Then, $S=S'$, and for each $t\in S$ and  
$\Phi$ restricts to an isomorphism $NS(X_t)\to NS(X'_t)$. 
\end{proposition}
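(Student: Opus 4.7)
The plan is to pass to fibral lattices modulo the class of a fibre, reducing to a question about decomposing a negative-definite lattice into indecomposable summands, and then to use universality under ramified base changes to pin down individual singular points of $C$. By Proposition~\ref{fibre}, composing $\Phi$ with $-\mathrm{Id}$ if necessary I may assume $\Phi(F)=F'$; Proposition~\ref{fibrepreserve} then says that $\Phi$ carries $\mathrm{Fib}(X)\subset NS(X)$ isomorphically onto $\mathrm{Fib}(X')\subset NS(X')$, and dividing out by $F$ and $F'$ produces an isometry $\bar{\Phi}:\mathrm{Fib}(X)/\Z F\xrightarrow{\sim}\mathrm{Fib}(X')/\Z F'$ of negative-definite lattices (by the Hodge index theorem). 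Each side decomposes orthogonally as $\bigoplus_{t:n_t\geq 2}NS(X_t)/\Z F$, with each $NS(X_t)/\Z F\cong A_{n_t-1}$ an irreducible negative-definite root lattice (fibres of type $I_1$ contribute nothing modulo $F$).

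By the uniqueness of Krull--Schmidt decomposition for negative-definite lattices, $\bar{\Phi}$ permutes the $A_{n_t-1}$ summands while preserving their ranks, immediately yielding equality of the multisets of singular fibre indices $\{n_t:t\in S,\,n_t\geq 2\}=\{n'_{t'}:t'\in S',\,n'_{t'}\geq 2\}$. To upgrade this to the pointwise statements $S=S'$ and $n_t=n'_t$, for each $t\in C(k)$ I would construct a finite separable cover $b:B\to C$ in $\cB_C$, with $B$ geometrically integral, that is totally ramified at a single preimage $t'\in B$ of $t$ of some degree $e>\max\{n_s,n'_{s'}\mid s\in S,\,s'\in S'\}$. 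On $X_b$ the fibre over $t'$ is of Kodaira type $I_{n_te}$ (with the convention $n_t=0$ for good reduction), contributing a summand $A_{n_te-1}$ whose rank strictly exceeds that of every other indecomposable summand on either side. Applying the same analysis to $\phi_b$ forces this summand to be matched to one of equal rank on the $X'_b$-side, which can only come from the fibre of $X'_b$ over $t'$; so $n_te=n'_te$, and hence $n_t=n'_t$ and $S=S'$. Finally, to conclude that $\Phi$ itself sends $NS(X_t)$ into $NS(X'_t)$, take an irreducible component $v$ of the fibre $X_t$ with $v^2=-2$ and suppose for contradiction that $\Phi(v)\in NS(X'_{t^*})$ with $t^*\neq t$; universality $\phi_b\circ p_b^*=p_b^*\circ\Phi$ then forces $\phi_b(p_b^*v)$ to lie simultaneously in $NS(X'_{b,t'})$ (by the matching above) and in the fibral sublattice over $b^{-1}(t^*)\not\ni t'$, so $\phi_b(p_b^*v)\in\Z F'_b$, contradicting $(p_b^*v)^2=-2e\neq 0$ from~\eqref{pullbackns}.

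The main obstacle I anticipate is the construction of the base change $b$ with the desired ramification profile inside $\cB_C$ over an arbitrary field $k$ of the allowed kinds (characteristic zero or finitely generated over its prime field); this may require Kummer- or Artin--Schreier-type constructions and some care to ensure that $B$ is geometrically integral. Once $b$ is in hand the rest of the argument is routine bookkeeping with Krull--Schmidt uniqueness for negative-definite lattices.
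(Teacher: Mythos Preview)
Your approach is correct and is essentially the paper's: take a base change $b$ of degree $d$ larger than all the $n_s,n'_{s'}$, totally ramified over the chosen point $t$ and---crucially, though you only allude to this as part of the ``desired ramification profile''---\emph{unramified over every other point of $S\cup S'$}, so that the $A_{n_td-1}$ summand above $t$ strictly dominates every other indecomposable summand on both sides and is forced to match under the isometry. Your passage to $\mathrm{Fib}/\Z F$ and explicit use of Krull--Schmidt make precise what the paper's one-line appeal to ``no isometries between $\tilde A_n$ and $\tilde A_m$ for $n\neq m$'' leaves implicit, and your final contradiction argument supplies the step $\Phi(NS(X_t))\subset NS(X'_t)$ that the paper's terse proof does not spell out.
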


\begin{proof} 
Let $N$ be the supremum over the natural numbers $n$ such that the
singular fibres of $\pi$ and $\pi'$ are of type $I_n$. Suppose that 
the singular fibre of $\pi$ (resp. $\pi'$) at $t\in S\cup S'$ is of type 
$I_{m}$ (resp.$I_{m'}$) with  $m, m'\geq 0$.
Choose a
degree $d>N$ morphism $B\to C$ which is totally ramified at $v$ and unramified
at all other points of $S\cup S'$. Let $z\in B(k)$ map to $t$. 
The fibre at $z$ of $\pi$ (resp. $\pi'$) is of Kodaira type $I_{md}$
(resp. $I_{m'd}$),  and at
other points of $S\cup S'$ it remains unchanged. 

By Proposition \ref{fibrepreserve}, $\Phi$ sends fibral divisors to
fibral divisors. 
Since there are no isometries between the root systems $\tilde{A_n}$
and $\tilde{A}_m$ for $n\neq m$, and $d>N$
it follows that $m=m'$. Applying the same argument to 
all points $t\in S\cup S'$ the proposition follows.
\end{proof}

\section{Isogeny of generic fibres}\label{sec:isogeny}
In this section, we show under the hypothesis of Theorem \ref{utt},
that the generic fibres of $\pi$ and $\pi$' are isogenous over a
finite extension of $k(C)$, by invoking the validity
of the Tate isogeny conjecture for elliptic curves under our
hypothesis on $k$. 
 
\begin{proposition} \label{isogeny} Let $k$ be a field of
characteristic zero or finitely generated over its prime field, and
$\cE: X\xrightarrow{\pi} C, ~\cE': X'\xrightarrow{\pi'} C$ be
semistable elliptic surfaces over $k$.  Let $\Phi: UNS(\cE)\to
UNS(\cE')$ be a  universal isometry.

For each rational prime $\ell$ coprime to the characteristic of $k$,
there is an isogeny  $\psi_{\ell} :E\to E'$, defined  over $k(C)$
when $k$ is finitely generated over its prime field  and over
$\bar{k}(C)$ when $k$ is an arbitrary field of characteristic zero,
such that $\psi_{\ell}$ is an isomorphism on the $\ell$-torsion of
$E$.  
\end{proposition}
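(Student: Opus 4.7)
The plan is to use the universal isometry $\Phi$ to manufacture a Galois-equivariant isomorphism of $\ell$-adic Tate modules $T_\ell E \cong T_\ell E'$, and then invoke the Tate isogeny theorem for elliptic curves over finitely generated fields. First I would choose a tower of finite separable base changes $b_n \colon B_n \to C$ (defined over finite extensions $l_n/k$) such that $E[\ell^n]$ and $E'[\ell^n]$ become rational over $l_n(B_n)$ for each $n$. The $\ell^n$-torsion then appears as sections of $\pi_{b_n}$ and $\pi'_{b_n}$, and by the Shioda--Tate identification (Proposition \ref{mwgroup}), these sections correspond to classes in $NS(X_{b_n})/T(X_{b_n}) \cong E(l_n(B_n))$ and similarly for $E'$.

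The main step is to show that $\Phi_{b_n}$ descends to a well-defined group homomorphism $\phi_n \colon E(l_n(B_n)) \to E'(l_n(B_n))$. After replacing $\Phi$ by $-\Phi$ if necessary, Propositions \ref{fibre} and \ref{fibrepreserve} imply that $\Phi_{b_n}$ preserves the sublattice $T_f$ generated by all fibral divisors (fibres and components of singular fibres), and hence induces an isometry of the quotient $NS/T_f$, which fits in an exact sequence $0 \to \Z\cdot O \to NS/T_f \to E(l_n(B_n)) \to 0$. Writing $\Phi_{b_n}((P)) = (P') + (\text{fibral term} + sF' + rO')$ and using the Shioda--Tate identity $(P)+(Q)-2(O) \equiv (P+Q)-(O)$ modulo fibral divisors, the induced map on the Mordell--Weil quotient is seen to be an additive homomorphism. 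The universality of $\Phi$ ensures that the collection $\{\phi_n\}$ is compatible with further base change and hence $G(\bar{K}_s/K)$-equivariant, via the reformulation of Theorem \ref{uett-galois}.

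Restricting $\phi_n$ to torsion yields a Galois-equivariant map $E[\ell^n] \to E'[\ell^n]$, which is injective because $\Phi$ is an isometry, and surjective by applying the same construction to $\Phi^{-1}$ together with the equality of cardinalities $\ell^{2n}$ on each side. Compatibility across the tower produces a Galois-equivariant isomorphism $T_\ell E \cong T_\ell E'$. By the Tate isogeny theorem for elliptic curves (Faltings in characteristic zero and Zarhin in positive characteristic for fields finitely generated over their prime field), the natural map $\mathrm{Hom}(E,E') \otimes \Z_\ell \to \mathrm{Hom}_{G(\bar{K}_s/K)}(T_\ell E, T_\ell E')$ is surjective; approximating our isomorphism modulo $\ell$ by a $\Z$-linear combination of elements of $\mathrm{Hom}(E,E')$ yields an isogeny $\psi_\ell \colon E \to E'$ whose action on $E[\ell]$ agrees with $\phi_1$, and hence is an isomorphism on $\ell$-torsion. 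The field of definition is $k(C)$ in the arithmetic case and $\bar{k}(C)$ in general. The main obstacle is the second paragraph: one must control the ambiguity introduced by $\Phi(O)$, which need not equal $O'$ in the absence of an effectivity hypothesis, and verify that this ambiguity is absorbed in the quotient by $T_f$ together with the remaining $\Z \cdot O$ factor, so that additivity descends to the Mordell--Weil group.
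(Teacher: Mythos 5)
Your strategy coincides with the paper's: identify torsion sections via Shioda--Tate, extract a $G(\bar K_s/K)$-equivariant isomorphism of Tate modules $T_\ell E \to T_\ell E'$, invoke the Tate isogeny theorem, and then approximate the resulting $\Z_\ell$-scalar to realize an actual isogeny that is an isomorphism on $\ell$-torsion. Two remarks on the places where your sketch falls short.

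First, the ``main obstacle'' you flag at the end is dispatched more quickly than you suggest, and by a different mechanism than checking additivity. After replacing $\Phi$ by $-\Phi$ so that $\Phi(F)=F'$, write $\Phi(O)=(P')-(O')+r(O')+V'$ with $V'$ fibral. Since $\Phi(O).F'=\Phi(O).\Phi(F)=O.F=1$, one gets $r=1$, so $\Phi(O)=(P')+V'$ is a section modulo fibral divisors. One then simply \emph{renames} $P'$ to be the zero section of $X'$. With this choice, $\Phi$ carries the full trivial lattice $T(X_b)$ (zero section together with fibral divisors) onto $T(X'_b)$, so the induced map $NS(X_b)/T(X_b)\to NS(X'_b)/T(X'_b)$ is automatically a group isomorphism; there is nothing further to verify about additivity via the Shioda--Tate relation, and the injectivity on torsion you attribute to ``$\Phi$ is an isometry'' is really because $\Phi$ is a bijection identifying the two trivial lattices. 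Your decomposition through an intermediate $\Z\cdot O$ factor adds friction without buying anything.

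Second, and this is a genuine gap, your proof only treats the case $k$ finitely generated over its prime field. The statement also asserts the existence of $\psi_\ell$ over $\bar k(C)$ when $k$ is an arbitrary field of characteristic zero, and there Faltings/Zarhin/Tate are not available. The paper handles this by reducing to $k=\C$ (via the invariance of fundamental groups under extension of algebraically closed fields of characteristic zero) and then invoking Deligne's geometric analogue of the Tate isogeny theorem for abelian schemes over a curve, $\mathrm{Hom}(X\setminus\pi^{-1}(S), X'\setminus\pi'^{-1}(S)) \simeq \mathrm{Hom}(R_1\pi_*\Z, R_1\pi'_*\Z)$; you would need to supply this argument. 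You should also make explicit the use of Deuring's theorem that $\mathrm{End}(E)=\Z$ (valid because the $j$-invariant is transcendental over the prime field): it is what guarantees that the element of $\mathrm{Hom}(E,E')\otimes\Z_\ell$ corresponding to your Tate module isomorphism is of the form $a\psi$ for a single isogeny $\psi$ and a scalar $a\in\Z_\ell^\times$, so that reducing $a$ modulo $\ell^n$ actually produces isogenies $\psi_{\ell^n}$ agreeing with $\Phi$ on $E[\ell^n]$.
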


\begin{proof}
After multiplying by the $-1$ map  sending a divisor $D$ on $X$ to
$-D$, we can assume by Proposition \ref{fibre}, that $\Phi$ maps the fibre
of $\pi$ to that of $\pi'$. Choose a section $O$ (resp. $O'$) of $\pi$
(resp. $\pi'$). Write, 
\[ \Phi(O)=(P')-(O')+r(O')+V',\]
where $V'$ is a fibral divisor.  Since $1=OF=\Phi(O)F'$, we get
$r=1$. Thus the section $O$ gets mapped to a section of $\pi'$ modulo
the lattice spanned by  fibral divisors of  $X'$. Denote this section
by $O'$. 

Using $O$ and $O'$, define the  $G(\bar{K}_s/K)$-invariant `trivial lattices' 
$\overline{T(X)}=\varinjlim_{b\in \cB} T(X_b)$ contained   
 in $\overline{NS(X)}$ (and similarly for $\overline{T(X')}$. 
 From Proposition \ref{mwgroup}, we obtain an
 identification as $G(\bar{K}_s/K)$-modules, 
\begin{equation}\label{mwgggf}
E(K_s)=\overline{NS(X)}/\overline{T(X)}.
\end{equation}

For any natural number $n$, let $E[n]$ denote the group of 
$n$-torsion elements in 
$E(\bar{K}_s)$.
From the identification given by Equation \ref{mwgggf} and by
Proposition \ref{mwgroup},  there is a
$G(\bar{K}_s/K)$-equivariant identification, 
\[ E[n]=\{ D\in \overline{NS(X)}/\overline{T(X)} \mid nD\in
\overline{T(X)}\}
/\overline{T(X)}.\]
Fixing a rational prime $\ell$ coprime to the characteristic of $k$,
we get a compatible system of isomorphisms, $\Phi[\ell^n]:
E[\ell^n]\to E'[\ell^n]$. 
Taking the limit as $n\to \infty$  yields an isomorphism, 
\[ \Phi_{\ell^{\infty}} : T_{\ell}(E)\to T_{\ell}(E'),\]
of the Tate modules of the generic fibres of $X$ and $X'$.

Suppose $k$ is finitely generated over its prime field. By theorems of
Tate, Serre, Zarhin and Faltings (\cite{T, Se, Z, F, FW}), 
establishing the isogeny conjecture of Tate,  
 \[ {\rm Hom}(E, E')\otimes_{\Z} {\Z}_{\ell}\simeq {\rm Hom}_{G_K}(T_{\ell}(E),
T_{\ell}(E')),  \]
there exists a morphism $\psi: E$ to $E'$
defined over $K$, and a scalar $a\in\Z_l$ such that
$a\psi$ corresponds to 
$\Phi_{\ell^{\infty}}$. Here we are using a theorem of Deuring (\cite[Theorem 12]{Cl}) 
that  $\mbox{End}(E)=\Z$, since the 
$j$-invariant $j(E)$ of $E$ is not algebraic over the prime field contained in
$k$. 

The effect of $a\psi$ on $E[\ell^n]$ is given by that of an isogeny,
$\psi_{\ell^n}: \bar{a}_n\psi: E\to E'$, where $\bar{a}$ denotes a
lift to $\Z$ of  the image in $\Z/\ell^n\Z$ of $a\in \Z_{\ell}$. Since
the multiplication by $\ell$ maps from $E[\ell^{n+1}]\to E[\ell^n]$
are surjective and these groups are finite, the map $\psi_{\ell^n}$
coincides with $\Phi[\ell^n]$ on $E[\ell^n]$. 

In particular, for each $\ell$ coprime to the characteristic of $k$,
there is an isogeny  $\psi_{\ell} :E\to E'$, which coincides with the
action of $\Phi[\ell]$ on $E[\ell]$. Since $\Phi[\ell]$ is an
isomorphism, this proves the proposition when $k$ is
finitely generated over its prime field. 
   
Now suppose $k$ is an arbitrary field of characteristic zero. We
assume that $k$ is algebraically closed. The proposition follows now
from the geometric analogue of the Tate isogeny theorem given by
(\cite[Corollaire 4.4.13]{De}).  Choose an embedding of $k$ into the
complex numbers  $\C$. Let $S$ be a finite subset of $C(k)$ containing
the discriminant loci of $\pi$ and $\pi'$. The action of the absolute
Galois group $G(\bar{K}/K)$ acts via the algebraic fundamental group
$\pi_1(C- S, p)$, where $p$ is some chosen basepoint. By
(\cite[Theorem 4.6.10]{Sz}),  the fundamental groups are isomorphic
for base change of algebraically closed fields of characteristic
zero. Further if the elliptic curves $E$ and $E'$ are isogenous over
$\C(C)$, then they are isogenous over some finite extension of $k(C)$
contained inside $\C(C)$, hence defined over $k(C)$ since $k=\bar{k}$.
Hence it is enough to work over $k=\C$. 

The $j$-invariants of $E$ and $E'$ are non-constant elements of
$k(C)$. This property continues to hold for the base change of $E$ and
$E'$ to $\C$. The maps $\pi$  defines an abelian scheme $X-
\pi^{-1}(S) \to C- S$ (and similarly for $\pi'$). By (\cite[Corollaire
4.4.13]{De}), 
\[{\rm Hom}(X- \pi^{-1}(S),X'- \pi'^{-1}(S))\simeq {\rm
Hom}(R_1\pi_*\Z, R_1\pi'_*\Z),\] 
where the left hand side is as morphisms of abelian scheme over $C-S$,
and the right hand side is as
morphisms in the category of locally constant sheaves over $C-S$. By
the universal coefficient theorem for homology, since $H^0$ is
torsion-free, tensoring with $\Z_{\ell}$, we can identify
$R_1\pi_*\Z\otimes \Z_{\ell}\simeq T_{\ell}(E)$ (and similarly for
$E'$), as a module for the absolute Galois group $G_K$ of $K=\C(C)$.
Since tensoring with $\Z_{\ell}$ is fully faithful, we obtain a
$G_K$-equivariant isogeny $\psi: E \to E'$ for each rational
prime $\ell$, defined over $\C(C)$. Arguing as above, proves the proposition.
\end{proof}

\section{Non effective universal Torelli}\label{sec:utt}
In this section we prove Theorem \ref{utt}, 
but over $\bar{k}(C)$ when $k$ is an arbitrary field of characteristic
zero. 

 \begin{proposition}\label{isom}
With the hypothesis of Theorem \ref{utt}, the elliptic surfaces
$\cE$ and $\cE'$ are isomorphic over  $k(C)$
when $k$ is finitely generated over its prime field  and over
$\bar{k}(C)$ when $k$ is an arbitrary field of characteristic zero. 
\end{proposition}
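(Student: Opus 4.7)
The plan is to turn the isogenies from Proposition~\ref{isogeny} into an isomorphism of generic fibres using the non-CM structure of $E$, and then extend this to an isomorphism of the elliptic surfaces via uniqueness of relatively minimal regular models.

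First I would apply Proposition~\ref{isogeny} to get, for each prime $\ell \neq \operatorname{char}(k)$, an isogeny $\psi_\ell : E \to E'$ whose kernel meets $E[\ell]$ trivially, so $\gcd(\deg \psi_\ell, \ell) = 1$. Since $j(E)$ is transcendental over the prime subfield of $k$, Deuring's theorem gives $\operatorname{End}(E) = \Z$, hence $\operatorname{Hom}(E,E')$ is a torsion-free $\Z$-module of rank exactly one (rank $\geq 1$ by existence of some $\psi_\ell$, rank $\leq 1$ by non-CM). Let $\psi_0$ be a primitive generator. Writing $\psi_\ell = m_\ell \psi_0$, the degrees satisfy $\deg \psi_\ell = m_\ell^2 \deg \psi_0$, so $\deg \psi_0$ is coprime to every prime $\ell \neq \operatorname{char}(k)$. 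In characteristic zero this immediately forces $\deg \psi_0 = 1$.

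In positive characteristic $p$, where so far $\deg \psi_0$ is only known to be a power of $p$, the extra ingredient is that $\Phi$ descends, via the direct limit of Proposition~\ref{mwgroup} (after translating the zero section of $\pi'$ so that $\Phi$ carries $\overline{T(X)}$ onto $\overline{T(X')}$), to a $G(\bar{K}_s/K)$-equivariant bijection $\tilde\Phi : E(\bar{K}_s) \to E'(\bar{K}_s)$. The Faltings--Tate identification used in the proof of Proposition~\ref{isogeny} shows $\tilde\Phi|_{T_\ell(E)}$ and $\psi_0|_{T_\ell(E)}$ differ by a unit in $\Z_\ell$ for every $\ell \neq p$, so $\psi_0$ is bijective on the prime-to-$p$ torsion of $E(\bar{K}_s)$. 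A purely inseparable factor in $\psi_0$ would however force non-surjectivity on $\bar{K}_s$-points, because $\bar{K}_s$ is not perfect in characteristic $p$ and sufficiently general points of $E'(\bar{K}_s)$ would then have no preimage in $E(\bar{K}_s)$. This contradicts the bijectivity of $\tilde\Phi$, so $\deg \psi_0 = 1$ and $\psi_0 : E \to E'$ is an isomorphism of elliptic curves defined over $k(C)$ or $\bar{k}(C)$ according to the hypothesis.

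Finally, since $X$ and $X'$ are relatively minimal regular models of their generic fibres over the Dedekind base $C$, and such models are unique up to unique isomorphism \cite[Ch.~9]{Liu}, the isomorphism $\psi_0$ extends uniquely to an isomorphism $\theta : X \to X'$ of elliptic surfaces over $C$ defined over the same field. The hardest step will be the positive characteristic case: carefully comparing the map induced by $\Phi$ on $\bar{K}_s$-points of the Mordell--Weil groups with the map induced by $\psi_0$, and leveraging the non-perfectness of $\bar{K}_s$ to rule out purely inseparable factors in $\psi_0$.
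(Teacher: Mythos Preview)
Your characteristic zero argument is correct and arguably cleaner than the paper's: you use directly that $\operatorname{Hom}(E,E')$ is free of rank one over $\Z$ (by non-CM) to extract a primitive generator $\psi_0$ of degree coprime to every $\ell$, whereas the paper composes $\psi_\ell$ with the dual of $\psi_{\ell'}$ and uses $\operatorname{End}(E)=\Z$ to reach the same conclusion. Both routes end identically with the uniqueness of the relatively minimal regular model.

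In positive characteristic, however, your argument has a genuine gap. You only treat a \emph{purely inseparable} factor of $\psi_0$, but the kernel of a degree-$p^k$ isogeny can perfectly well be \'etale: $\psi_0$ could be a separable $p$-power isogeny killing a cyclic $p^k$-torsion subgroup. In that case $\psi_0$ \emph{is} surjective on $\bar K_s$-points, so your non-surjectivity argument says nothing. Even in the inseparable case the contradiction does not follow as stated: you have shown $\tilde\Phi$ is a bijection on $E(\bar K_s)$ and that $\psi_0$ agrees with $\tilde\Phi$ on prime-to-$p$ torsion up to a $\Z_\ell$-unit, but you never identify $\tilde\Phi$ with $\psi_0$ on all of $E(\bar K_s)$, so the failure of $\psi_0$ to be surjective does not contradict anything about $\tilde\Phi$.

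The paper closes this gap by a completely different, geometric argument that you are missing: after reducing to a cyclic $p$-power kernel (\'etale or connected, but not both, else it would contain $E[p]$), one uses a result of Dokchitser--Dokchitser on how Kodaira types change under isogeny---an $I_n$ fibre becomes $I_{n/p^k}$---together with Proposition~\ref{singlocus}, which says the universal isometry forces $X$ and $X'$ to have identical singular fibre types. This forces $k=0$. Your approach via $\bar K_s$-points does not access this fibre-type information and cannot be completed without some comparable geometric input.
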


\begin{proof}
Let $L=\bar{k}(C)$ when  $k$ is an arbitrary field of
characteristic zero, 
and equal to $k(C)$ when $k$ is finitely generated over its prime
field. Since the generic fibre uniquely determines the minimal regular
model, it is enough to show that the generic fibres $E$ and $E'$ are
isomorphic over $L$. 
By Proposition \ref{isogeny}, for any rational prime, there is an
isogeny $\psi_{\ell}: E\to E'$,  defined over $L$, 
such that the order of the kernel of $\psi_{\ell}$ is coprime to $\ell$. 

Suppose the kernel of $\psi_{\ell}$ contains a group scheme of the 
form $E[a]$ for some natural number $a$. Since multiplication by $a$
is an isomorphism of $E$ to itself, quotienting by groups of the form
$E[a]$, we can assume that kernel of $\psi_{\ell}$ is cyclic, in that
it does not contain
any subgroup scheme of the form $E[a]$. 

Choose some $\ell$ coprime to the characteristic $p$ of $k$.
Suppose for some $\ell'$ coprime to $p$, the $\ell'$-primary subgroup of
$\mbox{Ker}(\psi_{\ell})$ is non-trivial. 
Consider the isogeny, 
\[ A=\psi_{\ell'}^*\circ \psi_{\ell} : E\to E, \]
where $\psi_{\ell'}^*$ denotes the isogeny $E'\to E$ dual to
$\psi_{\ell'}$. 

Since $\psi_{\ell'}$ has no element of order $\ell'$ in its kernel, so
does $\psi_{\ell'}^*$. Since $\mbox{End}(E)=\Z$ by Deuring's theorem
(\cite[Theorem 12]{Cl}), 
 $A$ is  multiplication
by some integer $a\in \Z$. Thus $\mbox{Ker}(A)=E[a]$. 

On the other hand, the $\ell'$-primary part of  $\mbox{Ker}(A)$ is
isomorphic to  the $\ell'$-primary part of $\mbox{Ker}(\psi_{\ell})$,
and this is not of the  form $E[\ell'^k]$ for any $k$. This yields a
contradiction and implies
that the  $\ell'$-primary part of
$\mbox{Ker}(\psi_{\ell})$ is trivial for any
$\ell'$ coprime to the characteristic of $k$. This also proves the
proposition when characteristic of $k$ is zero. 

When the characteristic of $k$ is $p>0$, the foregoing argument implies
that there is an isogeny $\psi: E\to E'$, such that its kernel $G$
is a finite group scheme of order $p^k$,
not containing any subgroup scheme of the form $E[p]$.  The group scheme 
$E[p^k]$ is a semi-direct product
of the cyclic \'etale group scheme $\Z/p^k\Z$ by the
connected group scheme $E[p^k]^0$.
Suppose $G$ has both an \'etale and
connected component $G^0$. Both $G^0$ and $G/G^0$ will contain subgroup schemes
or order $p$. Then $G$  will contain $E[p]$,
contradicting our assumption on $G$. 
Hence we can assume that the kernel of both $\psi$
and the dual isogeny $\psi^*$ are either \'etale or a connected
group scheme. If $G$ is connected, then
the kernel of the dual isogeny  $\psi^*$ is \'etale as together they make up
$E[p^k]$. Hence we can
assume without loss of generality that $G$ is \'etale, and 
is generated by a section $(P)$, given by a torsion-element of
$P\in E(k(C))$ of order $p^k$. 

At a singular fibre, the group structure of the identity component is
$\G_m$. Hence the section $(P)$ cannot pass through the identity component 
of any singular fibre. Suppose that the singular fibre at $s$ of $\pi$
is of Kodaira type $I_n$ for some natural number $n$. 
By (\cite[Theorem A.1]{DoDo}), applied
inductively, the Kodaira type of the singular fibre of $\pi'$ is
$n/p^k$. But by Proposition \ref{singlocus}, both $X$ and $X'$ have
the same singular fibres. This implies that $k=0$ and the proposition
is proved.   
\end{proof}

\begin{remark} The isomorphism allows us to consider the composition
  of the universal isometry $\Phi$ with itself. This plays a crucial
  role in the proof of Theorem \ref{uett}. 
\end{remark}

\section{Effectivity}\label{sec:effectivity}
We now move towards the proof of Theorem \ref{uett}. In this section,
our aim is to prove the following proposition, giving the consequence
of the effectivity hypothesis that is required for the proof of
Theorem \ref{uett}:  
\begin{proposition}\label{effmaps}
Suppose $\Phi: UNS(\cE)\to UNS(\cE')$ is an effective universal 
isomorphism between universal Torelli 
 data of  two semistable elliptic surfaces as in the hypothesis of
Theorem \ref{uett}. Then for any $b\in \cB_C$, 
$\phi_{b}: ~NS(X_{b})\to
NS(X'_{b})$ sends the 
irreducible components of the singular fibres divisors of $\pi$ to 
the irreducible components of the singular fibres divisors of $\pi'$,
and  sections to sections. 
\end{proposition}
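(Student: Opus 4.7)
The plan is to establish the two claims in turn: first, that $\Phi$ sends irreducible fibral components to irreducible fibral components, and second, that it then sends sections to sections. I will use throughout that $\Phi(F)=+F'$, which follows from Proposition~\ref{fibre} combined with the effectivity of $\Phi$.

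For the first claim, I fix $t\in S$ with $n=n_t$. By Proposition~\ref{fibrepreserve} each class $\Phi(v_i^t)$ is fibral, as $\Phi$ is an isometry it has self-intersection $-2$, and effectivity makes it an effective class. A straightforward computation in the root lattice $\tilde A_{m-1}$ shows that an effective fibral $(-2)$-class at a fibre of Kodaira type $I_m$ must be a cyclically contiguous ``arc'' $v_a^{t'}+v_{a+1}^{t'}+\cdots+v_b^{t'}$. Applying this to all $v_i^t$ at once: since $v_i^t\cdot v_{i+1}^t=1$ (preserved by $\Phi$) and distinct fibres are orthogonal, the arcs $\Phi(v_0^t),\ldots,\Phi(v_{n-1}^t)$ must lie in a common singular fibre $t'$ of $\pi'$ and be cyclically end-to-end consecutive there. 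Their sum is $\Phi(F)=F'=\sum_j v_j^{t'}$, so the arcs tile the fibre; by Proposition~\ref{singlocus} we have $n_{t'}=n$, so there are $n$ arcs whose lengths sum to $n$, and each arc has length one. Hence $\Phi(v_i^t)=v_{f_t(i)}^{\tau(t)}$ for bijections $\tau\colon S\to S'$ and $f_t\colon\Z/n\Z\to\Z/n\Z$.

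For the second claim, let $(P)$ be a section of $\pi$. By Proposition~\ref{mwgroup}, together with the relation $\Phi((P))\cdot F'=1$ eliminating any $(O')$-coefficient, I can write
\[
\Phi((P))=(P')+V'+sF',
\]
where $(P')$ is the section representing the Mordell--Weil class of $\Phi((P))\bmod T(X')$, $V'$ is a fibral class normalised to have no $v_0^{t'}$-coefficient at any $t'\in S'$, and $s\in\Z$. Using $\Phi^{-1}$ on fibral components (known from the first claim) and the isometry property,
\[
\Phi((P))\cdot v_j^{t'}=(P)\cdot\Phi^{-1}(v_j^{t'})=\delta_{j,k_{t'}}
\]
for a unique $k_{t'}\in\Z/n_{t'}\Z$ at each $t'$. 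Writing $j^{\ast}_{t'}$ for the component of the fibre at $t'$ met by $(P')$ and comparing, I obtain
\[
V'\cdot v_j^{t'}=\delta_{j,k_{t'}}-\delta_{j,j^{\ast}_{t'}}\qquad(j\in\Z/n_{t'}\Z).
\]
The key step is to solve this second-difference system on the lattice $\tilde A_{n_{t'}-1}$ with a source at $k_{t'}$ and a sink at $j^{\ast}_{t'}$, subject to the no-$v_0^{t'}$ constraint: a would-be solution is piecewise linear in $j$ with slopes of the form $(k_{t'}-j^{\ast}_{t'})/n_{t'}$, and integrality forces $k_{t'}=j^{\ast}_{t'}$ at each fibre, hence $V'=0$. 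Then $\Phi((P))^2=-\chi$ applied to $(P')+sF'$ gives $s=0$, so $\Phi((P))=(P')$ is a section.

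The principal obstacle is this integrality argument at the end. The fact that the inverse of the Cartan matrix of $A_{n-1}$ has entries with denominator $n$ is precisely what rules out spreading a section class into a section plus a nonzero fibral correction satisfying the correct intersection data; without this, effectivity and the isometry condition alone do not force $V'=0$.
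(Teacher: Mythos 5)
Your approach is correct in outline but genuinely different from the paper's, and one step is overstated.  The paper proves this proposition by characterizing the relevant classes intrinsically (Lemma~\ref{charsectionsfibral}): a class is an irreducible fibral component iff it is effective, indecomposable, and has $D.F=0$; it is a section iff it is effective, indecomposable, and has $D.F=1$.  Since an effective universal isometry fixes $F$ (Proposition~\ref{fibre} plus effectivity) and preserves effectivity and sums, these characterizations are preserved and the proposition drops out immediately.  You instead compute directly in the root lattice.  For the first claim, your assertion that an effective fibral $(-2)$-class must be a cyclically contiguous arc is not true as stated: $v(a,b)^{t'}+rF'$ is effective, fibral, and of square $-2$ for any $r\ge 0$ (and one must also first argue, using evenness and negative-definiteness of the off-$v_0$ sublattice, that the class lives in a single fibre).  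The tiling argument $\sum_i\Phi(v_i^t)=\Phi(F)=F'$ is exactly what rescues this — it forces the nonnegative $F'$-coefficients to vanish and forces lengths to be $1$ — so the two steps cannot be cleanly separated the way you present them.  For the second claim, your second-difference/integrality computation is correct and is in fact the same calculation the paper uses later in Proposition~\ref{prop:nesec} for a different purpose; note that the final self-intersection step silently uses $\chi(X)=\chi(X')$, which follows from Proposition~\ref{singlocus}.  One genuine advantage of your route is worth recording: your argument for sections uses only that $\Phi$ (hence $\Phi^{-1}$) permutes irreducible fibral components, plus integrality in the $A_{m-1}$ lattice, whereas the paper's indecomposability argument tacitly needs $\Phi^{-1}$ to be effective as well (i.e.\ that $\phi_b$ maps the effective cone \emph{onto} the effective cone); your proof sidesteps that point entirely.
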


We start with the following lemma characterising fibral divisors: 
\begin{lemma}\label{efffibral}
Let $D$ be an effective divisor on ${X}$. Then $D.F\geq 0$. 
If morever $D.F=0$, then $D$ is a fibral
divisor. 
\end{lemma}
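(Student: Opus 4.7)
The plan is to decompose the effective divisor into prime components and analyze each according to whether $\pi$ restricts to a surjection on it or collapses it to a point.

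First I would write $D = \sum n_i D_i$ with $n_i \geq 0$ and the $D_i$ distinct prime divisors. Each prime divisor $D_i$ is an integral closed subscheme of codimension one in $X$, so its image $\pi(D_i)$ is an irreducible closed subset of $C$, hence either all of $C$ or a single closed point. I would call $D_i$ \emph{horizontal} in the first case and \emph{fibral} in the second. By bilinearity of the intersection product, $D.F = \sum n_i (D_i.F)$, so it suffices to show that $D_i.F \geq 0$ for every prime $D_i$, with strict inequality exactly when $D_i$ is horizontal.

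For a fibral prime $D_i$, choose $s \in C(k)$ distinct from $\pi(D_i)$ and replace $F$ by the linearly equivalent fibre $\pi^{-1}(s)$; since the two fibres are disjoint as subschemes of $X$, one obtains $D_i.F = 0$. For a horizontal prime $D_i$, the induced morphism $\pi|_{D_i}: D_i \to C$ is a surjective morphism from an integral projective curve to $C$, hence finite and flat of some positive degree $d_i$; moving $F$ again via $|F|$ to a fibre over a point in general position, $D_i.F$ equals $d_i$, which is strictly positive. (Equivalently, one can compute $D_i.F$ as the pullback $\pi^*[s] \cdot D_i = \deg(\pi|_{D_i})\cdot [\mathrm{pt}] > 0$.)

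Combining these, $D.F = \sum_{D_i \text{ horizontal}} n_i d_i \geq 0$, and equality holds if and only if every horizontal component appears with coefficient zero, i.e., only fibral components occur, so $D$ is a fibral divisor. There is no real obstacle here: the only subtlety is being careful that the intersection numbers of fibral primes with $F$ genuinely vanish, which is handled by moving $F$ in its linear equivalence class so the supports become disjoint.
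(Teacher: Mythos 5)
Your proof is correct and takes a somewhat different route from the paper's. The paper argues by contradiction for the inequality: working over $\bar{k}$, it reduces to $D$ irreducible, notes that $\pi|_D$ surjective and $D.F < 0$ would force $F_x \subset D$ for almost all points $x$ with irreducible fibre, making $D$ two-dimensional; and for the second assertion it observes that $D.F = 0$ forces some fibre to be disjoint from $D$, so $\pi|_D$ is not surjective. You instead decompose $D$ into primes, classify each as horizontal or fibral, and compute $D_i.F$ directly: zero for fibral primes by moving to a disjoint fibre, and $\deg(\pi|_{D_i}) > 0$ for horizontal primes via the projection formula. This buys you the explicit identity $D.F = \sum_{\text{horizontal}} n_i d_i$, from which both assertions drop out at once, whereas the paper's contradiction argument is a bit more terse but handles the two claims separately.

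One small imprecision worth fixing: two distinct fibres $\pi^{-1}(s)$ and $\pi^{-1}(s')$ need not be \emph{linearly} equivalent on $X$ (that would require $[s] \sim [s']$ on $C$, which fails for $g(C) > 0$); they are, however, \emph{algebraically} equivalent, and since the intersection pairing you are using is defined on $NS(X)$, which is divisors modulo algebraic equivalence, the substitution is still valid. Just replace ``linearly equivalent'' by ``algebraically equivalent'' (or ``numerically equivalent''), and likewise avoid invoking the complete linear system $|F|$ to move $F$, since $|F|$ may consist of the single divisor $F$.
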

\begin{proof} It is sufficient to prove this over $\bar{k}$, and we
assume now that $k=\bar{k}$.  We can assume that $D$ is an irreducible
closed subvariety of $X$ of codimension one. Suppose that $\pi\mid_D
:D\to C$ is dominant. Since $D$ is a closed subvariety and $\pi$ is
proper,  $\pi\mid_D$ is surjective.  Suppose for some $x\in C(k)$, the
fibre $F_x$ at $x$ is irreducible and $D.F_x=D.F<0$. Then the
fibre $F_x\subset D$. Since this happens for almost all $x\in C({k})$,  the
dimension of $D$ will be $2$, contradicting the fact that $D$ is
of dimension one. Hence $D.F\geq 0$. 

If $D.F=0$, then there exists a point $x\in C({k})$ such that
  $F_x$ and $D$ are distinct effective divisors which are disjoint.
Hence it follows that $\pi\mid_D$ cannot be surjective. This implies
that $D$ is a fibral divisor. 
\end{proof}

\begin{lemma} \label{irr.fibre1}
Suppose $D$ is an irreducible subvariety of ${X}$ and
  $D.F=1$. Then $D$ is a section, i.e.,   $\pi\mid_D :D\to
  C$ is an isomorphism. 
\end{lemma}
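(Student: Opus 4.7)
The plan is to show that $\pi\mid_D$ is a finite morphism of degree one from an irreducible curve to the smooth curve $C$, and then invoke normality of $C$ to promote this to an isomorphism.

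First, since $D\cdot F=1\neq 0$, Lemma \ref{efffibral} (or the same argument as in its proof) rules out the possibility that $D$ is contained in a fibre of $\pi$. Thus $\pi\mid_D:D\to C$ is a dominant morphism of irreducible curves; because $\pi$ is proper and $D$ is closed in $X$, the restriction $\pi\mid_D$ is proper, and being dominant with $1$-dimensional source and target, it has $0$-dimensional fibres. A proper quasi-finite morphism is finite, so $\pi\mid_D$ is finite.

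Next I would identify the intersection number $D\cdot F$ with the degree of $\pi\mid_D$. Writing $F=\pi^{*}(x)$ for a $k$-rational point $x\in C(k)$, one has $D\cdot F=\deg\bigl((\pi\mid_D)^{*}(x)\bigr)$, which equals the degree of the finite morphism $\pi\mid_D$. Hence $\deg(\pi\mid_D)=1$, i.e.\ $\pi\mid_D$ is birational.

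Finally, I would argue that a finite birational morphism onto a smooth (hence normal) curve is an isomorphism. Locally, if $U=\mathrm{Spec}(A)\subset C$ is an affine open, then $(\pi\mid_D)^{-1}(U)=\mathrm{Spec}(B)$ for some finite $A$-algebra $B$ with the same function field as $A$; since $A$ is a Dedekind domain, hence integrally closed in its fraction field, and $B$ is integral over $A$ inside that fraction field, one gets $B\subseteq A$, while the structure map gives $A\subseteq B$. Thus $A=B$ and $\pi\mid_D$ is an isomorphism, exhibiting $D$ as (the image of) a section. The main step, though purely formal, is the reduction of degree one to isomorphism; all other ingredients are immediate from the setup and from Lemma \ref{efffibral}.
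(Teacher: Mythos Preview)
Your argument is correct. The paper takes a slightly different but equally standard route: rather than showing $\pi\mid_D$ is a finite birational map onto a normal curve and invoking integrality, it observes that the degree-one map $\pi\mid_D$ gives an isomorphism on generic points, then uses properness of $\pi$ (valuative criterion) to extend the inverse rational map $C\dashrightarrow D\subset X$ to a morphism $s:C\to X$, and finally notes that the closed image $s(C)$ must coincide with $D$. Your approach packages the same content through the ``finite birational onto normal implies isomorphism'' lemma, which has the minor advantage of making explicit why $D$ itself (and not just its normalization) is isomorphic to $C$; the paper's approach has the minor advantage of directly producing the section map $s$ from $C$. Both are short and essentially interchangeable here.
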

\begin{proof}  
Since $D.F=1$, the map $\pi\mid_D :D\to
  C$ is surjective, and of degree $1$. Hence the generic points, say
  $D_0$ of $D$ and $C_0$ of 
  $C$ are isomorphic. Let $s_0:C_0\to D_0$ be an isomorphism. Since
  $\pi$ is proper, this extends to a map $S: C\to D$. The image $s(C)$
is a proper closed subvariety of $D$, and hence is equal to $D$. This
implies that $D$ is a section. 
\end{proof}

\subsection{Characterization of sections and irreducible 
fibre components}

\begin{definition} An effective  divisor $D\in NS({X})$ is said to be
 {\em indecomposable}, if it cannot be written as a sum of two effective
  divisors in $NS({X})$. 
\end{definition}
Note that if $X\to C$ is an elliptic surface with a singular fibre of
Kodaira type $I_n$ with $n\geq 2$, then the fibre is not
indecomposable in $NS(X)$. 

We now characterize sections and fibral divisors: 
\begin{lemma}\label{charsectionsfibral}
Let  $\cE: X\xrightarrow{\pi} C$ be a semistable elliptic surface over a
field $k$.   
Let $D\in NS({X})$, and  $F$ denotes the divisor in $NS({X})$
corresponding to a fibre of $\pi$. Then the following holds:

\begin{enumerate}
\item Suppose $n_t=1$ for all $t\in S$. Then the fibre is
  indecomposable. 

\item A divisor $D$ is an irreducible component of a singular fibre of
  Kodaira type $I_n, ~n\geq 2$ if and
  only if it is effective, indecomposable and  $D.F=0$.

\item A divisor $D$ on $X$ is a section, if and only if it is
effective,  indecomposable and $D.F=1$.  

\end{enumerate}
\end{lemma}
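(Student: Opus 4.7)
The plan rests on two elementary ingredients used throughout: (i) for any ample class $H$, $H\cdot D>0$ on any nonzero effective divisor $D$, so every nonzero effective divisor has nonzero class in $NS(X)$; (ii) the restriction of the intersection form to $\langle v_0^t,\dots,v_{n_t-1}^t\rangle$ is the Cartan form of $\tilde{A}_{n_t-1}$, negative semidefinite with one-dimensional radical $\langle F_t\rangle$ where $F_t=\sum_iv_i^t$. With this in hand, part (1) is short: if $F=E_1+E_2$ in $NS(X)$ with $E_i$ effective, then $E_i\cdot F\ge 0$ and the two terms sum to $F^2=0$, so $E_i\cdot F=0$; by Lemma~\ref{efffibral} each $E_i$ is fibral; and when every fibre is irreducible, every effective fibral divisor has class $aF$ with $a\ge 0$, so $a_1+a_2=1$ forces some $a_i=0$, hence $E_i=0$ in $NS(X)$ by (i).

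The heart of parts (2) and (3) is the claim that when $D\in NS(X)$ is the class of a component $v_i^t$ of an $I_n$-fibre with $n\ge 2$, or of a section $(P)$, the only effective divisor with class $D$ is $D$ itself. To see this I would take any effective representative $\tilde E$, compute $\tilde E\cdot F=D\cdot F\in\{0,1\}$, and use Lemma~\ref{irr.fibre1} when the value is $1$ to split off a section summand, so that in either case $\tilde E$ equals (possibly an already specified section plus) $\tilde V=\sum_{j,s}c_{j,s}v_j^s$ with $c_{j,s}\ge 0$. The intersection numbers $\tilde E\cdot v_k^{s'}=D\cdot v_k^{s'}$ for every $k,s'$ determine $\tilde V$ fibre by fibre via ingredient (ii): at each $s'\ne t$ the contribution $\sum_jc_{j,s'}v_j^{s'}$ lies in the radical $\langle F_{s'}\rangle$, hence equals $c_{s'}F_{s'}$; at $s'=t$ it is forced to be $v_i^t+c_tF_t$. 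Collecting, $\tilde E=D+(\sum_{s'}c_{s'})F$ in $NS(X)$, and the hypothesis $[\tilde E]=D$ forces $\sum c_{s'}=0$, whence all $c_{j,s}=0$ and $\tilde E=D$ as a divisor. For a section this is combined with Shioda--Tate (Proposition~\ref{mwgroup}): the horizontal part's class in $NS(X)/T(X)\simeq E(K)$ pins down $P$.

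Indecomposability of $v_i^t$ ($n_t\ge 2$) and of $(P)$ then follows formally: given $D=E_1+E_2$ in $NS(X)$ with $E_i$ effective and nonzero in $NS(X)$, effective representatives $\tilde E_i$ give an effective representative $\tilde E_1+\tilde E_2$ of $D$; uniqueness forces $\tilde E_1+\tilde E_2=D$ as a divisor, and $D$ being a prime divisor forces one $\tilde E_i=0$, whence $E_i=0$ by (i), a contradiction. For the converse directions of (2) and (3), start with $D$ effective indecomposable and take an effective representative $\tilde D=\sum_ka_kZ_k$ with $Z_k$ irreducible and $a_k\ge 1$. If more than one prime component appears or some $a_k\ge 2$, splitting off a single copy of some $Z_{k_0}$ realises $D=[Z_{k_0}]+[\tilde D-Z_{k_0}]$ as a sum of two nonzero effective classes (nonzero by (i)), contradicting indecomposability. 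Hence $\tilde D$ is an irreducible curve, which by Lemma~\ref{irr.fibre1} is a section when $D\cdot F=1$, and by Lemma~\ref{efffibral} is a fibral component when $D\cdot F=0$; in the latter case the remaining possibility $\tilde D=F$ (with $n_t=1$) is excluded whenever a singular fibre of type $I_m$ with $m\ge 2$ is present, because then $F=\sum_jv_j^{t_0}$ gives a decomposition of $F$ into $m\ge 2$ nonzero effective classes.

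The step requiring the most care is the uniqueness-of-effective-representative claim: it is the only place where the global arithmetic of the Mordell--Weil group (through Shioda--Tate) and the local Cartan-matrix geometry of each singular fibre have to be combined into a single intersection-theoretic computation. Once it is available, indecomposability of sections and of $(-2)$-components is a formal consequence, and the converse directions reduce to elementary case analysis with Lemmas~\ref{efffibral} and~\ref{irr.fibre1}.
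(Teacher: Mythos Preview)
Your proof is correct and follows essentially the same approach as the paper, relying on Lemmas~\ref{efffibral} and~\ref{irr.fibre1} together with the Shioda--Tate description of $NS(X)/T(X)$. Your organization is slightly different---you first prove the stronger statement that $v_i^t$ and $(P)$ are the \emph{unique} effective divisors in their $NS$-classes (via the Cartan-radical computation at each fibre) and then deduce indecomposability formally, whereas the paper argues indecomposability directly by writing out a putative decomposition, intersecting with $F$, $(O)$ and the fibre components, and invoking the linear independence of the non-identity components---but the underlying intersection-theoretic computation is the same.
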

 
\begin{proof} (1) Suppose the fibre is not indecomposable, and is
  written as a non-negative linear combination of effective
  divisors. By Lemma 1, only fibral divisors occur non-trivially in
  such a sum. But since $n_t=1$ for all $t\in S$, the only fibral
  divisors contributing to  $NS(X)$ are multiples of $F$.

(2)  If $D.F=~0$, then $D$ is a fibral divisor. Since any fibral
effective divisor can be written as an integral linear combination of
the irreducible components of singular fibres, if $D$ is
indecomposable then it has to be an irreducible component of a
singular fibre of $X\to C$. 

Suppose $v_0$ is an irreducible component of the singular fibre of
$\pi$ at $t_0$ of Kodaira type $I_{n_{t_0}}, ~n_{t_0}\geq 2$,  
and is not indecomposable. 
Then there is an expression of the
form, 
\[ v_0= \sum_{i\in I}n_i(P_i)+\sum_{t\in S}l_{t,j}v_{t,j}, \quad n_i, ~l_{t,j}\geq 0 \quad
\forall i\in I, ~t\in S, ~0\leq j\leq n_t-1.  \]
wher $S$ is the ramification locus of $\pi$ and for $t\in S$, the
fibre is of Kodaira type $I_{n_t}$. Here $I$ is a finite set and
$(P_i)$ is a section. Intersecting with the fibre implies that $n_i=0$
for all $i\in I$. 

Since the self-intersection of $v_0$ is $-2$, this implies
that $l_{t_0,j_0}\geq 1$, where $v_0=v_{t_0,j_0}$. 
 This implies that there is a sum of the form 
$\sum_{t\in S}l_{t,j}v_{t,j}, \quad l_{t,j}\geq 0$ which is equivalent
to $0$ in $NS(\bar{X})$. Intersecting with the zero section $O$
implies that $l_{t,0}=0$ for all $t\in S$, where $v_{t,0}$ denotes the
irreducible component in the fibre at $t$ meeting the zero section. 
By the theorem of Tate-Shioda (\cite[Corollary 6.13]{SS}),  the rest of the
components are linearly independent and hence $l_{t,j}=0$ for all
$t$ and $j$.  This
implies that the irreducible fibral divisor $v_0$ is
indecomposable.

(3)   If  $D$ is an effective, indecomposable divisor on ${X}$, then
$D$ is irreducible. By Lemma \ref{irr.fibre1}, if $D.F=1$, then $D$ is
a section.

Suppose  the section $(P)$ where $P\in E(K)$, 
 can be written as $(P)=\sum_{i\in I}n_iD_i$, 
where $D_i$ are irreducible subvarieties of $\bar{X}$ and $n_i\geq 0$
for $i\in I$.   By
Lemma \ref{efffibral},
the interesection of any effective divisor with a fibre is
non-negative. Upto reindexing, it can be assumed that there exists an
index denoted $1\in I, ~ D_1.F=1$ and 
$ D_i.F~=~0$ for $i\in I, i\neq 1$. By Lemma \ref{irr.fibre1}, $D_1$
is a section, say $(Q)$, for some $Q\in E(K)$. The function fields of
the generic fibre $E$ and $X$ are isomorphic. Thus at the generic
fibre $P$ and $Q$ are linearly equivalent. Being effective, this
implies that $P=Q$. By Lemma \ref{efffibral}, 
$\sum_{i\in I, i\neq 1}n_iD_i$ is
a non-negative sum of irreducible components of fibres and is linearly
equivalent to zero. By the theorem of Tate-Shioda, $n_i=0$ for all $i\neq 1$, and this proves that $(P)$ is indecomposable. 

\end{proof}

{\em Proof of Proposition \ref{effmaps}.}  By Proposition
\ref{fibre}, an universal effective isometry preserves fibres.  
Hence  Proposition \ref{effmaps}
follows from the above lemmas.

\subsection{Translations}
Given a section corresponding to a rational element $P\in E(K)$, the
translation map, 
$\tau_P : NS(X)\to NS(X)$, given by translating by the section
$(P)$ is an isometry. Further it is effective. 

Suppose $b: B\to C$ is in $\cB_C$. The rational element $P$ can be
considered as an element in $E(L)$, where $L$ is the function field of
$B$ and thus defines a translation isometry from $NS(X_b)$ to
itself. It can be seen that  $p_b^*\circ\tau_P=\tau_{P}\circ p_b^*$. Thus
the collection of translations $\tau_P$ for $b\in \cB_C$ defines an
effective isomorphism of the universal N\'eron-Severi group $UNS(X)$
of $X\to C$. 

Proposition \ref{isom} gives an isomorphism of the elliptic surfaces
$X$ and $X'$. Suppose that under this isomorphism the zero section
$O'$ of $X'\to C$ maps to the section $O''$ of $X\to C$. The 
map $\Phi\circ \tau_{-O''}: UNS(X)\to UNS(X)$ gives an
effective isomorphism of universal Torelli data from the elliptic surface
$\cE$ to itself, preserving  the zero section. 

From now on, we will assume that the universal isometry $\Phi$ as
maps from $UNS(X)$ to itself preserving the zero section.

\section{Revisiting action on fibral divisors}\label{sec:fibral2}
 We state a special, refined version of the universal Torelli
theorem, to take care of both Theorems \ref{uett} and \ref{guti}. 
\begin{theorem}\label{torelli}
Let  $\cE: X\xrightarrow{\pi} C$ be a semistable elliptic surface over
$k$. Let $\Phi: UNS(X)\to UNS(X)$ be an
automorphism of the universal N\'eron-Severi group of $X$ satisfying
the following: 
\begin{itemize}
\item $\Phi$ preserves the fibre: $\Phi(F)=F$.
\item $\Phi$ preserves the zero section: $\Phi((O))=(O)$.
\item $\Phi$ maps the irreducible components of
singular fibres to irreducible components of singular fibres.
\item $\Phi$ sends sections to sections. 
\end{itemize}
Then $\Phi$ arises from either the identity or the
inverse map $\iota: P\mapsto -P$ of the generic fibre $E$ over
$k(C)$. 
\end{theorem}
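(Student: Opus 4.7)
The hypotheses ensure $\Phi$ preserves the trivial sublattice: on every $NS(X_b)$ with $b\in\cB_C$, the divisors $(O)$, $F$, and the irreducible components of singular fibres are permuted among themselves, so $\Phi(T(X_b))=T(X_b)$. The plan is first to use Proposition~\ref{mwgroup} to descend each $\phi_b$ to a group homomorphism $\bar\Phi_b$ on the Mordell-Weil group of the generic fibre over $l(B)$, and then to assemble these via universality and the identification \eqref{mwgggf} into a $G(\bar{K}_s/K)$-equivariant group endomorphism
\[ \bar\Phi : E(\bar{K}_s)\longrightarrow E(\bar{K}_s).\]

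Next I would invoke the Tate-module argument used in Proposition~\ref{isogeny}: restricting $\bar\Phi$ to $\ell^n$-torsion for each rational prime $\ell$ coprime to $\mathrm{char}(k)$ yields a continuous $\Z_\ell[G_K]$-linear endomorphism of $T_\ell(E)$, which by Faltings' theorem combined with Deuring's result that $\mathrm{End}_K(E)=\Z$ must be multiplication by some $a_\ell\in\Z_\ell$. The induced action of $\bar\Phi$ on each finitely generated $E(l(B))$ is given by an integer matrix; using the Kummer embedding $E(l(B))\otimes\Z_\ell\hookrightarrow H^1(G_{l(B)},T_\ell(E))$ to compare the two descriptions forces this matrix to be scalar and the $a_\ell$ to be the image of a single integer $a\in\Z$, so that $\bar\Phi$ is multiplication by $a$ on all of $E(\bar{K}_s)$. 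Using the Shioda-Tate identification of $T(X_b)^\perp\otimes\Q\subset NS(X_b)\otimes\Q$ (up to sign) with the N\'eron-Tate height lattice of $E(l(B))/\mathrm{tors}$, and observing that multiplication by $a$ scales the height pairing by $a^2$, choosing $b$ with $E(l(B))$ of positive rank (possible since $E$ is non-isotrivial) forces $a^2=1$.

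It remains to deduce the dichotomy. When $a=+1$, $\Phi$ fixes every section of every $X_b$; by choosing base changes that create torsion sections meeting prescribed non-identity components of a given singular fibre, the relation $\Phi((P))\cdot\Phi(v_j)=(P)\cdot v_j$ forces $\Phi$ to fix every irreducible component, and since sections together with fibral divisors generate $NS(X_b)$ we conclude $\Phi$ is induced by $\mathrm{id}_X$. When $a=-1$, the same strategy shows $\Phi$ sends each section $(P)$ to $(-P)$ and each component $v_i$ of an $I_n$ fibre to $v_{-i}$, which is precisely the action of the involution $\iota:P\mapsto-P$ extended to the minimal regular model. The main obstacle is the rigidity passage in the second paragraph: concluding that an abstract $G_K$-equivariant group endomorphism of the Mordell-Weil groups is a single integer scalar, where the Tate isogeny conjecture and Deuring's computation of $\mathrm{End}_K(E)$ do the essential work.
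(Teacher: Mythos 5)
Your proposal takes a genuinely different route from the paper, and while several pieces are sound, there is a gap at the step where you conclude $a^2=1$.

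The paper's proof of Theorem~\ref{torelli} never invokes the Tate isogeny conjecture. Instead, the key structural step is Proposition~\ref{square}: using Theorem~\ref{thm:keyglobal} (a torsion section is determined by its intersections with the components of the singular fibres) together with the fact that any isometry of an $\tilde{A}_{n-1}$ fibre fixing $v_0$ is either the identity or the flip $v_i\mapsto v_{-i}$, one gets $\Phi^2=\mathrm{id}$ on $\mathrm{Tor}(X_b)$ for every $b$. Combined with the local argument at a node via Tate's $p$-adic uniformization (which makes $\Phi$ upper-triangular on $V_\ell(E)$ with respect to the monodromy filtration), this yields $\Phi=\pm\,\mathrm{id}$ on $E[\ell^\infty]$ by elementary means. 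Your route re-imports Faltings (resp.\ Deligne) together with Deuring to get $\bar\Phi|_{T_\ell(E)}=a_\ell\in\Z_\ell$; this is correct as far as it goes, but it is heavier machinery than needed, and it quietly imposes the restriction that $k$ be finitely generated over its prime field or of characteristic zero, a restriction not present in the statement of Theorem~\ref{torelli}.

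The genuine gap is in the passage $a\in\Z \Rightarrow a^2=1$. You deduce this from the isometry of the N\'eron-Tate height lattice, which requires some $b$ with $\mathrm{rank}\,E(l(B))>0$. You assert this is ``possible since $E$ is non-isotrivial,'' but give no argument, and this is not an obvious fact: extremal semistable elliptic surfaces have Mordell--Weil rank zero, and the Shioda--Tate bound $\mathrm{rank}\,MW=\rho-2-\sum_t(n_t-1)$, combined with $\sum_t n_t=12\chi(X_b)$ and $\rho\le h^{1,1}(X_b)$, shows the trivial-lattice contribution grows linearly with $\deg(b)$, so it is unclear that base change ever produces positive rank. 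Without a reference or proof of the positive-rank claim, the height argument does not close. The paper sidesteps this entirely: Proposition~\ref{square} gives $\Phi^2=\mathrm{id}$ on torsion, hence $a_\ell^2=1$, with no rank hypothesis. You should replace the height step with this torsion-rigidity argument. Your final dichotomy paragraph is in the right spirit but is also more schematic than the paper's Proposition~\ref{geomonfibres} and the narrow Mordell--Weil/Galois-invariance argument of Theorem~\ref{idonfibres}, which is what actually forces $\Phi$ to be geometric on all of $NS(X_b)$, not just modulo $T(X_b)$.
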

From what has been done so far, under the hypothesis of Theorem \ref{uett}, the hypothesis of Theorem \ref{torelli} hold true. With a bit of descent, Theorem \ref{uett} will follow from Theorem \ref{torelli}. 

The proof of Theorem will be given in Section \ref{sec:uett}.
In this section, our aim is to show that $\Phi$ is partially geometric,
in that it arises from an
isomorphism of elliptic surfaces restricted to torsion and the fibral
divisors.

\subsection{N\'eron models, torsion elements and the 
narrow Mordell-Weil group} 
We recall some crucial facts that follow from the properties of N\'eron models. Given a point $t\in
S$, let ${\mathcal O}_{t}$ be the local ring of the curve 
$C$ at $t$, and $K_t$ be its quotient field. 
By localization, the elliptic surface
defines an elliptic curve $E_t$ defined over $K_t$.
Let ${\cE}_t$ denote the N\'eron model of $E_t$. This is a group
scheme defined  over  ${\mathcal O}_{t}$, with the property that
$E_t(K_t)={\cE}_t({\mathcal O}_{t})$. The special fibre of the N\'eron
model $\cE_t$ can be identified with the complement of the
singular locus in the fibre $X_t$. In particular, the
collection of connnected components $G_t$ of a singular fibre $X_t$
acquires a group structure,  with the component intersecting the zero
section  as the
identity element of the group law. When the fibre is of Kodaira-N\'eron type
$I_n$, the group of connected components $G_t\simeq \Z/n\Z$.  
The specialization map yields a
homomorphism $\psi: E_t(K_t)\to G_t. $

The main global ingredient in the proof of Theorem \ref{torelli} is the
following theorem (\cite[Corollary
7.5]{SS}), stating that a torsion section is determined by its
intersections with the components of the singular fibres:
\begin{theorem}\label{thm:keyglobal}
The global specialization map yields an 
injective homomorphism,
\[ \psi: E(K)_{tors}\to \prod_{t\in S}G_t, \]
where $E(K)$ is the torsion subgroup of $E(K)$. 
\end{theorem}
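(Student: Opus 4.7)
The plan is to take a torsion section $P \in E(K)_{\rm tors}$ with $\psi(P) = 0$---i.e., such that the section $(P)$ meets the identity component $v_0^t$ at every $t \in S$---and force $P = O$ via an intersection-theoretic computation. Let $n$ be the order of $P$. By Proposition \ref{mwgroup}, the class of $D := n(P) - n(O)$ in $NS(\bar X)/T(\bar X)$ is zero, so $D$ lies in $T(\bar X)$. Using the relations $F = \sum_{j=0}^{n_t-1} v_j^t$ at each $t$ to eliminate the components $v_0^t$, I would use the integral direct sum decomposition
\[ T(\bar X) \;=\; \Z\cdot (O) \;\oplus\; \Z\cdot F \;\oplus\; \bigoplus_{t\in S}\bigoplus_{1\leq j\leq n_t-1}\Z\cdot v_j^t,\]
and write $D = a(O) + bF + V$ with $V$ a $\Z$-linear combination of non-identity components. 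Pairing with $F$ gives $a = D\cdot F = n - n = 0$.

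The key step is to show $V = 0$. Since $(P)$ and $(O)$ each meet every singular fibre only at the identity component, $(P)\cdot v_j^t = (O)\cdot v_j^t = \delta_{j,0}$ for every $t$ and $j$. Therefore $D\cdot v_j^t = 0$, and since $F\cdot v_j^t = 0$ as well, I obtain $V\cdot v_j^t = 0$ for every fibral component at every singular place. At a fibre of type $I_{n_t}$, the intersection form on $\{v_0^t,\ldots,v_{n_t-1}^t\}$ is the negative of the affine Cartan matrix of type $\tilde A_{n_t-1}$, whose one-dimensional radical is spanned by the class of the fibre $F_t = \sum_j v_j^t$. Since by construction $V$ has zero coefficient on $v_0^t$ but any nonzero radical vector supported at $t$ has nonzero $v_0^t$-coefficient, the $t$-part of $V$ must vanish. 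Hence $V = 0$ and $D = bF$.

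To finish, I would compute $D^2$ in two ways. First, $F^2 = 0$ gives $D^2 = b^2 F^2 = 0$. Second, using $(P)^2 = (O)^2 = -\chi(X)$,
\[ D^2 \;=\; n^2\bigl((P)^2 + (O)^2 - 2(P)\cdot (O)\bigr) \;=\; -2n^2\bigl(\chi(X) + (P)\cdot (O)\bigr),\]
which forces $(P)\cdot (O) = -\chi(X)$. However, if $P\neq O$ then $(P)$ and $(O)$ are distinct irreducible curves on $\bar X$, so $(P)\cdot (O)\geq 0$; combined with $\chi(X) > 0$, this is a contradiction. Hence $P = O$, proving the injectivity of $\psi$.

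The one step requiring genuine care is the identification of $V$ as a radical vector at each singular fibre, which relies on the structure of the $\tilde A_{n-1}$ root system; all else follows mechanically from Proposition \ref{mwgroup}, the formula $(O)^2 = -\chi(X)$, and positivity of intersections between distinct irreducible curves.
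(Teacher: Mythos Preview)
The paper does not prove this theorem; it quotes it from \cite[Corollary~7.5]{SS}. Your argument is correct and is, in fact, the standard proof, phrased directly in terms of the N\'eron--Severi lattice rather than the height pairing. In the language of \cite{SS}, if $P$ is torsion then its canonical height $\hat h(P)$ vanishes; the Shioda height formula
\[
\hat h(P)\;=\;2\chi(X)+2\,(P)\cdot(O)-\sum_{t\in S}\mathrm{contr}_t(P)
\]
with all local contributions zero (since $(P)$ hits the identity component at every $t$) forces $(P)\cdot(O)=-\chi(X)<0$, contradicting $(P)\cdot(O)\geq 0$ for distinct sections. Your computation of $D^2$ for $D=n(P)-n(O)$ is exactly this identity unpacked: writing $D=a(O)+bF+V$, showing $a=0$ and $V=0$ via the $\tilde A_{n_t-1}$ radical, and then reading off $(P)\cdot(O)=-\chi(X)$ from $D^2=0$ is precisely how one derives the height formula in the first place. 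So your route is not different from the cited one, just more self-contained.
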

 Define the narrow Mordell-Weil group $E_O(K)$ to be the subspace of
$E(K)$ consisting of the elements
$P\in E(K)$ such that the section $(P)$ of $\pi:X\to C$ corresponding
to $P$ intersects each singular fibre  at the identity
component.  Equivalently, $E_O(K)=\mbox{Ker}(\psi)$. A conseqeunce of
Theorem \ref{thm:keyglobal} is that $E_O(K)$ is torsion-free.

\subsection{$\Phi^2$ is partially geometric}
For any $b\in \cB$, let
$\mbox{Tor}({X}_b)$ denote the group of `torsion sections' 
of ${\pi}_b: {X}_b\to {B}$,
corresponding to the torsion elements in the generic fibre
$E({k}(X_b))$ of ${\pi}_b$ of order coprime to the characteristic of
$k$.

The fact that  $\Phi$ can be considered as 
a self-map from  $UNS(X)$ to itself, allows one to compose $\Phi$ with
itself. We have, 
\begin{proposition}\label{square}
 For any $b\in \cB$, the restriction of $\phi_b^2$  to $\mbox{Tor}({X}_b)$ is the identity map. 
\end{proposition}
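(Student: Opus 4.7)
The plan is to combine the injectivity of the global specialization map of Theorem \ref{thm:keyglobal} with the fact that $\Phi$ acts on the group of components at each singular fibre as an involution, so that $\Phi^2$ acts trivially on the product $\prod_{t \in S_b} G_t$.

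I would first observe that $\Phi$ preserves the trivial sublattice $T(X_b) \subset NS(X_b)$, since $T(X_b)$ is generated by the zero section $(O)$ together with the irreducible components of the singular fibres, all of which are preserved as sets under the hypotheses of Theorem \ref{torelli}. Via the Shioda-Tate identification of Proposition \ref{mwgroup}, $\Phi$ thus descends to a group automorphism $\Phi^\sharp$ of $E(k(B)) = NS(X_b)/T(X_b)$, which in particular preserves the torsion subgroup. Since the section map $P \mapsto (P)$ is injective, it is enough to prove $(\Phi^\sharp)^2(P) = P$ for every torsion $P \in E(k(B))_{tors}$.

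For each $t \in S_b$ with fibre of Kodaira type $I_{n_t}$, Proposition \ref{singlocus} ensures that $\Phi$ restricts to an isometric automorphism of $NS(X_{b,t})$, thereby inducing a permutation $\sigma_t$ of the components $v_0^t, \ldots, v_{n_t-1}^t$. The component $v_0^t$ is characterised as the unique one satisfying $(O) \cdot v_0^t = 1$, so combined with $\Phi((O)) = (O)$ this forces $\sigma_t$ to fix $v_0^t$. The only incidence-preserving permutations of the cycle $\tilde{A}_{n_t-1}$ fixing $v_0^t$ are the identity and the reflection $v_i^t \mapsto v_{-i}^t$; under the identification $G_t \simeq \Z/n_t\Z$ these are respectively the identity and negation, both involutions. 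Moreover, the isometry relation $\Phi((P)) \cdot \Phi(v_j^t) = (P) \cdot v_j^t$, combined with $(P) \cdot v_j^t = \delta_{j, \psi_t(P)}$, shows that $\Phi((P))$ meets the component $v_{\sigma_t(\psi_t(P))}^t$; equivalently, $\psi_t \circ \Phi^\sharp = \sigma_t \circ \psi_t$.

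Iterating, $\psi_t((\Phi^\sharp)^2(P)) = \sigma_t^2(\psi_t(P)) = \psi_t(P)$ for every $t \in S_b$. Since $(\Phi^\sharp)^2(P)$ is again torsion, the injectivity of the global specialization map from Theorem \ref{thm:keyglobal} forces $(\Phi^\sharp)^2(P) = P$, as required. The main obstacle is the third step, identifying $\sigma_t$ as an involution; this rests on the rigidity of the cyclic affine Dynkin diagram $\tilde{A}_{n_t-1}$ together with the preservation of the zero section, and once secured the rest is a formal chase through the Shioda-Tate identification and the specialization map.
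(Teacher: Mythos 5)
Your proposal is correct and follows essentially the same route as the paper's proof: fix the zero component via $\Phi((O))=(O)$, observe that the induced permutation of the components of each $I_{n_t}$-fibre is then either the identity or the flip $v^t_i\mapsto v^t_{-i}$ (hence an involution), and conclude by the injectivity of the global specialization map on torsion from Theorem \ref{thm:keyglobal}; your explicit verification of the compatibility $\psi_t\circ\Phi^{\sharp}=\sigma_t\circ\psi_t$ is just a more careful spelling-out of what the paper leaves implicit. The only cosmetic point is that for $n_t\leq 2$ the fibre is not literally a cycle, but there the permutation fixing $v^t_0$ is forced to be the identity, exactly as the paper notes, so nothing is lost.
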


\begin{proof}
The zero section $O$ of $\pi$ pulls back to the zero section $O_b$ of
$\pi_b$.  The zero section $O_b$ is fixed by $\phi_b$. 
Let $S_b$ be the ramification locus of $\pi_b$.  Suppose the  Kodaira type of the singular fibre at $t\in S_b$  is of type  $I_{m_t}$ for
some $m_t\in \N$. If $m_t\geq 3$, the singular fibre is a chain of
rational curves each with self-intersection $-2$ and intersecting its
neighbours with multiplicity one. Since the component intersecting the
zero section is fixed by $\phi_b$, and $\phi_b$ is an isometry, it
will either act as identity or act as an involution sending the
divisor $v_j$ to $v_{-j}$, where we are using the notation as in 
section 2.9. If $m_t\leq 2$, then there at most two components. 
Thus $\phi_b^2$  acts as identity on each
$G_t$ for each $t\in S_b$. 
The proposition now follows from Theorem \ref{thm:keyglobal}.  
\end{proof}

\subsection{An application of Tate uniformization} We now apply
Tate's uniformization of semistable elliptic curves to gain further control on $\Phi$. Fix a  singular point $s\in S$ where the fibre is of type $I_n$
for some $n\in \N$. Corresponding to $s$, there is a non-trivial
discrete valuation $\nu_s$ of the function field $K$ of $C$, and we 
let $\widehat{K}$
be the completion of the $K$ with respect to the valuation
$\nu_s$. Denote by $L$ the algebraic closure of $\widehat{K}$. 

Since the elliptic surface has semistable reduction at $s$, 
the $p$-adic uniformization theorem of Tate asserts the existence of
$q\in \widehat{K}^*$, such that there is a 
$G(L/\widehat{K})$-equivariant isomorphism, 
\begin{equation}\label{tateunif}
 L^*/q^{\Z}\to E(L).
\end{equation}
Fix a rational prime $\ell$ coprime to the characteristic
of $k$. The $p$-adic uniformization theorem implies that  the Tate
module $T_{\ell}(E/\widehat{K})$ of the elliptic curve $E$ considered over
$\widehat{K}$ sits in the following exact sequence of
$G(L/\widehat{K})$-modules,  
\begin{equation}\label{unifexseq}
 1\to T_{\ell}(G_m)\to T_{\ell}(E/\widehat{K}) \to \Z_{\ell}\to 0. 
\end{equation}
where $T_{\ell}(G_m)=\varprojlim_{n\to \infty} \mu_{\ell^n}$, and
$\mu_{\ell^n}$ is the group of $\ell^n$-th roots of unity in
$L$. 

Let $D_s\subset G(\bar{K}/K)$ 
denote the decomposition group at $s$, defined as the image
under the restriction map to $\bar{K}_s$ of
$G(L/\widehat{K})$. Since $\Phi$ is
$G(\bar{K}/K)$-equivariant, it is also $D_s$-equivariant. 
The Tate module
$T_{\ell}(E)$ is isomorphic as abelian groups to
$T_{\ell}(E/\widehat{K})$. The decomposition group 
$D_s$ preserves the filtration given by Equation
(\ref{unifexseq}). 

With respect to this filtration, it is known that
the Zariski closure of the image of $D_s$ inside $Aut(V_{\ell}(E))$
contains the group of matrices which act as identity on the associated
graded decomposition of the space
$V_{\ell}(E)=T_{\ell}(E)\otimes_{\Z_{\ell}}\Q_{\ell}$ (\cite{Se}). Choosing an
appropriate basis, the Zariski closure of the 
image thus contains the subgroup $U$ of 
upper triangular unipotent matrices. Since $\Phi$ is equivariant with respect to the action of $D_s$, it is equivariant with respect to $U$. Hence we have, 
\begin{lemma}
  The map $\Phi$ acting on $V_{\ell}(E)$ is upper triangular with respect to the
  filtration given by Equation \ref{unifexseq}.
\end{lemma}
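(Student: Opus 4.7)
The plan is to upgrade the $D_s$-equivariance of $\Phi$ to a commutation with the unipotent subgroup $U$ appearing in the Zariski closure of the image of $D_s$, and then observe that any nontrivial element of $U$ has fixed subspace exactly $V_\ell(G_m)$, so that the centralizer of $U$ in $\mathrm{End}(V_\ell(E))$ automatically preserves this first step of the filtration.

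More precisely, consider $\Phi$ as an element of $\mathrm{GL}(V_\ell(E))$, and let $Z(\Phi) \subset \mathrm{GL}(V_\ell(E))$ denote its centralizer. The centralizer is defined by the equation $g\Phi = \Phi g$, which is a system of polynomial equations in the matrix entries of $g$, so $Z(\Phi)$ is a Zariski closed subgroup of $\mathrm{GL}(V_\ell(E))$. The hypothesis that $\Phi$ is $G(\bar{K}/K)$-equivariant gives that the image of $D_s$ is contained in $Z(\Phi)$; by Zariski closedness, the full Zariski closure of the image of $D_s$ lies in $Z(\Phi)$. By the result of Serre quoted just above the lemma, this Zariski closure contains the unipotent subgroup $U$ preserving the filtration \eqref{unifexseq}; hence $\Phi$ commutes with every element of $U$.

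Now choose any nontrivial $u \in U$. With respect to a basis adapted to the exact sequence \eqref{unifexseq}, $u$ has the form $\begin{pmatrix} 1 & a \\ 0 & 1 \end{pmatrix}$ with $a \neq 0$, so the kernel of $u - \mathrm{id}$ is exactly the line $V_\ell(G_m) \otimes \Q_\ell$. Since $\Phi$ commutes with $u$, it preserves $\ker(u - \mathrm{id})$, which is precisely $V_\ell(G_m) \otimes \Q_\ell$. Thus $\Phi$ stabilizes the first step of the filtration, which is the definition of being upper triangular with respect to \eqref{unifexseq}.

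The only delicate point in this plan is verifying that the ``equivariance with respect to $U$'' claimed in the paragraph preceding the lemma is justified, i.e.\ that one really is allowed to pass from the $D_s$-equivariance to equivariance under the Zariski closure of its image; but this is a standard consequence of the centralizer being Zariski closed, so there is no substantial obstacle. Everything else is linear algebra once the Tate uniformization has been invoked to put $U$ in upper-triangular unipotent form.
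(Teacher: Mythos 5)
Your argument is correct and is essentially the same as the paper's, which compresses the same reasoning into the paragraph preceding the lemma: pass from $D_s$-equivariance to commutation with the Zariski closure (containing $U$) via Zariski-closedness of the centralizer, then conclude preservation of the filtration. You have merely made explicit the linear-algebra step that commuting with a nontrivial unipotent $u$ forces preservation of $\ker(u-\mathrm{id})=V_\ell(\mathbf{G}_m)$, which the paper leaves implicit.
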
 

Since the only upper triangular matrices of order $2$ are diagonal
matrices with entries $\{\pm 1\}$ along the diagonal, 
combining this with Proposition \ref{square}, we get

\begin{corollary}\label{pm}
Suppose $n\geq 1$. Let $P$ be a generator for the group
$\mu_{\ell^n}(L)\subset E[\ell^n]$, and  $Q$ be a
generator for  the
quotient group $E[\ell^n]/\mu_{\ell^n}(L^*)$. Then 
\[ \Phi((P))= \pm (P) \quad \mbox{and }\quad  \Phi((Q))= \pm (Q).\]
\end{corollary}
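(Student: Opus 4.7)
The plan is to combine the two structural constraints on $\Phi$ that have been established immediately before the corollary: (i) by the preceding lemma, the action of $\Phi$ on $V_\ell(E)$ is upper triangular with respect to the filtration in Equation (\ref{unifexseq}), so it restricts to an endomorphism of $T_\ell(G_m)$ and descends to an endomorphism of the quotient $\Z_\ell$; and (ii) by Proposition \ref{square}, $\Phi^2$ is the identity on all torsion sections, hence on $E[\ell^n]$ for every $n$, and so by passage to the inverse limit $\Phi^2 = \mathrm{Id}$ on $T_\ell(E)$. Both the sub $T_\ell(G_m)$ and the quotient $\Z_\ell$ are free rank one $\Z_\ell$-modules, so the induced maps are involutive endomorphisms of $\Z_\ell$; the only such endomorphisms are multiplication by $\pm 1$.

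With that in hand, I would read off the two assertions directly. The element $P$ generates the $\ell^n$-torsion $\mu_{\ell^n}(L)$ of the subgroup $T_\ell(G_m)$ in the filtration. Since $\Phi$ preserves this subgroup and acts there as $\pm 1$, one obtains $\Phi((P)) = \pm (P)$ as sections. The element $Q$ generates the quotient $E[\ell^n]/\mu_{\ell^n}(L)$, on which $\Phi$ (being upper triangular) descends to multiplication by $\pm 1$, yielding $\Phi((Q)) = \pm (Q)$ in the quotient. That is the statement of the corollary.

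There is no real obstacle here; the work has been done in the lemma and in Proposition \ref{square}. The only subtlety worth flagging is that, as an endomorphism of $T_\ell(E)$, $\Phi$ need not itself be diagonal in the filtration: an involutive upper triangular $2\times 2$ matrix $\bigl(\begin{smallmatrix} a & c \\ 0 & b\end{smallmatrix}\bigr)$ with $a,b \in \{\pm 1\}$ forces $c(a+b)=0$, so $c=0$ when $a=b$ but $c$ is unconstrained when $a=-b$. Consequently, the identity $\Phi((Q)) = \pm(Q)$ in the second assertion must be interpreted modulo $\mu_{\ell^n}(L)$, which is precisely the setting in which $Q$ was defined as a generator.
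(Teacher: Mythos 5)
Your proof takes the same route as the paper's one-line argument, and you have correctly spotted the hole in that argument: an upper triangular involution $\bigl(\begin{smallmatrix} a & c \\ 0 & b \end{smallmatrix}\bigr)$ need not be diagonal, since $c$ is unconstrained when $a=-b$. The paper's assertion that ``the only upper triangular matrices of order $2$ are diagonal matrices with entries $\{\pm 1\}$'' is, as you note, simply false in the off-diagonal case.

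Where your proposal goes astray is in the repair. Weakening the conclusion to ``$\Phi((Q)) = \pm(Q)$ modulo $\mu_{\ell^n}(L)$'' is not what the corollary is used for: in the proof of Proposition \ref{prop:geomtors}, one takes $Q$ and $Q+P$ (both lifting generators of the quotient), applies the corollary to each as a genuine identity in $E[\ell^n]$, and derives $2Q=0$ or $2P=0$ by comparing. If the corollary only held modulo $\mu_{\ell^n}$, the comparison $\Phi(Q+P) = -Q+P$ versus $\pm(Q+P)$ would be vacuous modulo $\mu_{\ell^n}$, and the proposition would collapse. So your reinterpretation salvages a statement, but not the one the paper needs.

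The actual fix comes from re-reading the proof of the lemma preceding the corollary, which establishes more than the lemma states. The lemma is proved by observing that $\Phi$ is equivariant under the unipotent group $U$ generated by $\bigl(\begin{smallmatrix} 1 & 1 \\ 0 & 1 \end{smallmatrix}\bigr)$ in the filtration-adapted basis. But the centralizer of $U$ inside $\mathrm{GL}_2$ consists precisely of matrices $\bigl(\begin{smallmatrix} a & c \\ 0 & a \end{smallmatrix}\bigr)$ with \emph{equal} diagonal entries. So $\Phi$ is not merely upper triangular; its diagonal entries agree. Combined with $\Phi^2 = \mathrm{Id}$ from Proposition \ref{square}, this gives $a^2=1$ and $2ac=0$; since $a$ is a unit and $\Z_\ell$ is torsion-free, $c=0$. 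Hence $\Phi = \pm\,\mathrm{Id}$ on $V_\ell(E)$ --- in fact with the same sign for $P$ and for $Q$, which is stronger than the corollary as stated and renders the case analysis in Proposition \ref{prop:geomtors} unnecessary. Your reading of the lemma's \emph{statement} is correct, but the gap is in the paper's bookkeeping (the lemma understates what its own proof yields), not in the mathematics, and the ``modulo $\mu$'' workaround is not the right resolution.
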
 

\subsection{$\Phi$ is geometric on torsion}
Corollary \ref{pm} allows us to conclude that $\Phi$ is geometric restricted to
$\ell^{\infty}$-torsion:
\begin{proposition}\label{prop:geomtors}
With hypothesis as in Theorem \ref{torelli}, 
$\Phi$ restricted to  $E[\ell^{\infty}]$ is either identity or the
inverse map $P\mapsto -P$, where $\ell$ is a rational prime coprime to
the characteristic of $k$. 
\end{proposition}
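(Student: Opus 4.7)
The plan is to work on the rational Tate module $V_{\ell}(E) = T_{\ell}(E)\otimes_{\Z_{\ell}}\Q_{\ell}$ and to show that $\Phi$ acts there as $\pm\mathrm{Id}$; the statement for $E[\ell^{\infty}]$ then follows by passing to $V_{\ell}(E)/T_{\ell}(E)$. First I would apply Proposition \ref{square} uniformly across all base changes $b \in \cB_C$ needed to realise individual $\ell^n$-torsion points as rational sections, obtaining $\Phi^{2} = \mathrm{Id}$ on $E[\ell^{\infty}](\bar{K}_s)$. Consequently $\Phi$ acts on $V_{\ell}(E)$ as an involution and is diagonalisable over $\Q_{\ell}$ with eigenvalues in $\{\pm 1\}$.

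Next, fix a singular point $s$ of $\pi$ and invoke Corollary \ref{pm}: it provides, at each level $n$, a basis of $E[\ell^{n}]$ compatible with the Tate filtration on which $\Phi$ acts diagonally with eigenvalues $\epsilon_{1},\epsilon_{2}\in\{\pm 1\}$; compatibility of these bases under multiplication by $\ell$ shows that the signs are independent of $n$ and describe the action of $\Phi$ on $V_{\ell}(E)$ in the decomposition coming from Tate uniformisation at $s$. In the case $\epsilon_{1}=\epsilon_{2}$, $\Phi$ is a scalar on $V_{\ell}(E)$, matching the action of the identity map or of the inversion $\iota\colon P\mapsto -P$ on the generic fibre, and the proposition follows at once.

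The main obstacle, and the only step requiring input beyond a single singular fibre, is to exclude the mixed case $\epsilon_{1}=-\epsilon_{2}$, in which $\Phi$ would have two distinct one-dimensional eigenspaces $L_{\pm}\subset V_{\ell}(E)$. Here I would appeal to the $G(\bar{K}_{s}/K)$-equivariance of $\Phi$ granted by the reformulation in Theorem \ref{uett-galois}: each $L_{\pm}$ is then stable under the whole absolute Galois group, and in particular under the inertia at any singular fibre $s' \in S$. By Tate uniformisation at $s'$ (cf.\ \eqref{unifexseq}), the inertia at $s'$ contains a non-trivial unipotent element whose unique invariant line in $V_{\ell}(E)$ is the Tate subspace $T_{\ell}(G_{m})_{s'}\otimes\Q_{\ell}$; hence both $L_{+}$ and $L_{-}$ must equal this line, contradicting $L_{+}\ne L_{-}$. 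Since non-isotriviality of $\cE$ guarantees the existence of at least one such singular fibre, the mixed case is impossible, and the proposition is proved.
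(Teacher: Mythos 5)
Your proof is correct, but the crux of the argument --- excluding the case where $\Phi$ has eigenvalues $1$ and $-1$ on $V_{\ell}(E)$ --- is handled by a genuinely different device than the one the paper uses. The paper's proof stays entirely local at the chosen singular point $s$: it observes that if $\Phi(P)=P$ and $\Phi(Q)=-Q$, then $P$ and $Q+P$ \emph{also} satisfy the hypotheses of Corollary~\ref{pm} (since $Q+P$ is another lift of a generator of the quotient), so $\Phi(Q+P)=P-Q$ must equal $\pm(Q+P)$, forcing $2P=0$ or $2Q=0$ and hence a contradiction whenever $\ell^n>2$. This is a short pigeonhole argument needing nothing beyond Corollary~\ref{pm} itself. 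You instead pass to the eigenspaces $L_{\pm}\subset V_{\ell}(E)$, use the $G(\bar{K}_s/K)$-equivariance of $\Phi$ to make them Galois-stable, and then invoke the nontriviality of the unipotent inertia action at a singular fibre (which, since $\ell$ is coprime to the residue characteristic, acts unipotently via the Tate parameter) to pin down a unique inertia-invariant line, contradicting $L_+\neq L_-$. That is a valid route, and it recycles the same Tate-uniformization/decomposition-group machinery that the paper has already set up just before the statement; indeed one can push your observation further --- equivariance under a Zariski-dense set of unipotents forces $\Phi$ to be of the form $\left(\begin{smallmatrix} a & b \\ 0 & a\end{smallmatrix}\right)$, so the mixed case cannot even arise --- but the paper deliberately stops at upper-triangularity in the lemma and closes the gap with the elementary $P,\,Q+P$ trick instead. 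The two buy different things: the paper's argument is self-contained and purely combinatorial on a single fibre, whereas yours makes the representation-theoretic content (why semistable monodromy forbids two stable lines) explicit, at the price of quoting the inertia structure again.
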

\begin{proof}
Suppose for $\ell^n$ and $P, Q\in E[\ell^n]$ as in Corollary \ref{pm}, 
\[ \Phi(P)= P \quad \mbox{and }\quad  \Phi(Q)= -Q.\]
Now $P$ and $Q+P$ also satisfy the hypothesis of Corollary
\ref{pm}. The foregoing equation yields, $\Phi(Q+P)=-Q+P $. By
Corollary \ref{pm}, $\Phi(Q+P)=-Q+P $ is equal to either $Q+P$ or
$-(Q+P)$. This implies respectively, $2Q=0$ or $2P=0$. This yields a
contradiction if $\ell^n>2$. A similar argument works when
$\Phi(P)=-P$. 
\end{proof}

Hence restricted to $ E[\ell^n]$ and for any $n$ uniformly, 
we have that $\Phi$ is either identity or the
additive inverse map. After multiplying the base change Torelli
isomorphism $\Phi$ by the morphism induced by the $-Id$ isomorphism of
the elliptic surface, we can assume that $\Phi$ induces the identity
map on  $ E[\ell^n]$ and for any $n$.

\subsection{$\Phi$ is geometric on fibres}

\begin{proposition} \label{geomonfibres}
 With hypothesis as in Theorem \ref{torelli},
 upto multiplication by an element of
  $Aut(E/k(C))\simeq \Z/2\Z$, $\phi_b$ acts as identity on the trivial lattice
$T(X_b)$ for any $b\in \cB$. 
 \end{proposition}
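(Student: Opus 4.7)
The plan is to combine the constraints on $\phi_b$ coming from the preservation hypotheses of Theorem \ref{torelli} with the identity action on torsion established in Proposition \ref{prop:geomtors}, via a base change and Tate uniformization argument.

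First, I would show that $\phi_b$ fixes $O_b$, fixes $F_b$, and preserves each individual singular fibre, permuting its components. Universality applied to $p_b^*((O)) = (O_b)$ and $p_b^*(F) = (\deg b) F_b$, together with the torsion-freeness of $NS(X_b)$, forces $\phi_b((O_b)) = (O_b)$ and $\phi_b(F_b) = F_b$. Since $\phi_b$ sends irreducible components of singular fibres to irreducible components (Theorem \ref{torelli}), and components of distinct singular fibres are disjoint while the intersection graph of a single $I_n$-fibre is a cycle, $\phi_b$ must preserve each singular fibre individually. The zero component $v_0^{t'}$ is fixed (as the unique component meeting $O_b$), and preservation of the cyclic intersection pattern restricts $\phi_b$ on the components at $t' \in S_b$ to either the identity or the involution $\sigma_{t'}\colon v_j^{t'} \mapsto v_{-j}^{t'}$. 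The involution is trivial unless $n_{t'} \geq 3$, so I assume $n_{t'} \geq 3$.

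Next, I would rule out $\sigma_{t'}$ by producing, on a further base change, a torsion section intersecting the fibre asymmetrically. Fix an odd prime $\ell$ coprime to $\operatorname{char}(k)$, and choose $a\colon B'' \to B$ in $\cB$ totally ramified of some degree $e$ at $t'$ with $\ell \mid en_{t'}$, writing $t''$ for the preimage. By Theorem \ref{ssbc}(ii) the fibre of $\pi_{b''}$ at $t''$ is of Kodaira type $I_{en_{t'}}$. Tate's $\ell$-adic uniformization (Equation \ref{unifexseq}) gives a surjection $E(\bar{K}_s)[\ell^{\infty}] \twoheadrightarrow G_{t''}[\ell^{\infty}]$, so one can choose $P \in E(\bar{K}_s)[\ell^{\infty}]$ specializing at $t''$ to a component $v_k^{t''}$ with $2k \not\equiv 0 \pmod{en_{t'}}$; after absorbing a still further base change into $b''$, the point $P$ is rational over the base field of $B''$.

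By the standing normalization of Proposition \ref{prop:geomtors}, $\phi_{b''}((P)) = (P)$. If $\phi_{b''}$ acted as $\sigma_{t''}$ on the components at $t''$, the isometry property would give
\[ 1 = (P)\cdot v_k^{t''} = \phi_{b''}((P))\cdot \phi_{b''}(v_k^{t''}) = (P)\cdot v_{-k}^{t''} = 0, \]
because the section $(P)$ meets the fibre at the unique component $v_k^{t''}$ and $k \not\equiv -k \pmod{en_{t'}}$. This contradiction forces $\phi_{b''}$ to be the identity on the components at $t''$, and consequently on the pullback $p_a^*(v_j^{t'})$, which is a positive linear combination of those components by Proposition \ref{ssfibrepullback}. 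From the universality relation $\phi_{b''}\circ p_a^* = p_a^* \circ \phi_b$ and the injectivity of $p_a^*$ (the intersection form on $NS(X_b)\otimes \Q$ is non-degenerate and $p_a^*$ scales it by $e$), one concludes $\phi_b(v_j^{t'}) = v_j^{t'}$. Varying $t'$ over $S_b$ and combining with the fixing of $O_b$ and $F_b$, $\phi_b$ is the identity on $T(X_b)$. The main obstacle is the torsion construction: producing, after base change, a rational $\ell^{\infty}$-torsion section whose specialization lies in an asymmetric component of the singular fibre, which hinges on Tate's uniformization surjecting onto the $\ell$-primary part of the component group together with the freedom to enlarge the base while preserving the needed $\ell$-divisibility of the fibre order.
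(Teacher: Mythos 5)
Your proof is correct and follows essentially the same route as the paper: normalize using Proposition \ref{prop:geomtors} so that $\Phi$ fixes $\ell^{\infty}$-torsion sections, use Tate uniformization to produce (after base change) a torsion section meeting a singular fibre at an asymmetric component, derive a contradiction with the cyclic involution via the isometry property, and descend to $\phi_b$ by injectivity of $p_a^*$. Your version is more explicit about the choice of ramification degree and the resulting contradiction $1=0$, but the underlying argument is the one the paper gives (which refers back to the reasoning in Proposition \ref{square} for the dichotomy between identity and the flip $v_j \mapsto v_{-j}$).
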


\begin{proof}
 Let $\ell$ be a rational prime coprime to
the characteristic of $k$. By Proposition \ref{prop:geomtors}, we can assume that upto
multipliying  by an element of
  $Aut(E/k(C))$, $\Phi$ acts trivially on $E[\ell^{\infty}]$. We need
  to conclude that $\Phi$ acts trivially on the fibral divisors.  

For this, it is enough to prove it for some base change $b$, since
$p_b^*$ is injective. The injectivity of $p_b^*$ follows from the fact that the intersection pairing on the N\'{e}ron-Severi group of an elliptic surface is non-degenerate taken in conjunction with Equation \ref{pullbackns}.
Consider the base change over which the elements
of $E[\ell]\subset E(l(B))$. It follows from the exact sequence
(\ref{unifexseq}), that
for any singular fibre of $\pi_b$, there will be a $\ell$-torsion
section, not of order $2$ in the group $E[\ell]/\mu_{ell}$, where
$\mu_{\ell}$ is sitting in $E[\ell]$ by Tate uniformization. Since
$\Phi$ acts trivially on $E[\ell]$, and $\Phi$ respects the intersection
product, it follows as in the proof of Proposition  \ref{square} that
 $\Phi$ acts as identity on the irreducible components of any singular
 fibre. 
\end{proof}

\section{Proof of Theorem \ref{uett}}\label{sec:uett}
In this section, we give a proof of Theorem \ref{torelli} (and  
Theorem \ref{uett}). By Proposition
\ref{isom}, the elliptic surfaces $X\to C$ and $X'\to C$ become
isomorphic (over a possibly quadratic extension of $k(C)$ contained
inside $\bar{k}(C)$ in case the characteristic of $k$ is
zero). Assume now that the elliptic surfaces are isomorphic. By
Corollary \ref{effmaps},  the map $\Phi$ preserves sections and the 
irreducible components of the singular fibres divisors of $\pi$. 
Translating by a section, we can assume that $\Phi$ preserves the zero
section of $\pi$. Finally by Proposition \ref{geomonfibres}, 
upto multiplication by an element of
  $Aut(E/k(C))$, we can assume that $\Phi$ acts as identity on the
  trivial lattice. 
 Thus the proof of Theorem \ref{torelli} follows from
the proof of the following theorem: 

\begin{theorem} \label{idonfibres} With hypothesis as in Theorem
  \ref{torelli}, assume further that $\phi_b$ acts as identity on the
  trivial lattice $T(X_b)$ for any $b: B\to C$. Then $\Phi$ is the
  identity map.  
\end{theorem}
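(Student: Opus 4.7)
The plan is to use the hypotheses to extract a Galois-equivariant endomorphism $\sigma$ of the Mordell-Weil group of the generic fibre, show that it differs from the identity only by a section meeting every singular fibre at the identity component, and then force the difference to vanish by combining Kummer theory with the torsion-freeness of the narrow Mordell-Weil group given by Theorem \ref{thm:keyglobal}.

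First, I would use that $\phi_b$ fixes the zero section, sends sections to sections, and acts as the identity on the trivial lattice $T(X_b)$, to conclude that $\phi_b$ descends through the section identification of Proposition \ref{mwgroup} to a group homomorphism $\sigma_b: E(K_b) \to E(K_b)$, characterized by $\phi_b((P)) = (\sigma_b(P))$. The compatibility $p_a^* \circ \phi_b = \phi_{b\circ a} \circ p_a^*$ assembles the $\sigma_b$ into a single Galois-equivariant endomorphism $\sigma$ of $E(\bar{K})$. Since $\phi_b$ fixes every irreducible component $v$ of a singular fibre, the isometry property gives $(\sigma_b(P))\cdot v = \phi_b((P))\cdot \phi_b(v) = (P)\cdot v$, so $\sigma_b(P)$ meets each singular fibre at the same component as $P$. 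Under the specialization map $\psi$ of Theorem \ref{thm:keyglobal} this says $\psi(\sigma_b(P) - P) = 0$, and therefore $\sigma_b(P) - P$ lies in the narrow Mordell-Weil group $E_O(K_b) = \ker \psi$.

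Next I would apply Proposition \ref{prop:geomtors}: after the normalization fixed immediately before Theorem \ref{idonfibres}, $\sigma$ acts as the identity on $E[\ell^\infty]$ for a prime $\ell$ coprime to $\text{char}(k)$. The naturality of the Kummer boundary map $\delta_n: E(K_b)/\ell^n E(K_b) \hookrightarrow H^1(G_{K_b}, E[\ell^n])$ under the Galois-equivariant homomorphism $\sigma$, combined with the triviality of its induced action on $E[\ell^n]$, yields $\delta_n(\sigma_b(P) - P) = 0$; by injectivity, $\sigma_b(P) - P \in \ell^n E(K_b)$ for every $n$. The Lang--N\'eron theorem, applicable because the elliptic surface is not potentially isotrivial, shows $E(K_b)$ is finitely generated, whence $\bigcap_n \ell^n E(K_b)$ is exactly the prime-to-$\ell$ torsion subgroup. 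Combining with the previous paragraph, $\sigma_b(P) - P$ is a torsion element of $E_O(K_b)$; but $E_O(K_b)$ is torsion-free by Theorem \ref{thm:keyglobal}, so $\sigma_b(P) = P$. Hence $\phi_b$ fixes every section, and together with the hypothesis on $T(X_b)$ this forces $\phi_b = \text{id}$ on $NS(X_b)$ for every $b$.

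The decisive ingredient is the interplay between the Kummer-theoretic constraint (which pins down $\sigma_b(P) - P$ to the prime-to-$\ell$ torsion of $E(K_b)$) and the torsion-freeness of $E_O(K_b)$ (which kills exactly those torsion elements that remain admissible). The main obstacle I anticipate is verifying the Lang--N\'eron hypothesis uniformly across all $b \in \cB_C$; in the characteristic-zero case with arbitrary $k$ this requires first base-changing to $\bar{k}(C)$ as in Proposition \ref{isogeny}, after which the argument is insensitive to the ground field.
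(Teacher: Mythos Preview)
Your proof is correct and arrives at the same endpoint as the paper, but the mechanism for the key divisibility step is genuinely different. The paper sets $u_b(P)=\sigma_b(P)-P$, shows (as you do) that $u_b(P)\in E_O(K_b)$, and then proves a separate Galois-descent lemma: if $u\in E_O(K)$, $u'\in E_O(L)$, and $nu'=u$ with $n$ coprime to $p$, then already $u'\in E_O(K)$. Feeding in $u'=U(Q)$ for an $n$-th root $Q$ of $P$ at a higher level $b$, and using that $E_O(K)$ is free of finite rank, the paper concludes that a nonzero $u(P)$ would be infinitely divisible inside $E_O(K)$, a contradiction. Your route replaces this hand-built descent by the Kummer boundary: Galois-equivariance of $\sigma$ together with $\sigma=\mathrm{id}$ on $E[\ell^n]$ forces $\delta_n(\sigma_b(P))=\delta_n(P)$, so $\sigma_b(P)-P\in\ell^nE(K_b)$ for every $n$ directly in $E(K_b)$ itself, with no passage to a larger $b$. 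Both arguments rest on the same two pillars (torsion-freeness of $E_O$ and Lang--N\'eron), but yours is more conceptual and bypasses the auxiliary proposition, while the paper's is more elementary in that Galois cohomology is never named.

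Two small remarks. First, you do not need to invoke Proposition~\ref{prop:geomtors} at all: your own first paragraph already yields $\sigma=\mathrm{id}$ on all torsion, since for torsion $P$ the difference $\sigma_b(P)-P$ is torsion and lies in the torsion-free group $E_O(K_b)$. Second, your worry about Lang--N\'eron over arbitrary $k$ in characteristic zero is unnecessary: the theorem applies to $E(K_b)$ over the constant field $l$ of $K_b$ directly, since $K_b/l$ is a finitely generated regular extension and the $K_b/l$-trace of $E$ vanishes by the non-isotriviality hypothesis; no preliminary base change to $\bar{k}$ is required.
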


\begin{proof} 
For $(l, B, b)\in \cB_C$, let $K_b$ denote the function field $l(B)$. 
Since $\Phi$ fixes the trivial lattice, by passing to
  the quotient $NS(X_b)/T(X_b)$, it yields a homorphism, say $\phi_b^0: E(K)
  \to E(K)$ of the 
  Mordell-Weil group $E(K_b)$ of the generic fibre to itself. 
For $P\in E(K_b)$, let $u_b(P)=\phi^0_b(P)-P$. The map $u_b: E(K_b)\to E(K_b)$ is
a homomorphism,  $ u_b(P+Q)=u_b(P)+u_b(Q)$. The universal property of
$\Phi$ implies that the maps $u_b$ patch to give a map $
U: E(\bar{K})\to E(\bar{K})$.

\begin{lemma} For any $P\in E(K_b)$, $u_b(P)$ lies in the narrow
  Mordell-Weil group $E_O(K_b)$. 
\end{lemma}
\begin{proof}
 The  group of components
of the special  fibre ${\mathcal E_{b,t}}$ of the N\'eron model of the
base changed elliptic curve $E_b$ at a  point $t$ of ramification of
$\pi$ is
indexed by the irreducible components of $X_t$. Since by assumption,
$\Phi$ acts as identity on the set of irreducible components of the
singular fibre, the sections $\Phi((P))$ and $(P)$ pass through the same
irreducible component of the singular fibre. By the group law on
${\mathcal E}_{b,t}$, it follows that the element $u_b(P)$ passes
through the identity component of $X_t$, and this proves the lemma.
\end{proof}

Next,  we observe the following Galois invariance property: 

\begin{proposition} Let  $L=k(B)$ be a finite Galois extension of
  $K=k(C)$ for some $b\in {\mathcal B}_C$. 
 Suppose $u\in E_O(K), ~u'\in E_O(L)$ and $nu'=u$
  for some $n$ coprime to $p$. 
 Then $u'\in E_O(K)$, i.e., $u'$ is
  $G(L/K)$-invariant. 
\end{proposition}
\begin{proof}
The section $(O)$ is defined over $K$. Thus the identity components of
the singular fibres of $X_b$ are invariant by
$\rm{Gal}(L/K)$.  Now for any $s$ in the ramification locus of
$\pi_b$, 
\[ \sigma(u').v_s^0= \sigma(u').\sigma(v_s^0)=u'.v_s^0=1.\]
Hence we obtain that $\sigma(u')\in E_O(L)$. Since
$n\sigma(u')=\sigma(nu')=\sigma(u)=u$, we have $\sigma(u')=u'+t$,
for some $n$-torsion element $t\in E(L)$. Since $u'\in E_O(L)$, the
element $t\in E_O(L)$. Since $E_O(L)$ is torsion-free, this implies
$t=O$ and proves the proposition. 
\end{proof}

\begin{corollary} Given any $0\neq u\in E_0(K)$, there exists some $n$
  sufficiently large such that any $u'\in E(\bar{K})$ with $nu'=u$,
  then $u'$ cannot lie in $E_0(\bar{K})$.
\end{corollary}
If $u'\in E_0(\bar{K})$, by the proposition, we have that $u'\in E(K)$, and
hence in $E_O(K)$. 
By the theorems of Mordell-Weil (\cite{Si1}), and Lang-N\'{e}ron (\cite{LN}),
$E_O(K)$ is a finitely generated abelian group,  and free by 
Theorem \ref{thm:keyglobal}. Hence the corollary follows.  

We can now finish the proof of Theorem \ref{idonfibres}:   
Given $P\in E(K)$ and $u(P)\in E_O(K)$, choose $n$ as
in the above corollary, and $Q\in E(\bar{K})$ with $nQ=P$.  
Then $nU(Q)=U(P)$. Since $U(Q)$
belongs to $E_0(\bar{K})$, the corollary implies that $U(Q)=0$, i.e.,
$\phi^0(Q)=Q$. Since $nQ=P$, and sections are mapped to sections, 
 this implies $\Phi((P))=(P)$. 
\end{proof}

\subsection{Descent and proof of Theorems \ref{uett} and \ref{utt}}
The preceding arguments establish Theorem
\ref{uett} and \ref{utt}, except in the case when the characteristic
of $k$ is zero and the Kodaira types of the singular fibres are of the
form $I_n$ with $n\geq 3$. Here we have a priori that $\Phi$ arises
from an isomorphism $\theta: \cE \to \cE'$ which is defined over a
quadratic extension $l$  of $k$.  Let $\sigma$ be the non-trivial
element of ${\rm Gal}(l/k)$. Denote by $\Phi^{\sigma}=\sigma\circ
\Phi\circ \sigma$. These maps are equal on the N\'eron-Severi groups
 $N(X_t)$ of the singular fibres $X_t, ~t\in S$ of $X\to C$. 
It follows from Proposition
\ref{psitoPsi}, that the maps $\Phi$ and $\Phi^{\sigma}$ are equal. By
Theorem \ref{idonfibres}, it follows that the maps $\theta$ and
$\theta^{\sigma}$ are equal on $\ell^{\infty}$-torsion, and hence they
are equal. This establishes the required descent property for the proofs  of Theorems \ref{uett} and \ref{utt}. 

\section{Base change and fibral divisors}\label{sec:basechange}
Let $\cE: X\xrightarrow{p} C$ be a semistable elliptic surface over
$k$. Fix a point $x_0$ of in the ramification locus of $\pi$ 
such that the fibre of $\pi$ over 
$x_0$ is of type $I_n$ for some $n\geq 2$.  
Let $v_i$   be the
irreducible components of the singular fibre of $p$ at $x_0$
where the indexing is the group $\Z/n\Z$, 
and the intersection multiplicities are as follows for  $i, j \in \Z/n\Z$: 
\[ v_i^2=-2, \quad v_iv_{i+1}=1\quad\mbox{and} \quad v_iv_j=0 \quad
\mbox{for}~ j\neq i, i-1, i+1. \]
We assume that the zero section passes through  $v_0$. 

 Let  $b\in {\mathcal B}_C$ be  a separable, finite morphism 
of degree $d$.  Fix a point 
$y_0\in B(k)$ mapping to $x_0$. Assume that $y_0$ is 
totally ramified over $x_0$ of degree $d$. It is
known that the fibre of $X_b\to B$ over the point $y_0$ is of type
$I_{nd}$.  Let $w_i, ~i \in \Z/nd\Z$ be the
irreducible components of the singular fibre over $y_0$, with indexing
similar to the one given above. 
We would like to describe the inverse image divisors $p_b^*(v_i)$. 
\begin{proposition}\label{ssfibrepullback}
With notation as above, 
\[ p_b^*(v_i)=dw_{di}+\sum_{0<j<d}(d-j)(w_{di-j}+w_{di+j}).\]
\end{proposition}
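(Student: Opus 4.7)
The plan is to reduce the statement to a local calculation around each node of the $I_n$ fibre at $x_0$. Near the node where $v_i$ meets $v_{i+1}$, pick local coordinates $(x,y)$ on $X$ with $v_i=\{x=0\}$ and $v_{i+1}=\{y=0\}$, so that the local equation of $\pi$ at $x_0$ takes the form $xy=t$ for a uniformiser $t$ on $C$. Since $b$ is totally ramified of degree $d$ at $y_0$, with uniformiser $s$ on $B$ satisfying $t=s^d$, the base change $X^b$ acquires the local equation $xy=s^d$, the standard $A_{d-1}$ surface singularity. The minimal resolution $X_b\to X^b$ inserts a chain of $d-1$ smooth rational $(-2)$-curves joining the strict transforms of $\{x=0\}$ and $\{y=0\}$; matching conventions so that $\tilde v_i = w_{di}$ and $\tilde v_{i+1}=w_{d(i+1)}$, the exceptional components become $w_{di+1},\dots,w_{di+d-1}$.

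The core computation is the multiplicity of each $w_m$ in the Cartier pullback $p_b^{*}v_i$, which locally equals $\mathrm{div}_{X_b}(x)$. This is transparent in the toric realisation $A_{d-1}\cong \A^2/\mu_d$, where $\mu_d$ acts by $(u,v)\mapsto(\zeta u,\zeta^{-1}v)$ and $x=u^d$, $y=v^d$, $s=uv$. In the corresponding lattice the singular cone has rays $(0,1)$ and $(d,1)$, and its resolution is obtained by inserting the primitive rays $(j,1)$ for $1\le j\le d-1$; the order of vanishing of the character $x$ along the divisor $D_{(j,1)}$ equals $\langle (1,0),(j,1)\rangle = j$. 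Under the identifications $D_{(d,1)}\leftrightarrow w_{di}$, $D_{(0,1)}\leftrightarrow w_{d(i+1)}$ and $D_{(d-k,1)}\leftrightarrow w_{di+k}$ for $1\le k\le d-1$, this yields coefficient $d$ on $\tilde v_i$ and coefficient $d-k$ on $w_{di+k}$, with $w_{d(i+1)}$ not appearing. The symmetric analysis at the other node adjacent to $v_i$ contributes coefficient $d-k$ on $w_{di-k}$.

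An equivalent, more intrinsic route avoids toric language and uses intersection theory. The support of $p_b^{*}v_i$ lies in $\{w_{di-(d-1)},\dots,w_{di+(d-1)}\}$, and the projection formula gives $p_b^{*}v_i\cdot w_m = v_i\cdot (p_b)_{*}w_m$, which equals $v_i\cdot v_{m/d}$ when $d\mid m$ and $0$ otherwise. The resulting linear system in the unknown coefficients—essentially the extended Cartan matrix of $\tilde A_{nd-1}$ restricted to this support—is non-singular, and its unique solution is the formula asserted. The main obstacle is essentially bookkeeping: one must fix compatible cyclic orientations so that the local contributions at the two nodes adjacent to $v_i$ patch consistently along $\tilde v_i = w_{di}$, and verify that the coefficient $d$ there emerges unambiguously from either side. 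The identity $\sum_i p_b^{*}v_i = dF_b$, forced by $p_b^{*}t = s^d$, then provides a natural consistency check on the final formula.
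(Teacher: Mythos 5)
Your proof is correct, and it takes a genuinely different route from the paper's. The paper argues globally in the N\'eron--Severi lattice: it first records that the strict transforms carry multiplicity $d$ and that the multiplicities are bounded by $d$ (using effectivity and $p_b^*F_{x_0}=dF_{y_0}$), then derives the averaging recurrence $2a_j=a_{j-1}+a_{j+1}$ from the projection formula, pins down $a_{\pm 1}=d-1$ from $(p_b^*v_0)^2=-2d$, and finally eliminates any residual piece $D_2$ by negative definiteness of the orthogonal complement. You instead localize at each node, recognize the base-changed singularity $xy=s^d$ as the $A_{d-1}$ point $\A^2/\mu_d$, and read the multiplicities directly off the toric resolution as $\mathrm{ord}_{D_{(j,1)}}(\chi^{(1,0)})=j$; after matching the cyclic labelling so that the strict transform of $v_i$ is $w_{di}$, the formula falls out immediately, with $\sum_i p_b^*v_i=dF_b$ as a sanity check. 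The toric route is shorter and makes the answer visibly forced by the resolution geometry, at the cost of importing a piece of toric machinery the paper avoids; the paper's argument stays inside the intersection theory it has already set up, which fits the paper's style but requires the more intricate bookkeeping you flagged. Your secondary, purely intersection-theoretic route (projection formula plus the observation that the restricted Cartan matrix of type $A_{2d-1}$ is non-singular) is closer in spirit to the paper, but is cleaner in that it replaces the recurrence/self-intersection/definiteness chain by a single uniqueness statement; note, however, that it borrows the identification of the support of $p_b^*v_i$ from the local analysis, so it is not fully independent of the first route.
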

\begin{proof} Since $b$ is totally
  ramified at $y_0$ of degree $d$, $p_b^*F_{x_0}=dF_{y_0}$, where
  $F_{x_0}$ (resp. $F_{y_0}$) denotes the fibre of $p$ at $x_0$
  (resp. the fibre of $p_b$ at $y_0$).

For $i\in \Z/d\Z$, let $w_{k_i}$ denote the `inverse
  image divisor', i.e., the strict transform in $X_b$ of the inverse
  image   of $v_i$ in $X^b=X\times_C B$.   The multiplicity of $w_{k_i}$ 
in $p_b^*(v_i)$ is $d$. Further $p_b^*(v_i)$ is an effective divisor, and 
\[ dF_{y_0}= p_b^*F_{x_0}=\sum_{i\in \Z/d\Z}p_b^*(v_i).\]
Hence, 
\[ p_b^*(v_0) =dw_0 +\sum_{i\neq 0} a_iw_i, \quad \mbox{where}\quad 0\leq a_i\leq d.\]
Since $w_{k_j}$ occurs in $p_b^*(v_j)$ with multiplicity $d$, it follows
that $a_{k_j}=0$ for $j\neq 0$. 

The map $p_b: X_b\to X$ is a finite proper map.
The divisors $w_j$ for $j\neq k_i$ for
any $i$, map to a point under $p_b$ and are the exceptional divisors in $F_{y_0}$.  By the projection formula of intersection theory (\cite[Appendix A, p. 427]{H}),
\[p_b^*(w_jp_b^*(v_0))=p_b^*(w_j)v_0=0.\]
Since $p_b$ is finite and proper, it follows that $w_jp_b^*(v_0)=0$. Thus, 
\begin{equation}\label{eqn:average}
 2a_j= a_{j-1}+a_{j+1}, \quad  j\neq k_i \quad \mbox{for} \quad i\in \Z/n\Z.
\end{equation}
Since the multiplicities $a_j$ are bounded by $d$, if the multiplicity
$a_j=d$ for some $j$,then the neighbouring multiplicities $a_{j-1}$ and $a_{j+1}$ are also equal to $d$.  But $a_{k_i}=0$ for $i\neq 0$, and this
implies that the multiplicity of any exceptional divisor $w_j$
occuring in $p_b^*(v_0)$ is strictly less than $d$. 

Now, $(p_b^*v_0)^2=dv_0^2=-2d$. Since the exceptional divisors
interesect trivially with $p_b^*(v_0)$, and the inverse image divisors
$w_{k_j}$ have multiplicity zero in $p_b^*(v_0)$ for $j\neq 0$, we
obtain
\[ d\left( (w_0,dw_0) +a_1(w_0, w_1) +a_{-1}(w_0, w_{-1})\right)=-2d.\]
This implies, $a_1+a_{-1}= 2d-2$, and this is possible if and only if 
$a_1=a_{-1}= d-1$. From Equation (\ref{eqn:average}), it follows that 
for $0< i\leq d$, $a_i=a_{-i}=(d-i)$. 

Thus, $p_b^*v_0= D_1+D_2$, where 
$D_1=dw_{0}+\sum_{0<j<d}(d-j)(w_{-j}+w_{j})$, and the multiplicity
of the components $w_i$ for $|i|\leq d$ is zero in $D_2$. Hence 
the divisors $D_1$ and $D_2$ are orthogonal. The divisors 
$D_1$ and $D_2$ can be considered in the
negative definite space spanned  by the divisors
$w_0, \cdots, w_{d-1}, w_{d+1}, \cdots w_{nd-1}$. Since $D_1^2=-2d=(p_b^*v_0)^2$, this
implies $D_2=0$. Hence we obtain that 
\[p_b^*v_0 =dw_{0}+\sum_{0<j<d}(d-j)(w_{-j}+w_{j}),\]
and a similar expression holds for each $p_b^*v_i$:
\[p_b^*v_i=dw_{k_i}+\sum_{0<j<d}(d-j)(w_{k_i-j}+w_{k_i+j}).\]
Since the sum of these divisors is equal to $dF_{y_0}$, it follows
that $k_i$ are multiples of $d$. Since $p_b^*v_i$ and $p_b^*v_{i+1}$ have
non-zero intersection, we get $k_i=di$ for $i=0, \cdots, d$, and this
proves the proposition.  
\end{proof}

\section{A representation of the affine Weyl group \\ 
of $\tilde{A}_{n-1}$:
  Proof of Theorem \ref{repnaffineweyl}} \label{sec:repn}
Our aim is to show that the Picard-Lefschetz isometries based at
irreducible components of fibral divisors lift to universal
isometries. We first work out the underlying representation
theoretical aspect, which arise when we consider the action of the
reflections on the subspace of the N\'eron-Severi group contributed by
the components of a fibre. 

Corresponding to a fibration with local ramification degree $e$, we
define a homomorphism, say   $R_e$, of $W_n$ into $W_{ne}$.   We define
this representation on the generators, and verify the braid relations
are satisfied. We also need to check that this representation gives
the lift of the Picard-Lefschetz reflections to an universal
isometry. We first work out some of the linear algebra
considering the vectors $v(i,j)$ defined as in Definition
\ref{dfn:vij}. We use the notation from Section \ref{sec:theorems}. 

\begin{lemma}\label{vij}
With notation as in Definition (\ref{dfn:vij}), the following holds: 
\begin{enumerate}
\item $v(i,j)^2=-2$. In particular, the transformation, 
\[ s_{v(i,j)} (v)= v+(v(i,j),v)v(i,j),\quad v\in \V_n,\]
defines a reflection.
\item Suppose $v(i_1, j_1)\neq v(i_2, j_2)$ are distinct vectors of 
 equal length such that the intersection of their supports is
 non-empty. Then they are orthogonal. Equivalently the reflections  
$s_{v(i_1,j_1)}$ and $s_{v(i_2,j_2)}$ commute. 
\item Any vector $v$ with $v^2=-2$ is of the form $v(i,j)+rF$, where 
$F=\sum_{i\in \Z/n\Z}v_i$ is the `fibre' and $r\in \Z$. 
\item The following relation is satisfied by the reflections
  $s_{v(i,j)}$:
\begin{equation}\label{indeqnsvij}
s_{v(i,j)}=s_{v_j}s_{v(i,j-1)}s_{v_j}.
\end{equation}
\end{enumerate}
\end{lemma}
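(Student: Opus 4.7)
The plan is to treat each of the four assertions by direct computation using the structure of the intersection form on $\V_n$, where $v_i^2=-2$, $v_iv_{i+1}=1$, and $v_iv_j=0$ otherwise.

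For (1), I would expand $v(i,j)^2=\sum_k v_k^2+2\sum_{k<l}v_k\cdot v_l$. If $v(i,j)$ has length $l=j-i+1$, the self-intersection terms contribute $-2l$ and the adjacency pairs $v_kv_{k+1}$ contribute $2(l-1)$. The injectivity hypothesis on the integer representatives ensures $l\le n-1$ in the relevant cases (when $l=n$ one would instead obtain the fibre $F$ with $F^2=0$), so the inner edges do not loop around, and the sum is $-2l+2(l-1)=-2$. That $s_{v(i,j)}$ is a genuine reflection then follows from the standard formula.

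For (2), after relabeling, I may assume $i_1<i_2$; equality of lengths forces $j_1<j_2$, and non-empty support intersection forces $i_2\le j_1$. I would compute $v(i_1,j_1)\cdot v(i_2,j_2)$ by counting the three contributing configurations: pairs $(a,a)$ in the overlap $[i_2,j_1]$ giving $-2(j_1-i_2+1)$, and the two families of adjacent pairs $(a,a+1)$ and $(a,a-1)$ straddling the two supports, giving $j_1-i_2+2$ and $j_1-i_2$ respectively (with the small modification $j_1-i_2=0$ when $i_2=j_1$ and the $(a,a-1)$ range is empty). Summing yields $0$ in all cases. For (3), write $v=\sum a_iv_i$ and use the identity
\[v^2=-\sum_{i\in\Z/n\Z}(a_{i+1}-a_i)^2,\]
so $v^2=-2$ forces exactly two cyclic differences $d_i=a_{i+1}-a_i$ to be nonzero with $d_i^2=1$; the cyclic constraint $\sum d_i=0$ then pins one to $+1$ and the other to $-1$. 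The resulting step-function profile for $(a_i)$ exhibits $v$ as $rF+v(i,j)$ for some $r\in\Z$ and indices $i,j$.

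For (4), I would compute $(v(i,j-1),v_j)$: the only non-vanishing contribution in the sum over the support of $v(i,j-1)$ comes from the adjacency $v_{j-1}\cdot v_j=1$ (using length $<n$ to rule out a wrap-around contribution from $v_i\cdot v_j$), so $s_{v_j}(v(i,j-1))=v(i,j-1)+v_j=v(i,j)$. Applying the general identity $s_{gw}=g\,s_w\,g^{-1}$ with $g=s_{v_j}$ and $w=v(i,j-1)$, together with $s_{v_j}^{-1}=s_{v_j}$, yields the desired relation $s_{v(i,j)}=s_{v_j}s_{v(i,j-1)}s_{v_j}$. The only mildly delicate step is (2), where the bookkeeping of adjacent pairs must be done carefully in the boundary case $i_2=j_1$; elsewhere the arguments are essentially formal manipulations with the intersection form.
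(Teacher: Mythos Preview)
Your proof is correct, and for parts (2)--(4) you take genuinely different routes from the paper.

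For (1) your argument is the same as the paper's. For (2), the paper is slicker: setting $i_2=i_1+k$ (so $j_2=j_1+k$), it decomposes the inner product as $v_{i_2-1}v_{i_2}+v(i_2,j_1)^2+v_{j_1}v_{j_1+1}=1-2+1=0$, invoking part (1) for the middle term; your direct pair-counting reaches the same conclusion with more bookkeeping but no appeal to (1). For (3), your identity $v^2=-\sum_{i}(a_{i+1}-a_i)^2$ is cleaner than the paper's method, which splits $v=v_+-v_-$ into positive and nonpositive parts, argues one must vanish, and then peels off a $v(k,l)$ and computes further to force the remainder to zero; your step-function argument from the two nonzero cyclic differences is more direct and arguably more conceptual. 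For (4), the paper reduces to the two-dimensional subspace spanned by $v(i,j-1)$ and $v_j$ and calls the identity ``classical''; your use of the conjugation formula $s_{g(w)}=g\,s_w\,g^{-1}$ together with the explicit computation $s_{v_j}(v(i,j-1))=v(i,j)$ is more self-contained.

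One remark that applies equally to the paper's proof: in (2), both your count and the paper's decomposition tacitly assume no cyclic wraparound adjacency between the far ends of the two intervals (i.e.\ no contribution from $v_{i_1}\cdot v_{j_2}$ when $j_2-i_1\equiv -1\pmod n$). This is automatic in the lemma's actual application---vectors of length $e$ inside $\V_{ne}$ with $n\ge 3$---but can fail in full generality (e.g.\ $v(0,2)\cdot v(2,4)=1$ in $\V_5$). You may wish to note the implicit hypothesis that the combined span is strictly less than $n-1$.
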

\begin{proof} For the proof of (1),
\begin{align*}
v(i,j)^2&=\sum_{k=i}^j
v_k^2+v_iv_{i+1}+v_{j}v_{j-1}+\sum_{k=i+1}^{j-1}(v_kv_{k-1}
+v_kv_{k+1})\\
&=-2(j-i+1)+2+2(j-i-1)=-2.
\end{align*}
To prove (2), let $(i_1,j_1)=(i,j)$, and $i_2=i+k$ with $k>0$.
 The hypothesis 
imply that $k\leq (j-i)$, and $j_2=j+k$. The inner product, 
\begin{align*}
v(i,j)v(i+k,j+k)&=v_{i+k-1}v_{i+k}+v(i+k,j)^2+v_jv_{j+1}\\
& =1-2+1=0.
\end{align*}
To prove (3), write $v=\sum_ia_iv_i+rF$, where we can assume by
absorbing into the fibre component, that not all $a_i$ have the same
sign. Write $v=v_+-v_{-}$, where $v_+=\sum_{a_i>0}a_iv_i$ and 
$v_{-}=-\sum_{a_i\leq 0}a_iv_i$. Then, 
\[ v^2=v_+^2+v_{-}^2-2v_+v_{-}.\] 
If both $v_+$ and $v_{-}$ are non-zero, then $v^2<-2$. Since $v(i,j)+v(j,i)=F$, by working with $-v$ if required,
we can assume that $v_{-}=0$. 
Assume now that the support of $v$, the set of indices $i$
such that $a_i>0$ is an interval of the form
$[k,l], ~0\leq k <l <n$. Write  $v=v(k,l)+v'$, where
$v'=\sum_{k\leq i\leq l}b_iv_i,~b_i\geq 0$. Now,
\begin{align*}
  v^2&=v(k,l)^2+2v(k,l)v'+v'^{2}\\
  &=-2-2\sum_{k\leq i\leq l}b_i+b_k+b_l+2\sum_{k<i<l}b_i+v'^{2}\\
  &=-2-b_k-b_l+v'^2.
\end{align*}
Hence if $v^2=-2$, then $v=v(k,l)$.
This equation also implies that for $v$ as above $v^2\leq -2$.
Thus if $v^2=-2$, then $v$ cannot be written as a sum of two vectors with disjoint support. This proves Part (3). 

For (4), the vectors $v(i,j-1)$ and $v_j$ generate a two
dimensional non-degenerate subspace. On the orthogonal complement of
this subspace, both the reflections act as identity, and hence 
 it suffices to verify the formula
\[ s_{v(i,j)}=s_{v_j}s_{v(i,j-1)}s_{v_j},\]
in the two dimensional situation. Since $v(i,j-1)$ and $v_j$ are
vectors with self-intersection $-2$ and $v(i,j-1).v_j=1$, this is classical.
\end{proof}

\subsection{Cyclic permutations}
Before we proceed with the proof of Theorem
\ref{repnaffineweyl}, it is convenient to represent the transformations 
in terms of standard permutation symbols $(j_1\cdots j_k)$.
We consider permutations on the cyclic set $\Z/n\Z$. 
Assign the permuatation $s_{v_i}\mapsto (i, i+1)$. This gives a
representation of the affine Weyl group $W_n$ as permutations on the
cyclic set $\Z/n\Z$. It is clear that the braid relations are
satisfied.

We observe that dropping one of the generators, 
for example $s_0$ from the generators of
the affine Weyl group, the remaining generators $s_1, \cdots, s_{n-1}$ 
satisfy the braid relations defining the symmetric group $S_{n+1}$ (the permutation group on $(n+1)$-symbols) on
$n$-generators:  
\[
m_{ii}=1, ~m_{i(i+1)}=3 \quad \mbox{and}\quad m_{ij}=2 \quad
\mbox{for} \quad |i-j|\geq 2, \quad i, j\in \{1, 2, \cdots, n\}.
\]
The above permutation representation restricted to any of the
symmetric groups obtained by omitting one of the generators $s_i$ is injective, since the group generated is not
of the form $\Z/2\Z$ if the number of generators is at least two.
\begin{definition} 
An expression (or an equation) in the free group 
on the generators of the affine Weyl
group $W_n$ (or in $W_n$) 
is said to be {\em local}, if it does not involve all the
generators in a non-trivial manner. 
\end{definition}
When the equation or expression is local, then to check its properties
or the validity of the equation in the affine Weyl group, it is
sufficient to work with the (local) symmetric group generated by the
generating reflections of the affine Weyl group involved in the
equality. In such a case, it is sufficient to work within the group of cyclic permutations. 

We now recall the definition of the representation 
$R^e_n:W_n\to W_{ne}$. To keep track of the difference, the standard basis of $\V_n$ is given by $v_i, ~i\in \Z/n\Z$ and that of  $\V_{ne}$
is given by $w_i, ~i\in \Z/ne\Z$. The representation  $R^e_n$
 is defined
on the generators $s_{v_0}, \cdots, s_{v_{n-1}}$ of $W_n$ as: 
\begin{equation}\label{defnrepn}
 R^e_n(s_{v_k})=\prod_{w(i,j)\in I(n,e,k)}s_{w(i,j)},
\end{equation}
where the set 
$I(n,e, k)$ is the collection of vectors of the form
$w(i,j)\in \V_{ne}$ of length $e$ and support containing $ek$. 

For the proof, especially of parts (2) and (3) of Theorem  \ref{repnaffineweyl}, we observe that the various statements
are local in the above sense, that it is enough to work with the
symmetric group and hence enough to work with the permutations. This
is because if $e>1$, then the elements
$R^e_n(s_{v_j})$ are in some appropriate symmetric groups by Part (4)
of Lemma \ref{vij}.

\subsection{An inductive definition}
We first need to
check that $ R^e_n(s_{v_k})$ can be written in terms of the generators
$s_{w_i}$. The following inductive definition of  $ R^e_n(s_{v_k})$ is
arrived at trying to ensure that the lifts satisfy Part (1) of
Theorem \ref{repnaffineweyl}, of being compatible with the base
change map on the N\'eron-Severi groups given by Proposition
\ref{ssfibrepullback}. 

For $0\leq j <n, ~1\leq i\leq e$, 
define the following isometries of $\V_{ne}$: 
\begin{align}
T(v_j,0)& =U(v_j,0)=s_{w_{je}}\\
U(v_j,i)& =s_{w_{je-i}}s_{w_{je+i}}\\
T(v_j,i)&=U(v_j,0)\cdots U(v_j,i)=T(v_j,i-1)U(v_j,i)\\
S(v_j,i)& =T(v_j,i-1)\cdots T(v_j,0)=T(v_j,i-1)S(v_j,i-1).
\end{align}

Since $n\geq 3$ and $e\geq 1$, an inductive argument implies that for
any given $j$ and $i$, the expressions $U(v_j,i), T(v_j,i)$ and
$S(v_j,i)$ are all local. 

 \begin{lemma}\label{permutnot}
With the assignment $s_{w_i}\mapsto (i, i+1)$, 
\begin{align}
T(v_j,k)&=(je, je-1, \cdots, je-k, 1,\cdots, je+k+1)\\
S(v_j,k)&=(je-k+1,je+1)(je-k+2,je+2)\cdots (je,je+k).
\end{align}
\end{lemma}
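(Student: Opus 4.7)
The plan is a simultaneous induction on $k$, exploiting the recursive definitions $T(v_j,k)=T(v_j,k-1)U(v_j,k)$ and $S(v_j,k)=T(v_j,k-1)S(v_j,k-1)$ together with the cyclic permutation realisation $s_{w_i}\mapsto(i,i+1)$ introduced earlier in Section~\ref{sec:repn}. The first step is a locality check: for fixed $j$ and for $i\leq e$, each of the expressions $U(v_j,i)$, $T(v_j,i)$, $S(v_j,i)$ involves only the generators $s_{w_{je-i}},\ldots,s_{w_{je+i}}$. Since $n\geq 3$, these generators do not exhaust $W_{ne}$, so by the remark that local equations may be verified in the corresponding symmetric subgroup, the cyclic permutation representation is faithful on the subgroup they generate, and it suffices to verify both identities as permutations of $\Z/ne\Z$.

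The base cases are immediate: $T(v_j,0)=s_{w_{je}}$ maps to $(je,je+1)$, and $S(v_j,1)=T(v_j,0)S(v_j,0)=(je,je+1)$ since $S(v_j,0)$ is the empty product. For the induction step for $T$, assuming the formula for $T(v_j,k-1)$, one writes
\[
T(v_j,k)=T(v_j,k-1)\cdot (je-k,\,je-k+1)(je+k,\,je+k+1),
\]
and checks by tracing each element that the two right-hand transpositions splice $je-k$ into the cycle immediately after $je-k+1$, and $je+k+1$ immediately after $je+k$, producing the claimed $(2k+3)$-cycle. For the induction step for $S$, one combines the formula for $T(v_j,k-1)$ just established with the inductive formula for $S(v_j,k-1)$, and traces how the elements $je-k+m$ and $je+m$ move under the composition $T(v_j,k-1)S(v_j,k-1)$: the inner transpositions of $S(v_j,k-1)$ shift by one position along each branch, and the new outermost transpositions $(je-k+1,je+1)$ and $(je,je+k)$ appear at the two ends.

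The main obstacle, modest as it is, is purely combinatorial bookkeeping: one must carefully distinguish elements on the left branch, the right branch, and the two new boundary points $je-k$ and $je+k+1$ (respectively $je-k+1$ and $je+k$ in the $S$-computation), and confirm that no wrap-around occurs modulo $ne$. The latter is settled once and for all by the locality observation of the first paragraph, which is why the hypothesis $n\geq 3$ is used; without it, the cycles written above would collide with themselves through the cyclic identification.
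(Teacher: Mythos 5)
Your proposal is correct and follows essentially the same route as the paper: induction on $k$ via the recursions $T(v_j,k)=T(v_j,k-1)U(v_j,k)$ and $S(v_j,k)=T(v_j,k-1)S(v_j,k-1)$, carried out in the cyclic permutation realization (the paper simply sets $j=0$, writes $U(v_0,k)=(k,k+1)(-k,-k+1)$, and performs the same splicing computation). Two trivial remarks: the spliced cycle for $T(v_j,k)$ has $2k+2$ entries, not $2k+3$, and faithfulness of the permutation realization is not actually needed here, since the lemma asserts an identity between the permutation images themselves; locality is only needed, as you note, to ensure the listed indices are distinct modulo $ne$.
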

\begin{proof}
The proof is by induction on $k$, and we carry it out for $j=0$.
 The transformation 
$U(v_0,k)$ is given in permutation notation as, 
\[ U(v_0, k)=(k, k+1)(-k, -k+1).\]
Assuming that the lemma has been proved for $k-1$. Then
\begin{align*}
T(v_0,k)& =T(v_0,k-1)U(v_0,k)\\
&=(0, -1, \cdots, -k+1, 1,\cdots, k)(k, k+1)(-k, -k+1)\\
&=(0, -1, \cdots, -k, 1,\cdots, k+1).
\end{align*}
Similarly, assuming that the proposition holds for $S(v_0,k-1)$, 
\begin{align*}
S(v_0,k)& =T(v_0,k-1)S(v_0,k-1)\\
&=(0, -1, \cdots, -k+1, 1,\cdots, k)(-k+2,1)(-k+3,2)\cdots (0,k-1)\\
&=(-k+1,1)(-k+2,2)\cdots (0,k).
\end{align*}
This proves the lemma. 
\end{proof}

\begin{lemma}
In $W_{ne}$, the reflection
 $s_{w(p,q)},~ p<q, ~|q-p|\leq e$ corresponding to the vector $w(p,q)=w_p+w_{p+1}+\cdots +w_{q}$ is local.  The cyclic permutation corresponding to  $s_{w(p,q)}$ is the permutation $(p,q+1)$. 
\end{lemma}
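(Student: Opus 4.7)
The plan is to establish both assertions simultaneously by induction on the difference $q-p$, using the recursive identity from Part (4) of Lemma \ref{vij}, namely
\[
s_{w(p,q)} = s_{w_q}\,s_{w(p,q-1)}\,s_{w_q}.
\]
This identity was proven in the setting of $\V_n$ with indexing $v(i,j)$, but the proof carries over verbatim to $\V_{ne}$ with the vectors $w(p,q)$, since it relied only on the intersection numbers among $w(p,q-1)$ and $w_q$, which satisfy $w(p,q-1)^2 = -2$, $w_q^2 = -2$, and $w(p,q-1)\cdot w_q = 1$.

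The base case $q = p$ is immediate: $s_{w(p,p)} = s_{w_p}$, which is a single generator (so trivially local), and under the assignment $s_{w_i}\mapsto (i,i+1)$ it is sent to the transposition $(p,p+1)$, matching the desired formula $(p,q+1)$.

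For the inductive step, assume the statement for $w(p,q-1)$. Then $s_{w(p,q-1)}$ can be written as a word in the generators $s_{w_p}, s_{w_{p+1}}, \ldots, s_{w_q}$, and via the recursion above the same is true of $s_{w(p,q)}$. The total number of generators involved is at most $q-p+1 \leq e+1$. Since we are in $W_{ne}$ with $n\geq 3$ and $e\geq 1$, one has $e+1 < ne$, so at least one generator of $W_{ne}$ is absent from the expression; hence $s_{w(p,q)}$ is local. For the permutation computation, the inductive hypothesis gives $s_{w(p,q-1)} \mapsto (p,q)$, so
\[
s_{w(p,q)} \mapsto (q,q+1)(p,q)(q,q+1) = (p,q+1),
\]
which is exactly the required transposition.

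There is essentially no serious obstacle: the recursion from Lemma \ref{vij}(4) reduces the claim to a one-line conjugation computation in the symmetric group, and locality follows from the hypothesis $|q-p|\leq e$ together with $n\geq 3$. The only point requiring slight care is verifying that the locality argument gives a strict inequality (so that the equation really holds inside a symmetric subgroup, justifying the transfer to permutation notation), but this is handled by the bound $e+1 < ne$.
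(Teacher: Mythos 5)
Your proposal is correct and follows essentially the same route as the paper: induction on $q-p$, the recursion $s_{w(p,q)}=s_{w_q}s_{w(p,q-1)}s_{w_q}$ from Part (4) of Lemma \ref{vij}, and the conjugation identity $(q,q+1)(p,q)(q,q+1)=(p,q+1)$. The only difference is that you spell out the generator count $q-p+1\leq e+1<ne$ justifying locality, which the paper leaves implicit.
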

\begin{proof} The proof is by induction on $|q-p|$. The case $q=p$
  follows from the definition. By Part (4) of
  Lemma \ref{vij}, 
\[ s_{w(p,q)}=s_{w_q}s_{w(p,q-1)}s_{w_q}. \]
This implies the locality of  $s_{w(p,q)}$ in the given range. Translating
to the permutation notation, we see that 
\[ (q, q+1)(p,q)(q,q+1)=(p,q+1).\]
This proves the lemma. 
\end{proof}

Combining the two foregoing lemmas and the definition of $R^e_n(s_{v_j})$,
we have the following corollary, 
\begin{corollary}
  For $0\leq j <n$, the permutation realization of  $R^e_n(s_{v_j})$ is
  \[((j-1)e+1,je+1)((j-1)e+2,je+2)\cdots (je,(j+1)e).\]
 In particular, $R^e_n(s_{v_j})\in W_{ne}$.  
\end{corollary}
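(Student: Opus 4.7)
The plan is to deduce the corollary directly from the two preceding lemmas by explicitly enumerating the set $I(n,e,j)$ and tracking what each reflection $s_{w(p,q)}$ becomes under the cyclic-permutation realization $s_{w_i}\mapsto (i,i+1)$.

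First I would describe $I(n,e,j)$ concretely. A vector $w(p,q)\in \V_{ne}$ has length $e$ iff $q=p+e-1$, and its support contains $ej$ iff $p\leq ej\leq p+e-1$, i.e.\ $p\in\{ej-e+1, ej-e+2,\dots,ej\}$. Hence $I(n,e,j)$ consists of exactly the $e$ vectors $w(ej-e+a,\,ej+a-1)$ for $a=1,\dots,e$. Since all these vectors have equal length and share the index $ej$ in their supports, Part~(2) of Lemma~\ref{vij} implies they are pairwise orthogonal, so the corresponding reflections commute; in particular the product $R^e_n(s_{v_j})=\prod_{w\in I(n,e,j)}s_w$ from~\eqref{defnrepn} is unambiguous.

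Next I would apply the preceding lemma (which computes $s_{w(p,q)}$ for $|q-p|\leq e$ in permutation notation) to each factor. For the vector $w(ej-e+a,\,ej+a-1)$, with $|q-p|=e-1$, the lemma gives the transposition $(ej-e+a,\,ej+a)$. Multiplying out over $a=1,\dots,e$ (and noting that the underlying transpositions involve the two disjoint index sets $\{(j-1)e+1,\dots,je\}$ and $\{je+1,\dots,(j+1)e\}$, so they commute as permutations as well) yields
\[
R^e_n(s_{v_j}) \;=\; \bigl((j-1)e+1,\,je+1\bigr)\bigl((j-1)e+2,\,je+2\bigr)\cdots\bigl(je,\,(j+1)e\bigr),
\]
which is the claimed formula.

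Finally, for the ``in particular'' clause, each $s_{w(p,q)}$ with $|q-p|\leq e$ lies in $W_{ne}$ by the preceding lemma (which asserts locality, i.e.\ expressibility in the generators $s_{w_i}$), so $R^e_n(s_{v_j})$, being a product of such elements, belongs to $W_{ne}$. There is no real obstacle here—the work was already done in the two preceding lemmas; the only care needed is bookkeeping the indexing $p=ej-e+a$ and recognizing that the ``shared support point $ej$'' condition is exactly what triggers Part~(2) of Lemma~\ref{vij} and guarantees that the product is a well-defined commuting composition of reflections.
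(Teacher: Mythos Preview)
Your proof is correct and follows essentially the same route as the paper: enumerate $I(n,e,j)$ explicitly as the product $s_{w((j-1)e+1,je)}\cdots s_{w(je,(j+1)e-1)}$, then apply the preceding lemma to convert each factor to the transposition $(p,q+1)$. The only cosmetic difference is that the paper identifies the resulting product of transpositions as $S(v_j,e)$ from Lemma~\ref{permutnot}, whereas you write out the product directly; both arrive at the same formula, and your justification of the ``in particular'' clause via locality of each $s_{w(p,q)}$ is exactly what the paper intends.
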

\begin{proof}
  By definition,
  \[  R^e_n(s_{v_j})=s_{w((j-1)e+1,je)}s_{w((j-1)e+2,je+1)}\cdots s_{w(je,(j+1)e-1)}.\]
The permutation realization of the right hand side is nothing more than   
\[S(v_j, e)=((j-1)e+1,je+1)((j-1)e+2,je+2)\cdots (je,(j+1)e).\]
\end{proof}

\begin{proof}[Proof of Part (1) of Theorem \ref{repnaffineweyl}.]
We want to show that the lift $R^e_n(s_{v_j})$ is compatible with the
base change map $p_b^*: \V_n\to \V_{ne}$, 
\begin{equation}\label{basechangecompat}
  R^e_n(s_{v_i})(p_b^*(v))=p_b^*(s_{v_i}(v)), \quad i\in \Z/n\Z, ~~v\in
\V_n,
\end{equation}
where $p_b^*$ is defined on the generators as, 
\begin{equation} \label{b*}
p_b^*(v_k)=ew_{ke}+\sum_{i=1}^{e-1}(w_{ke-i}+w_{ke+i}),
\end{equation}
as dictated by Proposition \ref{ssfibrepullback}.
\begin{lemma}\label{S}
\begin{enumerate}
\item For $k\geq 1$,
\begin{equation}\label{S.1}
R^e_n(s_{v_j})(w_{je})= -\sum_{i=(j-1)e+1}^{(j+1)e-1}w_i.
\end{equation}

\item  For $e<|i-je|$,
\begin{equation}\label{S.2}
R^e_n(s_{v_j})(w_{i})=w_{i}.
\end{equation}

\item 
\begin{equation}\label{S.3}
R^e_n(s_{v_j})(w_{(j+1)e})=\sum_{i=0}^ew_{je+i} \quad \mbox{and}\quad
R^e_n(s_{v_j})(w_{(j-1)e})=\sum_{i=0}^ew_{je-i}.
\end{equation}

 \item For $0<i<e$, 
\begin{equation}\label{S.4}
R^e_n(s_{v_j}) (w_{je-i})=w_{(j+1)e-i} \quad \mbox{and}\quad 
R^e_n(s_{v_j})(w_{je+i})=w_{(j-1)e+i}.
\end{equation}
\end{enumerate}
\end{lemma}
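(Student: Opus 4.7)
My plan is to exploit that $R^e_n(s_{v_j})$ is a product of mutually commuting Picard-Lefschetz reflections, thereby reducing each identity in the lemma to a short inner-product computation.

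By the definition of $R^e_n$ and of $I(n,e,j)$,
\[R^e_n(s_{v_j}) \;=\; \prod_{r=1}^{e} s_{w(p_r,\,q_r)}, \qquad (p_r,q_r) \;:=\; (ej-e+r,\; ej+r-1),\]
so the $r$-th factor is the reflection based at a length-$e$ interval sliding one step to the right as $r$ increases. Any two of these supports share an interval of length at least $e-1 \geq 1$ when $e \geq 2$, so by Part (2) of Lemma \ref{vij} the vectors $w(p_r,q_r)$ are pairwise orthogonal and the reflections pairwise commute. A straightforward induction on the number of factors then shows that for mutually orthogonal $v_r \in \V_{ne}$ with $v_r^2 = -2$,
\[\Bigl(\prod_{r} s_{v_r}\Bigr)(u) \;=\; u \;+\; \sum_{r} \langle u, v_r\rangle\, v_r, \qquad u \in \V_{ne}.\]
Applied to $u = w_k$, this reduces each of (\ref{S.1})--(\ref{S.4}) to tabulating the inner products $\langle w_k, w(p_r,q_r)\rangle$, which depend only on the position of $k$ relative to $[p_r,q_r]$: the value is $-1$ if $k$ is an endpoint of a nontrivial interval, $-2$ if the interval is $\{k\}$, $+1$ if $k \in \{p_r - 1,\, q_r+1\}$, and $0$ otherwise.

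The four claims then follow from short case analyses. For (\ref{S.2}), $|k-ej| > e$ places $k$ outside every support and every neighbor, so all inner products vanish and $w_k$ is fixed. For (\ref{S.1}), $k = ej$: only $r=1$ and $r=e$ contribute, as the right endpoint of $w(p_1,q_1)$ and the left endpoint of $w(p_e,q_e)$ respectively, each with inner product $-1$; summing gives exactly $-\sum_{i=(j-1)e+1}^{(j+1)e-1} w_i$. For (\ref{S.3}), $k = (j+1)e$ is a neighbor only of $w(p_e,q_e)$, so a single $+1$ contributes and yields $w_{(j+1)e} + w(je,(j+1)e-1) = \sum_{i=0}^{e} w_{je+i}$. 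For (\ref{S.4}), $k = ej+i$ with $0 < i < e$: only $r=i$ (neighbor, contribution $+1$) and $r=i+1$ (right endpoint, contribution $-1$) are active, and the telescoping difference $w(p_i,q_i) - w(p_{i+1},q_{i+1}) = w_{ej-e+i} - w_{ej+i}$ combines with $w_{ej+i}$ to leave $w_{(j-1)e+i}$.

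The remaining halves of (\ref{S.3}) and (\ref{S.4}) concerning $w_{(j-1)e}$ and $w_{ej-i}$ follow by the same bookkeeping, or more cleanly from the reflection symmetry $r \leftrightarrow e+1-r$ of the family $\{w(p_r,q_r)\}$. The edge case $e=1$ reduces to the single reflection $s_{w_{ej}}$ and is immediate. The main obstacle here is purely clerical: keeping track of which of the $e$ reflections contribute nonzero inner products for each basis vector $w_k$ without sign or index errors. Once the commuting-reflections identity above is in place, no conceptual difficulty remains.
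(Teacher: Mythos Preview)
Your proof is correct and follows essentially the same approach as the paper: both identify, for each basis vector $w_k$, which of the $e$ reflections in $R^e_n(s_{v_j})$ act nontrivially, and then compute directly. Your packaging via the identity $\bigl(\prod_r s_{v_r}\bigr)(u)=u+\sum_r\langle u,v_r\rangle v_r$ for pairwise orthogonal $v_r$ streamlines the bookkeeping a bit compared to the paper, which applies the two active reflections sequentially in each case; one small slip is that the overlap of the supports for $r=1$ and $r=e$ has length $1$, not $e-1$, but nonemptiness is all that Lemma~\ref{vij}(2) requires, so the argument stands.
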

\begin{proof} 
The vectors $w(i,j)=w_i+\cdots +w_j$ are orthogonal to all the base
vectors $w_k$, except when $k=i-1, i, j, j+1$. In these cases, 
\[ (w(i,j), w_{i-1})=1, \quad 
(w(i,j), w_{i})=-1, \quad (w(i,j), w_{j})=-1,\quad (w(i,j), w_{j+1})=1.\]
To simplify the indices, we prove the statement taking
  $j=0$. We have, 
 \[ R^e_n(s_{v_0})=s_{w(-e+1,0)}s_{w(-e+2,1)}\cdots s_{w(0,e-1)}.\]
For proving (1), all the reflections except $s_{w(-e+1,0)}$ and
$s_{w(0,e-1)}$ fix the vector $w_0$. Hence, 
\[
\begin{split}
R^e_n(s_{v_0})(w_{0})&=s_{w(-e+1,0)}s_{w(0,e-1)}(w_0)\\
&=s_{w(-e+1,0)}\left(w_0+(w(0,e-1), w_0)w(0,e-1)\right)\\
&=s_{w(-e+1,0)}(w_0-w(0,e-1))=-s_{w(-e+1,0)}(w_1+\cdots w_{e-1})\\
&=-\left(w_2+\cdots w_{e-1}+w_1+ (w_1,w(-e+1,0))w(-e+1,0)\right)\\
&=-\sum_{i=-e+1}^{e-1}w_i.
\end{split}
\]
For the proof of Part (2), we observe that the isometry
$R^e_n(s_{v_0})$ involves only the
Picard-Lefschetz reflections $s_{w_l}$ for $|l|<e$. Each one of
these reflections fixes $w_i$, since  $|l|+1\leq e <|i|$. Hence 
\[R^e_n(s_{v_0}) (w_{i})=w_{i} \quad |i|>e.\]
For the proof of Part (3), reasoning as in the proof of Part (1), 
\[
\begin{split}
R^e_n(s_{v_0})(w_{e})& =s_{w(0,e-1)}(w_e)=w_e+(w_e, w(0,e-1))w(0,e-1)\\
&=\sum_{i=0}^ew_{i}.
\end{split}
\]
The proof of the other equality follows in a similar manner. 

To prove Part (4), we observe that the only reflections occuring in  
$R^e_n(s_{v_0})$ not fixing $w_i$ are the reflections based on the
vectors $w(i-e, i-1)$ and $w(i-e+1, i)$. These vectors are
orthogonal. Thus, 
\[
\begin{split}
R^e_n(s_{v_0})(w_{i})&=s_{w(i-e,i-1)}s_{w(i-e+1, i)}(w_i)
=s_{w(i-e,i-1)}(w_i-w(i-e+1, i))\\
&=w_i+(w(i-e,i-1),w_i)w(i-e,i-1)\\
&~~~~-w(i-e+1, i)-(w(i-e+1, i),w(i-e,i-1))w(i-e, i+1)\\
&=w_i+w(i-e,i-1)-w(i-e+1, i)\\
&=w_{i-e}.
\end{split}
\]
\end{proof}
We now establish Equation \ref{basechangecompat}. To do this, we do it
for $j=0$, and take $v$ to be one of the basis vectors. 
The Picard-Lefschetz isometries  $s_{w_k}$ for 
$0<|k|<e$ involved
in the definition of $R^e_n(s_{v_0})$ correspond to exceptional divisors. As in the proof of Proposition \ref{ssfibrepullback},  the divisors $w_k$ for $0<|k|<e$ are exceptional, and hence do not intersect the pullback divisors $p_b^*(v_i)$. Hence
the reflections $s_{w_k}$ for $0<|k|<e$ fix  $p_b^*(v_i)$. 
The reflection $s_{w_0}$ fixes the pullback vectors $p_b^*(v_i)$
for $|i|>1$. 
Hence  $R^e_n(s_{v_0})$  fixes $p_b^*(v_i)$ when  $|i|>1$and 
the theorem is proved for such basis vectors.  

Hence we are reduced to checking the commutativity for the basis vectors $v_0,
~v_1$ and $v_{-1}$.  Using various parts from Lemma \ref{S}, we obtain
\begin{align*}
R^e_n(s_{v_0}) (p_b^*(v_0))&=R^e_n(s_{v_0})\left( ew_0+\sum_{i=1}^{e-1}
   (e-i)(w_{-i}+w_i)\right) \quad 
\left(\mbox{by Equation \ref{b*}}\right)\\
&=-e\left(\sum_{i=-(e-1)}^{e-1}w_{i}\right)+\sum_{i=1}^{e-1}(e-i)(w_{e-i}+w_{-(e-i)})
\quad \left(\mbox{by Equations \ref{S.1}, \ref{S.3}}\right)\\
&=-\left(ew_0+\sum_{i=1}^{e-1}
   (e-i)(w_{-i}+w_i)\right)=-p_b^*(v_0).
\end{align*}
This proves that 
\begin{equation}\label{commv0}
 R^e_n(s_{v_0})(p_b^*(v_0))=p_b^*(s_{v_0}(v_0))=-p_b^*(v_0).
\end{equation}

We now check the commutativity for the divisor $v_1$ (and the same
proof works  for $v_{-1}$). We can write, 
\[ p_b^*(v_1)=ew_e+\sum_{i=1}^{e-1}
   (e-i)(w_{e-i}+w_{e+i}).\]
By Lemma \ref{S}, 
\begin{align*}
R^e_n(s_{v_0})&\left(ew_e+\sum_{i=1}^{e-1}
   (e-i)(w_{e-i}+w_{e+i})\right)\\
&=eR^e_n(s_{v_0})(w_e)+
R^e_n(s_{v_0}) \left(\sum_{i=1}^{e-1}
   (e-i)w_{e-i}\right)+R^e_n(s_{v_0})\left(\sum_{i=1}^{e-1}
   (e-i)w_{e+i})\right)\\
&= e\sum_{i=0}^ew_{i}+ \sum_{i=1}^{e-1}(e-i)w_{i}+\sum_{i=1}^{e-1}
   (e-i)w_{e+i}\\
&= p_b^*(v_0)+ p_b^*(v_1).
\end{align*}
Hence we get, 
\begin{equation}\label{commv1}
R_n^e(s_{v_0})(p_b^*(v_1))=p_b^*(s_{v_0}(v_1))=p_b^*(v_0+v_1).
\end{equation}
This proves Part (1) of Theorem \ref{repnaffineweyl}.
 \end{proof}

\begin{proof}[Proof of Part (2) of Theorem \ref{repnaffineweyl}.]
We now show that $R_n^e$ defines a representation of $W_n$ to
$W_{ne}$. It follows from Part (2) of Lemma \ref{vij}, that the
transformations $s_{v(i,j)}$ appearing in the definition of
$R^e_n(s_k)$ are reflections that commute with each other. Hence, 
$R^e_n(s_k)^2=1$.

We need to check the braid relations are satisfied by
$R^e_n(s_k)$. For this, it is  convenient to work with the 
permutation realization of these isometries.
Since $n\geq 3$, given any $i, j \in \Z/n\Z$, the braid
relations involving $R^e_n(s_i)$ and $R^e_n(s_j)$ are local. Hence we
can work with the permutation representation of these expressions.  
We write down explicitly, the permutation realization of the 
transformations  $R^e_n(s_k)$ for $k=0, 1, m$: 
\[
\begin{split}
R^e_n(s_0)& =(-e+1,1)(-e+2,2)\cdots (0,e)\\
R^e_n(s_1)& =(1,e+1)(2,e+2)\cdots (e,2e)\\
R^e_n(s_m)& =((m-1)e+1,me+1)((m-1)e+2,me+2)\cdots (me,(m+1)e), 
\end{split}
\]
where we have used the equality sign to denote the realization 
as permutations on the set $\Z/ne\Z$.
The transpositions $(k, k+e)$ for $k\leq 0$ appearing in the realization of 
$R^e_n(s_0)$ and the transposition $(l, l+e)$ for $(m-1)e+1\leq l\leq me$ 
appearing in the realization  of 
$R^e_n(s_m)$ for $|m|\geq 2$ commute with 
each other. Hence it follows that $ R^e_n(s_0)$ and $R^e_n(s_m)$ for $|m|\geq 2$
commute. 

It remains to show that $(R^e_n(s_0)R^e_n(s_1))^3=1$. The transposition
$(k, k+e)$ for $k\leq 0$
commutes with the transpositions  $v(l, l+e)$ for $1\leq l\leq e$ 
except when $l=k+e$. The product $(k, k+e)(k+e, k+2e)$ is order $3$.  
Hence it follows
$(R^e_n(s_0)R^e_n(s_1))^3=1$.

A similar calculation applies by replacing the indices $0, 1 $ and
$m$, and this proves Part (2) of Theorem \ref{repnaffineweyl}. 
\end{proof}

\begin{proof}[Proof of Part (3) of Theorem \ref{repnaffineweyl}.]
We now want to prove that the family of representations of the affine
Weyl groups we constructed satisfy the composition relation: 
\[ R^f_{ne}\circ R_n^e=R_n^{ef}, \]
where $n, e, f$ are any natural numbers. This statement is the 
compatibility relation with respect to the composition of pullbacks
that is required of the universal Picard-Lefschetz isometries. 
We first compute the lifts $ R_n^e(s_{v(i,j)})$ 
of the reflection  $s_{v(i,j)}$ based at the vector $v(i,j)=v_i+\cdots
+v_j$: 
\begin{lemma} \label{lem:svijperm}
For $k\geq 1$, the permutation realization of  $R^e_n(s_{v(j,j+k-1)})$ is given by
\[ ((j-1)e+1, (j+k)e+1)\cdots (je,(j+k)e).  \]
\end{lemma}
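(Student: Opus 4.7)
The plan is to argue by induction on $k \geq 1$, leveraging the fact that $R^e_n$ has already been shown to be a group homomorphism in Part (2) of Theorem \ref{repnaffineweyl}, so that algebraic identities in $W_n$ push forward to identities in $W_{ne}$.

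For the base case $k=1$ one has $v(j, j+k-1) = v(j,j) = v_j$, and the claimed permutation coincides with the realization of $R^e_n(s_{v_j})$ established in the corollary immediately preceding the lemma. For the inductive step, I would invoke the relation
\[ s_{v(j,\, j+k-1)} \;=\; s_{v_{j+k-1}}\, s_{v(j,\, j+k-2)}\, s_{v_{j+k-1}} \]
furnished by Part (4) of Lemma \ref{vij}. Applying the homomorphism $R^e_n$ gives
\[ R^e_n\bigl(s_{v(j,\, j+k-1)}\bigr) \;=\; R^e_n(s_{v_{j+k-1}})\cdot R^e_n(s_{v(j,\, j+k-2)})\cdot R^e_n(s_{v_{j+k-1}}), \]
and the right-hand side is the conjugate of the permutation supplied by the inductive hypothesis by the explicit permutation $R^e_n(s_{v_{j+k-1}})$ coming from the corollary.

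The computation then reduces to applying $R^e_n(s_{v_{j+k-1}})$ to each index appearing in the transpositions produced by the inductive step, using the fact that conjugation of a disjoint-transposition product by $\sigma$ replaces each pair $(a,b)$ by $(\sigma(a), \sigma(b))$. Since $R^e_n(s_{v_{j+k-1}})$ swaps the block of indices running over $[(j+k-2)e+1,\, (j+k-1)e]$ with the block $[(j+k-1)e+1,\, (j+k)e]$ position-by-position and fixes all other indices in $\mathbb{Z}/ne\mathbb{Z}$, the first coordinates of the transpositions in $R^e_n(s_{v(j,\, j+k-2)})$, which lie in the range $[(j-1)e+1,\, je]$, are left undisturbed, while the second coordinates get uniformly shifted by $e$, producing the announced permutation.

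The main obstacle is purely bookkeeping: verifying that the block moved by the conjugating reflection meets the second coordinates of the inductive transpositions (and only those), and is disjoint from their first coordinates, so that the shift by $e$ takes effect cleanly. Special care is needed at the boundary indices of the two blocks (to confirm no spurious collisions or double applications occur) and, since the indexing is in $\mathbb{Z}/ne\mathbb{Z}$, in checking that the inductive step does not wrap around the cyclic ordering for the range of $k$ under consideration; once those finite checks are done, the formula falls out directly.
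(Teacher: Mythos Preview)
Your proposal is correct and follows essentially the same route as the paper: induction on $k$, base case from the preceding corollary, and inductive step via the conjugation identity $s_{v(j,j+k-1)}=s_{v_{j+k-1}}s_{v(j,j+k-2)}s_{v_{j+k-1}}$ from Part (4) of Lemma \ref{vij}, pushed through $R^e_n$ and computed as a conjugation of transposition products. The paper carries out exactly this argument (specializing to $j=0$), so there is nothing materially different to compare.
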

\begin{proof} The proof is by induction on $k$. We take $j=0$ and for
  $k=1$, the permutation realization of $R^e_n(s_{v_0})$ is
$(-e+1,1)(-e+2,2)\cdots (0,e)$. 
Assume that the lemma has been proved for $k-1$. By Part (4) of Lemma
\ref{vij}, $s_{v(0,k)}=s_{v_k}s_{v(0,k-1)}s_{v_k}$. Hence the
permutation  realization of $R^e_n(s_{v(0,k)})$ is given by,  
\begin{align*}
  &\{((k-2)e+1,(k-1)e+1)\cdots ((k-1)e,ke)\}\\
  & \{(-e+1, (k-1)e+1)\cdots (0,(k-1)e)\}\\
& \{((k-2)e+1,(k-1)e+1)\cdots ((k-1)e,ke)\}\\
=&(-e+1, ke+1)\cdots (0,ke),
\end{align*}
and this proves the lemma. 
\end{proof}
From the defintion of $R^e_n(s_{v_0})$, we get 
\[ R_{ne}^f(R_n^e(s_{v_0}))=R_{ne}^f(s_{w(-e+1,0)})\cdots R_{ne}^f(s_{w(0,e-1)}).\]
Upon substituting  $n=ne$ and $e=f$, in the equation given by 
Lemma \ref{lem:svijperm}, the permutation realization 
(as permutations on $\Z/nef\Z$) of  $R_{ne}^f(s_{w(j,j+e)})$ is, 
\[ ((j-1)f+1, (j+e)f+1)\cdots (je,(j+f)e).\]
Hence the permutation realization of $R_{ne}^f(R_n^e(s_{v_0}))$ is given by,  
\[ (-fe+1,1)(-fe+2,2)\cdots (0,fe),\]
which is equal to the permutation realization of 
$R_n^{ef}(s_{v_0})$.

As all these expressions are local, the equality as 
permutations establishes Part (3) of  Theorem \ref{repnaffineweyl} for the
reflection $s_{v_0}$. By symmetry it establishes for the other
generators. Since we know that the collection of maps $R_n^e: W_n\to
W_{ne}$ define homomorphisms  as $n$ and $e$ varies, this establishes
Part (3) of  Theorem \ref{repnaffineweyl}. 
\end{proof}
\begin{corollary}\label{cor:univisom}
Let $n\geq 3$. For any $x\in W_n$, the collection of elements
$R_n^e(x)\in W_{ne}$ are compatible isometries in the following sense:
for any natural numbers $e, ~f$ and $w_j\in \V_{ne}$, 
\[ R_{ne}^f(x)(p_b^*(w_j))=p_b^*( R_{n}^e(x)(w_j)), \]
where \[p_b^*(w_j)=fz_{jf}+\sum_{l=1}^{f-1}(f-l)(z_{fj-l}+z_{fj+l}),\]
is the base change map defined from $\V_{ne}\to \V_{nef}$ as in
Proposition \ref{ssfibrepullback}, with standard bases
$w_j, j\in \Z/ne\Z$ and $z_l, l\in \Z/nef\Z$ for $\V_{ne}$ and $\V_{nef}$
respectively. 
\end{corollary}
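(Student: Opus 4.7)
The plan is to deduce the corollary formally from parts (1)--(3) of Theorem \ref{repnaffineweyl} already proven above. First, since $R_n^e(x)$ lies in $W_{ne}$, I will interpret the left-hand side $R_{ne}^f(x)$ as $R_{ne}^f(R_n^e(x))$; Part (3) then identifies this with $R_n^{ef}(x)$. Thus the content of the corollary is the equality
\[ R_{ne}^f(R_n^e(x))(p_b^*(w_j)) = p_b^*(R_n^e(x)(w_j)), \quad x \in W_n, \; w_j \in \V_{ne}, \]
where $p_b^*$ is the degree-$f$ base change map $\V_{ne}\to\V_{nef}$ given in the statement.

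Next, I would reduce to the case in which the element of $W_{ne}$ being applied is a single \emph{standard} generator $s_{w_k}$. By Part (2), both $R_n^e$ and $R_{ne}^f$ are group homomorphisms. Thus a word decomposition $x = s_{v_{i_1}}\cdots s_{v_{i_r}}$ in the generators of $W_n$ yields $R_n^e(x) = R_n^e(s_{v_{i_1}})\cdots R_n^e(s_{v_{i_r}})$, and each factor is in turn a product of standard generators $s_{w_k}$ of $W_{ne}$ (by the very definition of $R_n^e(s_{v_k})$ together with Part (4) of Lemma \ref{vij}). Combining, $R_n^e(x)$ can be written as a word $s_{w_{k_1}}\cdots s_{w_{k_m}}$ in the standard generators of $W_{ne}$, and applying $R_{ne}^f$ gives $R_{ne}^f(R_n^e(x)) = R_{ne}^f(s_{w_{k_1}}) \cdots R_{ne}^f(s_{w_{k_m}})$. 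Since $p_b^*$ is linear, a straightforward induction on $m$ then reduces the required identity to the single-generator statement
\[ R_{ne}^f(s_{w_k})(p_b^* w) = p_b^*(s_{w_k}(w)), \quad w \in \V_{ne}, \]
applied in succession to $w = w_j,\; s_{w_{k_m}}(w_j),\; s_{w_{k_{m-1}}}s_{w_{k_m}}(w_j),\ldots$

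Finally, this single-generator identity is exactly Part (1) of Theorem \ref{repnaffineweyl}, but with $(n,e)$ replaced by $(ne,f)$: the roles of $\V_n$ and $\V_{ne}$ in Part (1) are played here by $\V_{ne}$ and $\V_{nef}$, and the base change map is the one in the statement. Part (1) supplies the identity on basis vectors, and linearity of $R_{ne}^f(s_{w_k})$, $s_{w_k}$, and $p_b^*$ extends it to all of $\V_{ne}$. I expect no essential obstruction: the corollary is a formal assembly of the three parts of Theorem \ref{repnaffineweyl}, separated from the theorem because it is precisely this packaged compatibility statement that will be invoked in the next section in order to promote the Picard-Lefschetz reflections to universal isometries of $UNS(\cE)$.
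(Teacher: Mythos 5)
Your proposal is correct and matches the paper's (implicit) argument: the corollary is stated without a separate proof precisely because it is the formal assembly you describe — read the left-hand side as $R_{ne}^f(R_n^e(x))=R_n^{ef}(x)$ via Part (3), write $R_n^e(x)$ as a word in the standard generators of $W_{ne}$ (definition of $R_n^e$ plus Lemma \ref{vij}(4), well-defined by Part (2)), and apply Part (1) of Theorem \ref{repnaffineweyl} at level $(ne,f)$ generator by generator, extending by linearity. No gaps.
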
 

\section{Universal isometries: Proof of Theorem \ref{upl}}
\label{sec:upl} We  now show that the Picard-Lefschetz reflections
define universal isometries of the family of N\'eron-Severi lattices
$NS(X_b)$ as $b$ varies.  Suppose $X\to C$ a semistable,  elliptic
surface and the Kodaira fibre type at a point $x_0\in C(k)$ is of type
$I_n$ with $n\geq 3$. Given an irreducible component $v$ of the
singular fibre at $x_0$, the map 
\[ s_v(x)=x+ <x,v>v, \quad x\in NS(X),\] 
defines the Picard-Lefschetz
reflection based at $v$ of $NS(X)$. Let $v_0, \cdots, v_{n-1}$ be the
irreducible components of the singular fibre $p^{-1}(x_0)$. The
reflections $s_{v_i}$ generates the affine Weyl group $W_n(x_0)$ based
on the fibre $x_0$, giving  an action of $W_n(x_0)$ on the
N\'eron-Severi group $NS(X)$ of $X$.

Suppose $b: B\to C$ is a finite, separable map in $\cB_C$,
and let $y_1, \cdots, y_r$ be the points of $B$ lying above $x_0$.
We use the variable $y$ to denote one of the fibres. 
Suppose that the local ramification degree at $y$ is $e_y$. Let 
$w_0^y, \cdots, w_{ne_y-1}^y$ be the irreducible components of the
singular fibre 
$p_b^{-1}(y)$. By the results of Section \ref{sec:repn}, there is a
representation $R_y :W_n(x_0)\to {\rm Aut}(NS(X))$ defined on 
the Picard-Lefschetz reflection based on the 
irreducible component $v_k$ of the fibre at $x_0$ as, 
\[ R_y(s_{v_k})=\prod_{w(i,j)\in I(n,e_y,k)}s_{w^y(i,j)},\] 
where the set  $I(n,e_y, k)$ is the collection of vectors of the form
$w^y(i,j)=\sum_{l=i}^jw_l^y$ of length $e_y$ and support containing
$ke_y$.  Define 
\[PL_b(s_{v_k})=\prod_{y\in b^{-1}(x_0)}R_y(s_{v_k}).\]
By construction, $\theta_b(s_{v_k})$ is an element of ${\rm
  Aut}(NS(X))$. 

In order to prove Theorem \ref{upl},  that  $PL_b(s_{v_k})$ defines a
universal isometry, it needs to be checked its  compatibility with the
base change map $p_a^*: NS(X_b) \to NS(X_{b\circ a})$ for maps
$A\xrightarrow{a} B\xrightarrow{b} C$.  On the fibral divisors this
compatibility is given by Corollary \ref{cor:univisom}. We need to
check it only on sections. 

Suppose $(P)$ is a section of $\pi$ not passing through $v_0$. Then
$s_{v_0}$ fixes $(P)$. The pullback section $p_b^*((P))$ intersects
the fibre over $y$ at one of the components $w_{ke_y}, ~k\neq 0$.
Since the definition of $PL_b(s_{v_0})$ involves onlythe reflections
corresponding to exceptional divisors $w_i, 0\leq |i|<e_y$
$PL_b(s_{v_0})$ fixes $p_b^*((P))$. 

Now lets assume that  $(P)$ be a section of $\pi$ passing through
$v_0$. Let $w_0^{y_i}$ be the identity component at the fibre over
$y_i$ of the pullback divisor $p_b^*(v_0)$. The pullback section
$p_b^*(P)$,  is a section of $X_b\to B$ passing through $w_0^{y_i}$
for $i=1,\cdots, r$.  Using the fact that the reflections appearing in
the definition of $R_y(s_{v_0}) $ are mutually orthogonal we get, 
\[ 
\begin{split} 
 R_y(s_{v_0})(p_b^*(P))&=s_{w^y(-e_y+1,0)}s_{w^y(-e_y+2,1)}\cdots s_{w^y(0,
   e_y-1)}(p_b^*(P))\\
&= p_b^*(P)+{w^y(-e_y+1,0)}+ {w^y(-e_y+2,1)}+\cdots +{w^y(0,e_y-1)}\\
&=p_b^*(P)+e_yw^y_0+\sum_{j=1}^{e-1} 
(e-j)(w^{y}_{-j}+w^{y}_j). 
\end{split}
\]

 Then, 
\begin{align*}
 PL_b(s_{v_0})((P))&=R_{y_1}(s_{v_0})\cdots R_{y_r}(s_{v_0}) ((P))\\
&=p_b^*(P) +\sum_{i=1}^r\sum_{j=1}^{e_i}\left(e_iw_0^{y_i}+\sum_{j=1}^{e_i-1} 
(k-j)(w_{-j}^{y_i}+w_j^{y_i})\right)\\
&= p_b^*((P)+v_0)=p_b^*(s_{v_0}((P)).
\end{align*}
This proves the compatibility of $PL_b(s_{v_0})$ with the pullback
map on sections, thereby showing that it defines a universal
isometry, and finishes the proof of Theorem  \ref{upl}. 

\section{Proof of Theorem \ref{guti}}\label{sec:guti} Let $S$ be the
singular locus of $\pi: X\to C$. For $t\in S$, let the singular fibre
be of Kodaira type $I_{n_t}$.  The space $N(X_t)$, the subspace of
$N(X)$ generated by the components of the fibre of $\pi$  based at
$t$,  equipped with its intersection pairing is isomorphic to the root
lattice of type $\tilde{A}_{n-1}$.  Let $A(N(X_t))$ be the
automorphism group of $N(X)$ generated by the Picard-Lefscetz
transformations based on the irreducible components of the fibre
$X_t$. The group  $A(N(X_t))$ is isomorphic to the affine Weyl group
$W_{n_t}$.

It follows from Theorems \ref{repnaffineweyl} and \ref{upl}, that 
the maps $PL_b$ can be extended multiplicatively to give a representation 
of the product of the affine Weyl groups over $t\in S$ as universal
isometries of the elliptic surface $\cE$:
 \[ PL: \prod_{t\in S} A(N(X_t))\to {\rm Aut}(UNS(X)).\] Let $\Phi\in
{\rm Aut}(UNS(X))$ be an isometry of $UNS(X)$.  By Proposition
\ref{fibre}, after multiplying by the automorphism $-1$ if required,
we can assume that $\Phi(F)=F$.  By Proposition \ref{fibrepreserve},
$\phi:NS(X)\to NS(X)$ restricts to an isometry $N(X_t)\to N(X_t)$ for
$t\in S$. 

The space $N(X_t)$ can be identified with the root lattice of the
affine root system $\tilde{A}_{n-1}$. Let $v_i^t, ~i\in \Z/n_t\Z$ be a
standard basis for $N(X_t)$. We have two bases for this affine root
system: $\{v^t_0, \cdots, v^t_{n_t-1}\}$ and  $\{\phi(v^t_0), \cdots,
\phi(v^t_{n_t-1})\}$.  

By \cite[Proposition 5.9]{Kac}, there exists an element $x_t\in
A(N(X_t))\simeq W_{n_t}$, that maps the basis  $\{\phi(v^t_0), \cdots,
\phi(v^t_{n_t-1})\}$ to the standard basis $\{v^t_0, \cdots,
v^t_{n_t-1}\}$ or to its negative. Since any element of $A(N(X_t))$ is
generated by the Picard-Lefschetz transformations, which preserve the
fibre $F=v^t_0+\cdots +v^t_{n_t-1}$, so does $\phi$. It follows that
$x_t$ takes the basis $\{\phi(v^t_0), \cdots, \phi(v^t_{n_t-1})\}$ to
the standard basis $\{v^t_0,\cdots, v^t_{n_t-1}\}$. 

By Theorems \ref{upl} and \ref{repnaffineweyl}, we can assume that
$x_t$ defines (universal) automorphisms of $UNS(X)$. Define $\Psi\in
{\rm Aut}(UNS(X)$ by
\[ \Psi=\Phi\circ \prod_{t\in S}PL(x_{t}).\] Denote by $\psi$ its
restriction to $NS(X)$. We have, 

\begin{proposition}\label{prop:nesec}
With notation as above, $\psi$ maps sections to sections. 
\end{proposition}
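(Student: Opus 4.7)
The plan is, given a section $(P)$ of $\pi: X \to C$, to pin down $\psi((P))$ as the unique section $(Q)$ representing the Mordell-Weil class of $\psi((P))$ under the Shioda-Tate identification of Proposition \ref{mwgroup}, with no residual trivial-divisor correction. The levers will be the arithmetic data I already control: by the very construction $\psi = \Phi \circ \prod_t PL(x_t)$, the Weyl element $x_t$ was chosen so that $x_t \circ \phi|_{N(X_t)}$ matches the basis $\{\phi(v_j^t)\}$ elementwise to $\{v_j^t\}$, forcing $\psi$ to act as the identity on each sublattice $N(X_t)$ and on the fibre class $F$. This completely prescribes the intersection profile $\psi((P)) \cdot v_j^t = (P) \cdot v_j^t = \delta_{j, i_0(P, t)}$, where $i_0(P, t) \in \Z/n_t\Z$ records the component at $t$ through which $(P)$ passes.

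I will then write $\psi((P)) = (Q) + T$ with $T \in T(X)$ and expand $T = a(O) + bF + \sum_{t, i \neq 0} c_{t, i} v_i^t$ in the integer basis of the trivial lattice. Pairing with $F$ gives $a = 0$. Pairing with each $v_j^t$, together with the fact that $(Q)$ is a section and so meets exactly one fibre component, the equations $T \cdot v_j^t = \delta_{j, i_0(P, t)} - \delta_{j, i_0(Q, t)}$ become, upon adopting the convention $c_{t, 0} := 0$, precisely the discrete Poisson equation $\Delta c_{t, \cdot} = \delta_{i_0(P,t)} - \delta_{i_0(Q,t)}$ on the cycle graph $\Z/n_t\Z$.

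The crux will be the observation that this Poisson equation admits integer solutions only when $i_0(P, t) \equiv i_0(Q, t) \pmod{n_t}$. This can be phrased via the critical (sandpile) group of the $n$-cycle, known to be cyclic of order $n$ with the class of $\delta_p - \delta_q$ equal to $p - q \bmod n$; equivalently, a direct piecewise-linear computation exhibits the unique real solution as having slopes $\pm (i_0(P,t) - i_0(Q,t))/n_t$ on the two arcs, which is integer-valued only if the numerator vanishes modulo $n_t$. Since $T$ has integer coefficients, this forces $i_0(Q, t) = i_0(P, t)$ for every $t \in S$; the remaining homogeneous system with $c_{t, 0} = 0$ then has only the trivial solution, so the fibral part of $T$ vanishes. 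Thus $T = bF$, and the self-intersection identity $\psi((P))^2 = (P)^2 = -\chi(X) = (Q)^2$ together with $((Q) + bF)^2 = -\chi(X) + 2b$ pins down $b = 0$, so $\psi((P)) = (Q)$ is a section as required.

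The main obstacle I anticipate is justifying rigorously that $\psi$ acts as the identity, and not merely as a residual Dynkin diagram automorphism of $\tilde{A}_{n_t - 1}$, on each $N(X_t)$: this depends on whether Kac's transitivity of the affine Weyl group on bases is available elementwise or only as an action on unordered sets of simple roots. In the worst case one has $\psi(v_j^t) = v_{\sigma_t(j)}^t$ for some cyclic rotation or reflection $\sigma_t$, and the same scheme applies after replacing $i_0(P, t)$ by $\sigma_t^{-1}(i_0(P, t))$ throughout; the Poisson/sandpile criterion then forces $i_0(Q, t) = \sigma_t^{-1}(i_0(P, t))$, which is exactly the specialization profile compatible with a geometric translation or involution, so $\psi((P))$ is still a section.
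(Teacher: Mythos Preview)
Your argument is correct and rests on the same core mechanism as the paper's: expand $\psi$ of a section as a section plus a trivial divisor, pair with the fibre components to obtain the discrete Laplace/Poisson equation on each cycle $\Z/n_t\Z$, invoke an integrality obstruction to kill the fibral part, and finish with self-intersection to kill the $F$-coefficient. Your sandpile/critical-group formulation is exactly the paper's explicit computation $-k = n a_{-1}$ dressed in different language.

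One point of comparison is worth recording. Your opening claim that $x_t$ can be chosen so that $\psi$ acts as the \emph{identity} on $N(X_t)$ is not what Kac's Proposition~5.9 gives: the affine Weyl group is transitive on the set of simple systems, so a priori $\psi$ only permutes $\{v_j^t\}$ by some diagram automorphism $\sigma_t$. You correctly flag and patch this in your final paragraph, showing the section property survives for arbitrary $\sigma_t$. The paper takes the opposite tack: it works with the zero section after a translation, assumes $\psi(v_0)=v_k$ with $k\neq 0$, and derives the non-integrality contradiction directly, thereby \emph{proving} $\psi(v_0^t)=v_0^t$ for every $t$ as a byproduct. That extra conclusion is used immediately afterward in the paper (Proposition~\ref{psitoPsi}); your route, while sufficient for the proposition as stated, would need a separate step to recover it. (A small slip: in your patch the Poisson source is $\delta_{\sigma_t(i_0(P,t))}$, not $\delta_{\sigma_t^{-1}(i_0(P,t))}$, but this does not affect the conclusion.)
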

\begin{proof}
The property  of $\psi$ that we require in the proof is that 
$\psi$ preserves the standard basis for each singular fibre of $X\to C$.
In particular, this implies that   
$\psi(F)=F$.
By renaming if required, it is enough to show that the
zero section   $(O)$ is mapped to a section by $\psi$. Write,   
\[ \psi((O))= (P)+V+rF,\]
where $V$ is a fibral divisor.  It is enough to show
that after translation by $(-P)$, $\psi((O))$ is a section. Hence we
can assume that $(P)=(O)$.  We need to show that $V$ and $r$ are
zero. Write $V=\sum_{t\in S}V_t$, where $V_t$ is the contribution 
to $V$ from $N(X_t)$.
We argue fibrewise and first show that each  $V_t$ is zero, upto modifying $r$. 
 
Fix $t$ and for notational ease, we drop the superscript $t$.   Suppose
that $\psi(v_0)=v_k$ for some $k\neq 0$, and $\psi(v_j)=v_0$. 
Write $V=\sum_{i\in \Z/n\Z}a_iv_i$. Modify $r$, such that $a_0=0$.$k=j$.
For $l\neq 0, ~k$, the equation 
\[0=(O).\psi^{-1}(v_l)=\psi((O)).v_l=(O).v_l+V.v_l=-2a_l+a_{l+1}+a_{l-1},\]
yields the equality $a_{l+1}=2a_l-a_{l-1}$.
Going from $0$ to $k$ in the increasing order, we get $a_l=la_1$ for $l\leq k$.
Going from $0=n$ to $k$ in the reverse order, 
we get $a_{-l}=la_{-1}$ for $l\leq n-k$. 
Hence we get $ka_1=a_k=a_{-(n-k)}=(n-k)a_{-1}$.
From the equation,
 \[0=(O).v_j=\psi((O)).\psi(v_j)=(O).v_0+V.v_0=1+a_{1}+a_{-1},\]
 we get $a_1=-(1+a_{-1})$. Combining these two equations gives,
 \[-k(1+a_{-1})= (n-k)a_{-1}, \quad \mbox{i.e.,} \quad  -k=na_{-1}.\]
 Since $0<k<n$, this implies
$a_{-1}$ is non-integral, contradicting the integrality 
of the coefficients $a_j$ of $V$. 

Hence this implies that $k=0$, i.e., $\psi(v_0)=v_0$, and hence
$\psi(v_i)=v_i$ or $v_{-i}$.  In either case, for $i\neq 0$, 
\[0=(O).v_i=\psi((O)).\psi(v_i)=(O).\psi(v_i)+V.\psi(v_i)=V.\psi(v_i).\]
As the space generated by the vectors $v_i$ for  $i\neq 0$ is negative 
definite, this implies $V=0$ and  $\psi((O))= (O)+rF$, for some 
integer $r$. Considering self-intersections, 
\[ -\chi(X)=(O)^2=\psi((O))^2=(O)^2+2r=-\chi(X)+2r,\]
we get that $r=0$ and hence $\psi((O))$ is a section. This proves the
proposition. 
\end{proof}

As a consequence of this proposition,  translating by a section if required,
we can assume that $\psi((O))=(O)$. 

Since $\psi((O))=(O)$, it follows that $\psi(v_0^t)=v_0^t$ for $t\in
S$ (this was proved as part of the proof of the proposition). Hence
for any $t$ and $1\leq i<n_t$,  $\psi(v^t_i)=v^t_i$ or
$v^t_{-i}$. In particular, $\psi$ restricts to an involution
restricted to $N(X_t)$ for each $t\in S$. We would like to extend
these properties to the universal isometry $\Psi$: 

\begin{proposition}\label{psitoPsi}
Let $\cE: X\to C$ be a semistable elliptic surface. Let $S$ be the singular
locus, and assume that the singular fibre at $t\in S$ is of
Kodaira-N\'eron type $I_{n_t}$ with $n_t\geq 3$. Suppose $\Psi$ is an
universal isometry of $UNS(\cE)$ such that  $\psi=\Psi|_{NS(X)}$
satisfies the following property (E): 

(E): For all $t\in S$, $\psi(v^t_i)=v^t_i$ or
$v^t_{-i}$, where $\psi=\Psi|_{NS(X)}$ and $\{v_i^t, ~i\in \Z/n_t\Z\}$
are the irreducible components of the singular fibre at
$t$, and $v_0^t$ is the component meeting the section $(O)$. 

Then for every $b\in \cB_C$, the map $\psi_b$ satisfies Property
$E$. Further,  $\Psi$ is uniquely determined by $\psi$. 
\end{proposition}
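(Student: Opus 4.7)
The plan is to handle the two assertions---preservation of (E) under base change and uniqueness of $\Psi$ given $\psi$---separately, using the defining commutativity $\psi_b \circ p_b^* = p_b^* \circ \psi$ throughout. For concreteness, fix $b \in \cB_C$, $t \in S$, and $y \in b^{-1}(t)$ with ramification degree $e = e_y$; by (E) there is $\epsilon_t \in \{\pm 1\}$ with $\psi(v_i^t) = v_{\epsilon_t i}^t$ for all $i$, and the commutativity combined with Proposition \ref{ssfibrepullback} yields
\[
\psi_b(p_b^*(v_i^t)) = p_b^*(v_{\epsilon_t i}^t) \quad \text{for every } i,
\]
determining $\psi_b$ on the pullback sublattice of $N(X_{b,y})$.

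For the first assertion, I would begin by showing $\psi_b(w_0^y) = w_0^y$: an intersection computation using $\psi(v_0^t) = v_0^t$, the Shioda--Tate decomposition of Proposition \ref{mwgroup}, and the self-intersection $\psi((O))^2 = -\chi(X)$ forces $\psi((O)) = (P)$ for some $P$ in the narrow Mordell--Weil group $E_O(K)$; pulling back, $\psi_b(O_b) = (P)_b$ meets $w_0^y$, and $\psi_b(w_0^y) \cdot \psi_b(O_b) = 1$ combined with Lemma \ref{vij}(3) pins down $\psi_b(w_0^y) = w_0^y$. I would then propagate outward along the cycle of fibre components by induction on cyclic distance from $0$: the image $\psi_b(w_j^y)$ is a square-$(-2)$ class with prescribed intersections against the pullback divisors and the previously determined components, and case analysis based on Lemma \ref{vij}(3) shows that these constraints admit only the two globally consistent choices, identity and the involution $w_j^y \mapsto w_{-j}^y$, yielding $\psi_b(w_j^y) = w_{\epsilon_t j}^y$.

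For uniqueness, given $\Psi_1, \Psi_2$ with common restriction $\psi$, set $\Psi = \Psi_2^{-1} \circ \Psi_1$; then $\Psi|_{NS(X)}$ is the identity, which trivially satisfies (E), so the first assertion applied to $\Psi$ gives $\Psi|_{N(X_{b,y})} = \mathrm{Id}$ for each $y$. Combined with $\Psi(F_b) = F_b$ and $\Psi(O_b) = p_b^*(\Psi((O))) = O_b$, this says $\Psi$ fixes the trivial lattice $T(X_b)$ pointwise for every $b$. The narrow Mordell--Weil argument in the proof of Theorem \ref{idonfibres}---which uses only identity on trivial lattices, universality of $\Psi$, and the finite generation and torsion-freeness of $E_O(K_b)$---then extends this to the Mordell--Weil quotients and concludes $\Psi = \mathrm{Id}$ universally, whence $\Psi_1 = \Psi_2$.

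The main obstacle is the inductive propagation in the first assertion: the pullback compatibility alone does not distinguish $w_j^y$ from $w_{-j}^y$ (both have the same intersections with pullback divisors), nor from longer square-$(-2)$ vectors such as $w_j^y + w_{j+1}^y$, which can exhibit the same pullback intersection pattern as $w_j^y$. The resolution is to use the isometry conditions $\psi_b(w_j^y) \cdot \psi_b(w_{j \pm 1}^y) = 1$ to propagate constraints around the cycle and, via Lemma \ref{vij}(3), eliminate all candidates except $w_j^y$ and $w_{-j}^y$; global consistency then forces the orientation dictated by $\epsilon_t$.
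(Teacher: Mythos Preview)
Your approach to the first assertion is correct and close in spirit to the paper's, but the order of attack differs. The paper does not pin down $\psi_b(w_0^y)$ first; instead it runs a purely combinatorial argument on the whole cycle: writing each $\psi_b(w_k)=w(i_k,j_k)+r_kF$ via Lemma~\ref{vij}(3), the adjacency relations $\psi_b(w_k)\psi_b(w_{k+1})=1$ and $\psi_b(w_k)\psi_b(w_{k+2})=0$ force the segments $[i_k,j_k]$ to be disjoint, abutting, and non-backtracking, hence to partition $\Z/ne\Z$ into $ne$ singletons. Only after establishing $i_k=j_k$ for all $k$ does the paper use a single pullback intersection, $\psi_b(w_0)\cdot p_b^*(v_0)=-2$, to force $\psi_b(w_0)=w_0+r_0F$, and then the zero-section intersection to kill the $r_k$. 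This avoids the inductive case analysis you describe in your ``main obstacle'' paragraph. Your route works too, but note that before invoking Lemma~\ref{vij}(3) you need $\psi_b(w_j^y)\in N(X_{b,y})$, which comes from Propositions~\ref{fibrepreserve} and~\ref{singlocus}; you use this implicitly.

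On uniqueness the two arguments genuinely diverge. The paper's proof only shows that $\psi_b$ on fibral components is determined by the signs $\epsilon_t$ coming from $\psi$, and then asserts that full uniqueness of $\Psi$ is ``clear''---leaving the extension beyond the trivial lattice unexplained. Your reduction to $\Psi=\Psi_2^{-1}\Psi_1$ with $\Psi|_{NS(X)}=\mathrm{Id}$, followed by the narrow Mordell--Weil argument of Theorem~\ref{idonfibres}, actually fills this gap. One caveat: the last line of that argument in the paper uses that sections go to sections, which you omit from your list of ingredients; you can either observe that Proposition~\ref{prop:nesec} applied at each level $b$ (legitimate once the first assertion is in hand) supplies this, or note that it is unnecessary since $T(X_b)$ carries a non-degenerate form, so $\Phi(D)-D\in T(X_b)$ orthogonal to $T(X_b)$ already forces $\Phi(D)=D$.
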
  
\begin{proof}
Fix a point $t_0\in S$ and a point $t^b_0$ of $B$ lying above
$t_0$. Suppose that the Kodaira type of the fibre over $t_0$
(resp. $t^b_0$) is $I_n$ (resp. $I_{ne}$). We can assume $e>1$. 
Denote the irreducible
components of the fibre $X_{t_0}$ by $v_i, ~i \in \Z/n\Z$
and those over $t^b_0$ by $w_i, ~i\in \Z/ne\Z$, where $v_0, ~w_0$ are
the components meeting the zero section. 

For  $k\in  \Z/ne\Z$,  $\psi_b(w_k)^2=w_k^2=-2$. By Part (3) of Lemma
\ref{vij},  $\psi_b(w_k)=w(i_k,j_k)+r_kF$ for some integers $i_k,
~j_k, ~r_k$. Suppose $w(i, j)$ and $w(k,l)$ are two vectors whose
supports intersect. We have, 
\[w(i, j),w(k,l)=\begin{cases} -2 &\text{if $i=k$ and $j=l$}.\\
-1 &\text{if either $i=k$ or $j=l$ and supports not equal},\\
0 &\text{if none of the endpoints are equal}.
\end{cases}
\] Since  $\psi_b(w_k)\psi_b(w_{k+1})=1$, it follows that the supports
of     $w(i_k, j_k)$ and $w(i_{k+1}, j_{k+1})$ do not intersect, and
the union $[i_k, j_k]\cup [i_{k+1}, j_{k+1}]$ forms a connected
segment. If for some $k$, the segment $[i_{k+2}, j_{k+2}]$ intersects
the segment $[i_{k}, j_{k}]$, then at least one of their endpoints
have to coincide. By the above calculation,
$\psi_b(w_k)\psi_b(w_{k+2})$ is either $-2$ or $-1$, contradicting the
fact that it is equal to $w_kw_{k+2}=0$ (as $e>1$, $ne\geq 5$). Hence
the disjoint segments  $[i_{k}, j_{k}]$ as $k$ varies join together to
form a connected segment without any back tracking, and fill up
$\Z/ne\Z$. These conditions force for each $k$, $i_k=j_k$. Now,
\[\psi_b(w_0).\psi_b(p_b^*(v_0))=w_0.p_b^*(v_0)=-2e+2(e-1)=-2.\]  
The support of the pullback divisor $p_b^*(v_0)$ is the set $|j|<e$. The
exceptional divisors $w_j, ~0<|j|<e$ do not intersect
$p_b^*(v_0)$. These conditions force $\psi_b(w_0)=w_0+r_0F$.  It
follows that $\psi_b(w_k)=w_{\pm k}+r_kF$ for $k\in \Z/ne\Z$.
Intersecting with the zero section, we get $r_k=0$ for all $k$. 

Suppose $\psi(v_k)=v_{-k}$ for all $k\in \Z/n\Z$. Since $\Psi$  is a
universal isometry, 
\[p_b^*(v_k)=p_b^*(\psi(v_{-k}))=\psi_b(p_b^*(v_{-k})).\]  
Hence we have,
\[e\psi_b(w_{-ke})+\sum_{i=1}^{e-1}(e-i)(\psi_b(w_{-ke+i})+\psi_b(w_{-ke-i}))
=ew_{ke}+\sum_{i=1}^{e-1}(e-i)(w_{ke+i}+w_{ke-i}).\] 
This forces $\psi_b(w_{ke})=w_{-ke}$ for $k\in \Z/n\Z$.  The hypothesis $n\geq 3$,
together with the fact proved above   forces  $\psi_b(w_{k})=w_{-k}$
for $k\in \Z/ne\Z$.  A similar argument works if we had assumed that
$\psi(v_k)=v_{k}$ for all $k\in \Z/n\Z$, forcing in this case $\psi_b$
to be identity on the fibres above $t_0$.

It is clear that not only have we proved that $\Psi$ is uniquely
determined by $\psi$, but in fact that the behaviour of $\psi_b$ on a
singular fibre  at $s\in B(k)$ is similar to that of $\psi$ on
$b(s)\in C(k)$, in whether it acts as the identity or flips around the
origin according respectively to the behaviour of $\psi$.  
\end{proof}

\begin{proof}[Proof of Theorem \ref{guti}] We are now in a position to
describe the automorphism group of the universal N\'eron-Severi group.
Given an universal isometry $\Phi$, by Proposition \ref{fibre}, we
first multiply  by $-1$ if required to ensure that $\Phi$ fixes the
fibre. By Proposition \ref{singlocus}, the resulting automorphism
restricts to an automorphism of $N(X_t)$ for each point $t\in S$, the
ramification locus of $\pi$. By the argument given before the
statement of Proposition \ref{prop:nesec}, modify $\Phi$ by an element
of the form $PL(x_{t})$ for some element  $x_t\in A(N(X_t))$ to ensure
that the base morphism $\phi$ maps the standard basis of any singular
fibre of $\pi$ to the standard basis.

This ensures, by  Proposition \ref{prop:nesec}, that $\phi$ preserves
sections of $\pi$. Now we modify $\phi$ by a translation to ensure
that the zero section $(O)$ of $\pi$ is fixed. By Proposition
\ref{psitoPsi}, each $\phi_b$ for $b\in \cB_C$ preserves the standard
basis of each fibre. In particular $\Phi$ preserves the irreducible
components of the singular fibres. By  Proposition \ref{prop:nesec},
applied to each $b\in \cB_C$, $\Phi$ maps sections to sections. 

This ensures that the hypothesis of Theorem  \ref{torelli} hold. As a
consequence, $\Phi$ is either induced by the inverse map on the
generic fibre or is the identity map. Hence the automorphism group is
generated by the above transformations.

The transformation sending $x\mapsto -x, ~x\in UNS(X)$ is central in
the automorphism group. Given an universal automorphism $\Phi$ of
$UNS(X)$, it fixes the trivial lattice, and hence gives a compatible
family of automorphisms, as $b$ varies in $\cB_C$, of the Mordell-Weil
groups of the generic fibre $E(l(B))$. Since the Picard-Lefscetz
tranformations act trivially on the Mordell-Weil groups, the kernel of
this homomorphism is the group $PL\left(\prod_{t\in S}
A(N(X_t))\right)$. The group generated by the translations by
sections, the automorphism of the generic fibre and the central $-1$
element, project isomorphically as automorphisms of the Mordell-Weil
lattices. This proves the semi-direct property of the automorphism
group. 
\end{proof}

\begin{acknowledgement}
 We thank R. V.  Gurjar, D. S. Nagaraj and M. Rapoport for useful discussions. We thank J.-L. Colliot-Th\'{e}l\`{e}ne, P. Colmez and 
 B. Kahn for stimulating discussions at the Indo-French conference held at Chennai, 2016. 
The first named author thanks McGill University, Montreal and
Universit\'e de Montreal for
productive stays during the periods 1994-96 and for three months in
1998 when some of these questions first arose. He thanks H. Kisilevsky for
the reference to Zarhin's question (on a trip from Montreal to
Vermont, when we realized we are working on similar questions). The
first named author thanks MPIM, Bonn for providing an excellent hospitality and
working environment during his stay there in May-June 2016, when 
working on this problem. 
\end{acknowledgement}

\end{document}